\newcommand{\g}{\mathfrak{g}} 	
\renewcommand{\sl}{\mathfrak{sl}}	
\newcommand{\Z}{\mathbb{Z}} 	
\newcommand{\N}{\mathbb{N}} 	
\newcommand{\C}{\mathbb{C}} 	
\newcommand{\acf }{\Bbbk } 	
 \newcommand{\Uplus}[1]{U^+[\overline{#1}]} 
 \newcommand{\Uminus}[1]{U^-[\overline{#1}]}
\newtheoremstyle{introTheorems}
  {\topsep}
  {\topsep}
  {\itshape}
  {0pt}
  {\bfseries}
  {}
  { }
  {\thmname{#1}
  \textnormal{\thmnote{#3}.}
  }
\theoremstyle{plain}
\newtheorem{theorem}{Theorem}[section]
\newtheorem{corollary}[theorem]{Corollary}
\newtheorem{lemma}[theorem]{Lemma}
\newtheorem{proposition}[theorem]{Proposition}
\newtheorem{definition}[theorem]{Definition}
\newtheorem{example}[theorem]{Example}
\newtheorem{problem}[theorem]{Problem}
\newtheorem{fact}[theorem]{Fact}
\newtheorem{question}[theorem]{Question}
\newtheorem{remark}[theorem]{Remark}
\newtheorem*{acknowledgment}{Acknowledgment}
\theoremstyle{introTheorems}
\newtheorem{introExample}{Example}
\newcommand{\gr}{\mathrm{gr}}
\newcommand{\Hom}{\mathrm{Hom}}
\newcommand{\Rep}{\mathrm{Rep}}
\newcommand{\Res}{\mathrm{Res}}
\newcommand{\supp}{\mathrm{supp}}
\newcommand{\rank}{\mathrm{rank}}
\newenvironment{narrow}[1][]
 {\list{}{\setkeys{setpar}{left,right}%
 \setkeys{setpar}{#1}%
 \listparindent=\parindent
 \topsep=0pt
 \partopsep=0pt
 \parsep=\parskip}\item\relax\hspace*{\listparindent}\ignorespaces}
 {\endlist}
\newcommand{\hamburger}[4] 
{
 \thispagestyle{empty}
 \vspace*{-2cm}
 \begin{flushright}
 ZMP-HH #2 \\
 Hamburger Beitr\"age zur Mathematik Nr. #3 \\
 #4 \\
 \end{flushright}
 \vspace{0.5cm}
 \begin{center}
 \Large \bf
 #1
 \end{center}
 \vspace{0.5cm}
 \begin{center} 
 Simon Lentner\\
 Universität Hamburg\\
 Simon.Lentner@uni-hamburg.de\\
 ~\\
 Karolina Vocke\\ 

 University of Innsbruck\\
 karolina.vocke@gmx.de

 \end{center}
 \vspace{0.5cm}
}
\begin{document}

\hamburger{On Borel subalgebras of quantum groups}{17-17}{662}{May 2019}

\vspace{.5cm}
\begin{abstract}
For a quantum group, we study those right coideal subalgebras, for which all irreducible representations are
one-dimensional. If a right coideal subalgebra is maximal with this property, then we call it a \emph{Borel subalgebra}. 

Besides the positive part of the quantum group and its reflections, we find new unfamiliar Borel
subalgebras, for example, ones containing copies of the quantum Weyl algebra. Given a Borel subalgebra, we study its induced (Verma-)modules and prove among others that they have all irreducible finite-dimensional modules as quotients. We give two structural conjectures involving the associated graded right coideal subalgebra, which we prove in certain cases. In particular, they predict the shape of all triangular Borel subalgebras.  
As examples, we determine all Borel subalgebras of $U_q(\sl_2)$ and $U_q(\sl_3)$ and discuss the induced
modules.
\end{abstract}

\makeatletter
\@setabstract
\makeatother
\vspace{4cm}
\begin{acknowledgment}
We are very thankful to Istvan Heckenberger for giving very valuable impulses at several stages of this paper. We are also thankful for his hospitality in Marburg, which was made possible by the Humboldt Institut Partnerschaft. We thank Yorck Sommerhäuser for the comments and literature hints in the final stage of this article, and the anonymous referee for thorough work and many helpful suggestions. Lentner received partial support from the RTG 1670. Part
of this research was conducted while Vocke was visiting Nuffield College, Oxford.
\end{acknowledgment}

\newpage

\thispagestyle{empty}
\tableofcontents

\newpage 

\section{Introduction}

Borel subalgebras are essential objects in the structure theory of a semisimple Lie algebra $\g$, and the representations induced from a Borel subalgebra are essential objects in the representation theory of $\g$. The Borel subalgebras in $\g$ are defined to be the maximally solvable subalgebras, and it is known that all Borel subalgebras are conjugate, in particular isomorphic. By Lie's theorem, all finite-dimensional irreducible representations of a solvable subalgebra are $1$-dimensional. \\ 

Following an analogous question for quantum groups by I. Heckenberger, we want to construct and classify those right coideal subalgebras $C$ of $U_q(\g)$, which have representation-theoretic properties similar to solvable and Borel Lie sub\-algebras: We call a right coideal subalgebra \emph{basic} iff all its finite-dimensional irreducible representations are one-dimensional, 
and we call it \emph{Borel} iff it is maximal with this property. 
It may be surprising that for quantum groups there are additional families of such Borel subalgebras, which are not present in $\g$. An example, already for $\sl_2$ and $q$ not a root of unity, is given later in this introduction.\\

The main motivation for studying Borel subalgebras of quantum groups is the more general goal to understand the set of all right coideal subalgebras, which are also called quantum homogeneous spaces. This is considered to be a main problem in the area of quantum groups, 
with considerable progress made in \cite{Let99, KS08, HS09, HK11a, HK11b}. The main examples are the quantum symmetric pairs, which are constructed in close analogy to the Lie algebra case, starting with \cite{NS95,Let97, Kolb14}.
For Lie algebras, Levi's theorem states that any Lie subalgebra of $\g$ decomposes into a solvable 
Lie algebra and a semisimple Lie algebra. While we have no such decomposition result for general coideal subalgebras, it seems promising to start with a classification of the two classes of coideal subalgebras that correspond to solvable and semisimple Lie subalgebras. In the present article and its predecessor \cite{LV17}, we address the first problem, while \cite{Beck16} addresses the second. \\

\enlargethispage{1cm}
Our second motivation is that the definition of a Borel subalgebra $C$ allows us to repeat the standard construction of Verma modules, namely 
inducing a one-dimensional representation of $C$ to a representation of $U_q(\g)$. 
Our new Borel subalgebras lead to new families of quantum group representations, in which typically the Cartan part acts non-diagonalizable, 
but which otherwise share many features of usual Verma modules, for example having the finite-dimensional irreducible modules as quotients. 
For $\sl_2$, such modules seem to have already appeared as operator-theoretic construction \cite{Schm96} and in Liouville theory \cite[Sec. 19]{Tesch01}. On the other hand, Futorny, Cox and collaborators, starting with \cite{Fut94,Cox94}, have studied non-standard Borel subalgebras of affine Lie algebras; it is conceivable that our construction is related to this construction via Kazhdan-Lusztig correspondence. Third, our non-standard Borel subalgebras suggest non-standard free field realizations in logarithmic conformal field theory \cite[Rem. 2.11]{CLR23}. \\

The goal of the present article is to initiate the systematic study of Borel subalgebras of quantum groups. For one, we construct families of examples and give complete classifications for $\g=\sl_2,\sl_3$. Conceptually, we introduce structural tools and conjecture a general formula for triangular Borel subalgebras. We can prove parts of this conjecture and check it in further examples. Moreover, we prove some general properties of the induced modules and construct them in example classes. \\

As a main structural tool for both questions, we propose to study for a general right coideal subalgebra $C$ the associated graded algebra $\gr(C)$, henceforth referred to as the \emph{graded
algebra}, which is again a coideal subalgebra. The coideal subalgebras $C$ in $U_q^{\geq 0}(\g)$ were classified in \cite{HK11b} as character shifts of graded coideal subalgebras. We propose as \hyperref[conj_A]{Conjecture A$\,$} a conjectural formula for the associated graded coideal subalgebra $\gr(C)$, which may be of independent interest. This also implies a formula for $\gr(C)$ for right coideal subalgebras $C$ in $U_q(\g)$ with a triangular decomposition. We prove some statements that give strong evidence for Conjecture A and prove it in many cases, in particular in all cases for type $A_n$.\\

We expect that this graded algebra $\gr(C)$ essentially determines the representation theory of $C$. From these considerations, we give as \hyperref[conj_B]{Conjecture B$\,$} an explicit formula for all Borel subalgebras with a triangular decomposition. We are able to prove one direction of this conjecture i.e. a necessary condition for a triangular coideal subalgebras to be basic. The other direction would be to prove that the maximal choice of parameters indeed produces a basic right coideal subalgebra, which we can so far only do by explicitly constructing $C$ as an algebra, and to prove that there are no larger non-triangular basic right coideal subalgebras, which we can so far only do in small examples such as $\sl_2,\,\sl_3$. For example, our results show that the basic right coideal subalgebras of $U_q(\sl_{n+1})$ we constructed in \cite{LV17} are maximal at least among all triangular basic coideal subalgebras. For $\sl_2,\,\sl_3$ they are indeed Borel subalgebras, and we give a complete classification.\\



\enlargethispage{1cm}
\noindent
We now discuss the content in more detail: \\

Our main object of study is the quantum group $U=U_q(\g)$, where $\g$ is a finite-dimensional semisimple Lie algebra 
and $q\in\acf^\times$ is not a root of unity. $U$ is a Hopf algebra, which is graded by the root lattice $\Lambda\cong \Z^n$ of $\g$ and has a triangular decomposition $U=U^+U^0U^-$ as algebra, where  $U^0=\acf[\Lambda]$. Our base field $\acf$ is an algebraically closed field
of characteristic $0$. \\

In Section 2, we collect preliminaries on the quantum group $U$ and its
representation theory. A central notion in this article is the concept of a \emph{right coideal subalgebra} $C$, which is a subalgebra of $U$ with coproduct $\Delta(C)\subset C\otimes U$. We introduce in Definition~\ref{def_UwInv} for every element $w$ in the Weyl group $W$ of $\g$ a right coideal subalgebra $\Uminus{w} \subset U^-$ with a Poincare-Birkhoff-Witt (PBW) basis of sorted monomials in root vectors $F_\alpha$, where $\alpha$ runs over a certain subset of positive roots $\Phi^+(w)$. This is a variant of the more common subalgebra $U^-[w]$, which by itself is not a right or left coideal subalgebra, and it is obtained by using Lusztig's automorphisms $T_\alpha^{-1}$ instead of $T_\alpha$ in the definition of root vectors $F_\alpha$.

We then review some major results in the classification of right coideal subalgebras, and translate them to our notation and setting:  In \cite{HS09} it is shown, in a slightly different formulation, that all right coideal subalgebras $C\subset U^{\leq 0}$ that are \emph{homogeneous}, which means $C\supset U^0$ and implies that $C$ is $\Lambda$-graded, are of the form $S(U^-[w])U^0$, which is equal to $\Uminus{w}U^0$. In \cite{HK11b} it is shown, in a slightly different formulation, that all right coideal subalgebras $C\subset U^{\leq 0}$ that are $\Lambda$-graded and are \emph{connected}, which means $C\cap U^0=\acf 1$, are of the form  $\Uminus{w}$. Then it is shown that all connected right coideal subalgebras $C\subset U^{\leq 0}$  are \emph{character shifts} $\Uminus{w}_\phi$, and more generally all coideal subalgebras $C\subset U^{\leq 0}$ with $C\cap U^0=\acf[L]$ a Hopf algebra are of the form $\Uminus{w}_\phi \acf[L]$ for suitable abelian subgroups $L\subset \Lambda$. 

Regarding right coideal subalgebras of the full quantum group $C\subset U$, the results in \cite{HK11a} show, again in our language, that every homogeneous right coideal subalgebra $C\subset U$ has a triangular decomposition $\Uminus{w_-}U^0 S(\Uplus{w_+})$ for suitable combinations of Weyl group elements $w_\pm$. The classification of arbitrary right coideal subalgebras~$C$ with $C\cap U^0$ a Hopf algebra  is still a difficult open problem.\\

In this article we mainly work with right coideal subalgebras with a triangular decomposition  $C=C^{\leq 0}C^{\geq 0}$, where $C^{\leq 0}=C\cap U^{\leq 0}$ and $C^{\geq 0}=C\cap U^{\geq 0}$, and we assume in addition that $C\cap U^0$ is a Hopf algebra.  As an immediate consequence of the classification in \cite{HK11b}, formulated as Corollary \ref{cor_triangular}, both parts $C^{\geq 0},C^{\leq 0}$ of such a triangular right coideal subalgebra  are necessarily character shifts, so 
$$C=\Uminus{w_-}_{\phi_-} \acf [L] S(\Uplus{w_+})_{\phi_+}$$
with unknown compatibility conditions for the data $(w_\pm,\phi_\pm,L)$. Note that many previously studied right coideal subalgebras, such as those associated to quantum symmetric pairs, are themselves not triangular but embed into larger triangular right coideal subalgebras. In fact, during our classification in small rank we found no right coideal subalgebra  that is basic but cannot be embedded into a larger triangular basic right coideal subalgebra.\\

In Section 3, we give our definitions of a \emph{basic} right coideal subalgebra and a \emph{Borel subalgebra}. We discuss thoroughly
two examples: First, we classify all Borel subalgebras of $U_q(\sl_2)$. We find the two standard Borel subalgebras 
$U^{\geq 0}$ and $U^{\leq 0}$, which
are already present in the Lie algebra, and we find a new family
of non-homogeneous Borel subalgebras $B_{\lambda,\lambda'}$:\\

\begin{introExample}[\ref{Weyl algebra}]
In $U_q(\sl_2)$ there is a family of non-homogeneous Borel subalgebras
$B_{\lambda,\lambda'}$ with algebra generators
$$ E^\phi:=EK^{-1}+\lambda K^{-1},\;F^\phi:=F+\lambda'K^{-1},\quad\text{for }
 \lambda\lambda'=\frac{q^2}{(1-q^2)(q-q^{-1})}.$$
They are isomorphic to one another for different values of $\lambda$ and $\lambda'$ by a Hopf automorphism
of $U_q(\g)$ rescaling $E,F$ appropriately. As algebra, they are all  isomorphic to the quantum Weyl algebra
$\langle x,y \rangle / (xy-q^2yx-1)$. The family $B_{\lambda,\lambda'}$ is an open orbit under the automorphism group, and its boundaries are the two graded Borel
subalgebras $U^{\geq 0}$ and $U^{\leq 0}$ for $\lambda\to 0$ and $\lambda'\to 0$, and $B_{\lambda,\lambda'}$ can be thought to interpolate between them for $q\neq 1$.\\
\end{introExample}

As a second initial class of examples, we show in Section \ref{sec_standardBorel}, as expected, but not entirely trivial, that the Borel subalgebras that are homogeneous are precisely the standard Borel subalgebras, i.e. $U^{\geq 0}$ and all its reflections $\Uminus{w}\,U^0\, S(\Uplus{w^{-1}w_0})$.\\

In Section 4, we turn our attention to representation theory: One important application of a Borel
subalgebra $\mathfrak{b}$ of a Lie algebra $\g$ is the construction of
Verma modules $V(\lambda)$, which are induced modules $\g\otimes_{\mathfrak{b}} \acf_\chi$ of a
one-dimensional character on a Borel subalgebra $\chi:\mathfrak{b}\to\acf$,  given by a weight $\lambda\in\mathfrak{h}^*$. 

Analogously, we construct an infinite-dimensional $U$-module $V(B,\chi)$ for any Borel subalgebra $B$ in our sense and any one-dimensional character $\chi$. We then prove some general properties,
in particular, that for any given Borel subalgebra $B$ all finite-dimensional
irreducible representations $L(\lambda)$ of $U$ appear as quotients of 
induced modules of one-dimensional character $\chi$ of $B$.

Then we study exemplary induced modules for the unfamiliar Borel
subalgebras $B_{\lambda,\lambda'}$ in $U_q(\sl_2)$. As an algebra, this is a quantum Weyl algebra, and one-dimensional representations are parameterized by complex numbers $e,f$ with $ef=\lambda\lambda'$. The resulting infinite-dimensional $U_q(\sl_2)$-modules are isomorphic to 
the Cartan part $U^0=\acf[K^{\pm 1}]$, where the $U_q(\sl_2)$-action is in such a way that $K$ acts non-diagonalizable by left-multiplication, while the action of $E,F$ is some expression involving a shift. We determine all sub- and quotient modules. We find that for a generic character $\chi$ the induced module is
irreducible, while for a discrete set of characters $\chi$
there is a unique non-trivial quotient, which is isomorphic to
$L(\lambda)$. \\

In Section 5, we study for a right coideal subalgebra $C$ the associated graded algebra $\gr(C)$, which is again a right coideal subalgebra. In particular, for a triangular right coideal subalgebra of the form above, this amounts to knowing the graded algebra $\gr(S(\Uplus{w_+})_{\phi_+})$, which is simply $S(\Uplus{w_+})$ by our choice of direction for the filtration, and the graded algebra  $\gr(\Uminus{w_-})_{\phi_-})$, which is  more involved. The main goal of this section is to compute the graded algebra $\gr(\Uminus{w})_{\phi})$ depending on the data $(w,\phi)=(w_-,\phi_-)$. Up to a localization issue discussed below, this graded right coideal subalgebra has by \cite{HK11b} again the form 
$$\gr(\Uminus{w}_{\phi})=\Uminus{w'}\acf[L]$$
for some unknown shorter Weyl group element $w'$ and some subgroup $L\subset \Lambda$. Motivated by the degree distributions 
and by quite extensive example calculations, we formulate as \hyperref[conj_A]{Conjecture~A} the general formula 
$$w'=(\prod_{\beta\in \supp(\phi)}s_\beta)w
$$
For example, the previously discussed right coideal subalgebra $B_{\lambda,\lambda'}$ in $U_q(\sl_2)$ has the graded algebra predicted by Conjecture A, with $w=s_1$ and $w'=1$:
$$\gr(B_{\lambda,\lambda'})=\langle K^{-1},  S(E)\rangle
= \Uminus{1}\C[K^{-1}]S(\Uplus{s_1})$$

The remainder of this section is devoted to deriving different criteria that imply Conjecture A and in particular to prove it in type $A_n$ and also in many cases of type $B_n,C_n,D_n$. We now discuss the structure of this section more thoroughly:
\begin{itemize}
 \item In Section 5.1, we formulate Conjecture A and list the consequences for arbitrary coideal subalgebras. 
 We illustrate it by treating the example $A_2$. 
 \item In Section 5.2, we discuss localization: A technical complication is that in general $\gr(C)\cap U^0$ is not a Hopf algebra, just the span of a semigroup $G$ in the root lattice, so \cite{HK11a} does not apply. This is why we formulate our conjecture with respect to the localization $\acf[\tilde{G}]\gr(C)$, where $\tilde{G}$ is the group of fractions of $G$, which still captures for us the essential information.
 \item In Section 5.3, we compare the growth, as a filtered vector space, of $\Uminus{w}$ to $\Uminus{w'}$ to prove the formula $\ell(w)=\rank(\tilde{G})+\ell(w')$. In essence, the formula holds because a suitable notion of growth does not change under going to the localization and to the graded algebra. It would be desirable to obtain this result by comparing Gelfand-Kirillov dimensions (which have similar nice 
 properties), but there are technical issues, and on the other hand  explicit PBW-bases are available, so we use Hilbert series and rely only on the structure as filtered vector spaces, not the algebra structure. 
 \item In Section 5.4, an inductive proof for Conjecture A is set up. Suppose that for some Weyl group element $u$ with $C_u=\Uminus{u}_{\varphi}$ we know that the Weyl group element $u'$ that determines the localization of the graded coideal subalgebra $\gr(C_u)$ is given according to Conjecture A. Then for a longer Weyl group element $w=us_i$  and a right coideal subalgebra $C=\Uminus{w}_\phi$, we find that the Weyl group element $w'$ determining $\gr(C)$ is either of the form $w'=u's_j$ for some $\alpha_j$ or of the form $w'=u'$ depending on the support of the character $\phi$ extending $\varphi$. In the second case Conjecture A holds for $w$ as well, and in the first case Conjecture A holds for $w$ iff $\alpha_i=\alpha_j$.
 \item In Section 5.5, we give two criteria that severely narrow the situations in which this induction step can fail, i.e. $\alpha_i\neq \alpha_j$ contrary to Conjecture~A. Criterion 1 proves that such an alternative $\alpha_j$ can only appear in a narrow list of possibilities, up to reflection, which means specific Weyl group elements for parabolic subsystems of type $A_3,B_3,C_3,D_4$. Criterion 2 proves that  such an alternative $\alpha_j$ can only appear if $u$ has a unique ending for its regular expressions resp. if $\alpha_i\in\Phi^+(u)$ is the only simple root. 
 \item In Section 5.6 we show that these criteria already suffice to prove Conjecture~A in type $A_n$ because in this root system, all roots are multiplicity-free.
 \item In Section 5.7 we conclude with some remarks on the remaining critical cases in type $B_n,C_n,D_n$, where again most cases follow from the criteria above, but a few hard cases remain. 
 \item In Section 5.8 we present an alternative approach, namely the explicit computation of the character shift of a reflection $T_{s_m}^{-1}$, provided $\alpha_m$ appears in the root in question with multiplicity zero.  This calculation uses a slight generalization of \cite{Jan96} 8.14 (6) from the case of a simple root to a root with multiplicity zero. It can be used to compute the graded algebra in explicit cases by hand.
\end{itemize}
Altogether we feel at this point that a more conceptual approach to prove Conjecture A would be much more desirable.\\

In Section 6 we start to apply Conjecture A to control
representation-theoretic properties of a triangular right coideal subalgebra
$C$. We formulate as Conjecture~B a complete description of
the data $(w_\pm,\phi_\pm,L)$ that lead to triangular Borel subalgebras of
$U$, namely precisely those for which $\gr(C)$ localizes to a Borel subalgebra. 
We are only able to prove one direction of our Conjecture B, in cases where Conjecture A holds. Our method to prove that a right coideal subalgebra is not basic is to
find higher-dimensional irreducible $C$-representations as composition
factors in the restriction of a suitable $U$-representation $L(\lambda)$.\\

In Section 7 we thoroughly treat the case $U_q(\sl_3)$, and we give a complete classification of all Borel subalgebras.
We find three types of Borel subalgebras, which can all be given
in terms of generators and relations: 
\begin{itemize}
 \item The standard Borel subalgebras.
 \item Borel subalgebras, that come from the Borel subalgebras
$B_{\lambda,\lambda'}$ in $U_q(\sl_2)$, together with a remaining
standard Borel subalgebra. This is the smallest example of a family we constructed in \cite[Sec. 3]{LV17}.
 \item Another family of Borel subalgebras, that consists of an
 extension of a quantum Weyl algebra by another quantum Weyl algebra.
\end{itemize}
In all these cases we also determine the induced representations.

\section{Preliminaries}

\newcommand{\lightning}{\times}
\newtheorem*{conjectureA}{Conjecture A$\hspace{-.15cm}$}
\newtheorem*{conjectureB}{Conjecture B$\hspace{-.15cm}$}

\subsection{Quantum groups}

Throughout this article $\g$ is a finite-dimensional semisimple Lie algebra of rank $n$. 
Associated to this datum is a root system. 

We denote a fixed set of positive
simple roots by $\Pi=\{\alpha_1,\ldots \alpha_n\}$ and the corresponding
set of all positive
roots by $\Phi^+$. The simple roots are a basis of the root lattice $\Lambda=\Z^n$ with
bilinear form $(-,-)$. In concrete examples, we often use the short hand notation $\alpha_{1^{k_1}\ldots n^{k_n}}:=k_1\alpha_{1}+\cdots + k_n\alpha_n$, for example $\alpha_{112}=\alpha_{1^2 2^1}=2\alpha_1+\alpha_2$. We denote by 
$c_{ij}=2\frac{(\alpha_i,\alpha_j)}{(\alpha_i,\alpha_i)}$ the Cartan matrix and by $W$ the
finite Weyl group acting on the root lattice $\Lambda$, generated by the
simple reflections $s_i$ on the hyperplanes $\alpha_i^\perp$. In concrete examples we sometimes denote by $s_\beta$ the reflection on $\beta^\perp$ for an arbitrary root $\beta$, so in particular $s_{\alpha_i}=s_i$.\\

Let $\acf $ be an algebraically closed field of characteristic $0$, and
let $q\in \acf ^\times$
be not a root of unity. We consider the \emph{quantum group} $U=U_q(\g)$, and for the following standard facts, we refer the reader e.g. to \cite{Jan96}. 
The algebra $U$ is a deformation of the universal enveloping algebra of the
Lie algebra $U(\g)$. It is generated by elements $E_\alpha,F_\alpha$
and $K_\alpha^{\pm 1}$ with $\alpha \in \Pi$. It is graded by $\Lambda$, with $E_\alpha,K_\alpha^{\pm 1},F_\alpha$ in degrees $\alpha,0,-\alpha$, and we denote the homogeneous components by $U_\lambda$ for $\lambda\in \Lambda$. At the same time, this grading describes the adjoint action of $U^0$, in that $K_\alpha$ acts on $U_\lambda$ by~$q^{(\alpha,\lambda)}$.  We
write $[X,Y]_{a }=XY-a YX$ for any $a \in \acf$ and
$X,Y\in U$.

There is a \emph{triangular decomposition} $U=U^+U^0U^-$ into subalgebras $U^+,U^0,U^-$ generated by the $E_\alpha,K_\alpha^{\pm 1}, F_\alpha$ respectively, and we have Hopf subalgebras 
$U^{\geq0}=U^0U^+$ and $U^{\leq0}=U^0U^-$ 
 corresponding to Borel subalgebras of $\g$. The \emph{Cartan part} ${U^0=\C[K_{\alpha_1}^{\pm1},\ldots,K_{\alpha_n}^{\pm1}]}$ is the group ring of the root lattice $\Lambda$.

The algebra $U$ can be endowed with the structure of a Hopf algebra by
\begin{align*}
\Delta(E_\alpha)&=E_\alpha\otimes 1 + K_\alpha \otimes E_\alpha 
&S(E_\alpha)&=-K_\alpha^{-1}E_\alpha\\
\Delta(F_\alpha)&=F_\alpha\otimes K_\alpha^{-1} + 1 \otimes F_\alpha 
&S(F_\alpha)&=-F_\alpha K_\alpha\\
\Delta(K_\alpha)&=K_\alpha\otimes K_\alpha
&S(K_\alpha)&=K_\alpha^{-1}
\end{align*}
This endows the category of
representations $\mathrm{Rep}(U)$ of the quantum group with a tensor product $\otimes_\acf$, which also admits a braiding. As abelian category, $\mathrm{Rep}(U)$ is semisimple and equivalent to $\Rep(\g)$, for $q$ not a root of unity.\\

The following material is covered in \cite[Chp. 8]{Jan96} --- for very different and more systematic proofs for many statements in the context of Nichols algebras see \cite{HS09}.  For every simple root $\alpha_i\in \Pi$ there is an algebra automorphism $T_{\alpha_i}=T_{s_i}$ on $U$ due to Lusztig, which acts on the root lattice $\Lambda$ as simple reflection, concretely $T_{s_i}K_\lambda=K_{s_i\lambda}$. It is a nontrivial result that the maps $T_{s_1},\ldots,T_{s_n}$ obey the braid relations of the Artin braid group associated to the Weyl group $W$ of $\g$. In particular, for $w\in W$ and a reduced expression $w=s_{i_1}\cdots s_{i_t}$ the definition $T_w=T_{s_{i_1}}\cdots T_{s_{i_t}}$ is independent of the choice of a reduced expression for $w$. 
\begin{definition}\label{def_PhiUw}
For a reduced expression $w=s_{i_1}\cdots s_{i_t}$ we define a subset of positive roots $\beta_k$ for $1\leq k\leq t$ by
\begin{align*}
\beta_k&=s_{i_1}\cdots s_{i_{k-1}}\alpha_{i_k} \\
\Phi^+(w)&=\{\beta_1,\beta_2,\ldots,\beta_t\}\\
&=\big\{\beta\in \Phi^+\mid w^{-1}(\beta) <0\big\}
\end{align*}
The second characterization shows that $\Phi^+(w)$ does not depend on the choice of a reduced expression for $w$, however the enumeration $\beta_1,\ldots,\beta_t$ does depend on this choice, for example $\beta_1=\alpha_{i_1}$. This assignment is compatible with the Duflo order 
\begin{align*}
v<w\quad\Leftrightarrow\quad\Phi^+(v)\subset\Phi^+(w)
\end{align*}
We define corresponding \emph{root vectors} in $U^+$
\begin{align*}
E_{\beta_k}&=T_{s_{i_1}}\cdots T_{s_{i_{k-1}}}E_{\alpha_k} \\
U^+[w]&= \mathrm{span}\left\{ E_{\beta_1}^{a_1} \cdots E_{\beta_t}^{a_t} \mid a_1,\ldots,a_t\in\N_0 \right\}
\end{align*}
Again, it can be shown that the space $U^+[w]$ does not depend on the choice of a reduced expression, however the root vectors do depend on the choice, not only their enumeration. Taking the monomials in descending order $E_{\beta_t}^{a_t} \cdots E_{\beta_1}^{a_1}$ is known to span the same space.
\end{definition}
In particular, choosing a reduced expression of the longest element $w_0$ in $W$ gives a PBW-basis of $U^+=U^+[w_0]$. To treat $U^-$,  we use the algebra anti-coalgebra involution $\omega$ defined by 
$$\omega(E_\alpha)=F_\alpha,\;\omega(F_\alpha)=E_\alpha,\;\omega(K_\alpha)=K_\alpha^{-1}$$ for $\alpha\in \Pi$ in \cite[Sec. 4.6]{Jan96}. We can thus define root vectors for $U^-$ as images $\omega(E_{\beta_k})=T_{s_{i_1}}^\omega \cdots T_{s_{i_{k-1}}}^\omega F_{\alpha_k}$, where $T_{s_i}^\omega=\omega T_{s_i} \omega^{-1}$. By \cite[8.14 (9)]{Jan96} $T_{s_i}^\omega$ is proportional to $T_{s_i}$ on any $U_\lambda$, with a scalar factor depending on $\lambda$, so without changing the vector space spanned by them, we may ignore this factor and define:
\begin{align*}
F_{\beta_k}&=T_{s_{i_1}} \cdots T_{s_{i_{k-1}}} F_{\alpha_k} \\
U^-[w]&= \mathrm{span}\left\{ F_{\beta_1}^{a_1} \cdots F_{\beta_t}^{a_t} \mid a_1,\ldots,a_t\in\N_0 \right\}
\end{align*}
As we shall see next, these definitions are not ideal for the study of coideal subalgebras, better suited are  the following variants that use $T_{s_i}^{-1}$ instead of $T_{s_i}$, following \cite[Rem. 2.13]{HK11b}:
\begin{definition}\label{def_UwInv}
In this article we will {\bf exclusively} work with the following variant
\begin{align*}
E_{\beta_k}&=T_{s_{i_1}}^{-1}\cdots T_{s_{i_{k-1}}}^{-1}E_{\alpha_k} \\
\Uplus{w}&= \mathrm{span}\left\{ E_{\beta_1}^{a_1} \cdots E_{\beta_t}^{a_t} \mid a_1,\ldots,a_t\in\N_0 \right\}\\
F_{\beta_k}&=T_{s_{i_1}}^{-1} \cdots T_{s_{i_{k-1}}}^{-1} F_{\alpha_k} \\
\Uminus{w}&= \mathrm{span}\left\{ F_{\beta_1}^{a_1} \cdots F_{\beta_t}^{a_t} \mid a_1,\ldots,a_t\in\N_0 \right\}
\end{align*}
\end{definition}
For example, $U^-[s_1s_2]$ in $U_q(\sl_3)$ is generated by  $F_{\alpha_1}$ and $F_{\alpha_{1}+\alpha_{2}}=T_{s_1}F_{\alpha_2}$ $=F_{\alpha_2}F_{\alpha_1}-q F_{\alpha_1}F_{\alpha_2}$, which is proportional to $F_{\alpha_1}F_{\alpha_2}-q^{-1}F_{\alpha_2}F_{\alpha_1}$, on the other hand  
$\Uminus{s_1s_2}$ is generated by $F_{\alpha_1}$ and $F_{\alpha_{1}+\alpha_{2}}=T_{s_1}^{-1}F_{\alpha_2}=F_{\alpha_1}F_{\alpha_2}-q F_{\alpha_2}F_{\alpha_1}$ 
Since the definition of root vectors already depends on the choice of a reduced expression, we find it acceptable to introduce no new notation for this variant of root vectors. \\

The main objects of interest in this article are the following: 
\begin{definition}
A subalgebra $C$ of a Hopf algebra $H$ is called \emph{right coideal
subalgebra} if $\Delta(C)\subset C\otimes H$. Similarly, a \emph{left coideal subalgebra} fulfills $\Delta(C)\subset H\otimes C$. 
\end{definition}

In order to define right coideal subalgebras explicitly by algebra generators, the following elementary observation is useful: 
\begin{fact}
Let $V$ be a right coideal in $H$, that is, a vector space with $\Delta(V)\subset V\otimes H$, then the subalgebra generated by $V$ is a right coideal subalgebra $C=\langle V \rangle$.
\end{fact}

We now turn our attention to right coideal subalgebras of $U=U_q(\g)$:

\begin{definition}
We call a right coideal subalgebra $C\subset U$ \emph{homogeneous} if $U^0\subset
C$. In particular, $C$ is then homogeneous with respect to the $\Lambda$-grading.
\end{definition}
\begin{example}
Any Hopf subalgebra is in particular a right coideal subalgebra. For example
$U^{\geq 0}$ and $U^{\leq 0}$ are homogeneous right coideal subalgebras. 

The subalgebras $U^-$ and $S(U^+)$
are right coideal subalgebras. On the other hand,  $U^+$ and $S(U^-)$ are not right coideal subalgebras, but  left coideal subalgebra.
\end{example}

A first classification result for right coideal subalgebras of $U$ is the following; see references in the cited source for earlier works on this question: 

\begin{theorem}[\cite{HS09} Theorem 7.3]\label{HS09} For every $w\in W$ there is
	a right coideal subalgebra $U^+[w]U^0=U^0U^+[w]$. Conversely, every homogeneous right coideal subalgebra $C\subset U^{\geq 0}$ is of this form for some $w\in W$. 
\end{theorem}

Next, we consider right coideal subalgebras $C$ of $U^{\leq 0},\,U^{\geq 0}$ that are not necessarily homogeneous in the sense $C\supset U^0$, but still homogeneous with respect to the $\Lambda$-grading, which we will henceforth call in short $\Lambda$-graded. This is for example true for the right coideal subalgebras $U^-,\,S(U^+)$. Here we find that the $\Uminus{w},\Uplus{w}$ in Definition \ref{def_UwInv} are somewhat better behaved then the $U^-[w],U^+[w]$, which are themselves neither right nor left coideal subalgebras. For example, as discussed above, $\Uminus{s_1s_2}$ is generated by $F_{\alpha_1}$ and $F_{\alpha_1+\alpha_2}=T_{s_1}^{-1}F_{\alpha_2}=F_{\alpha_1}F_{\alpha_2}-q F_{\alpha_2}F_{\alpha_1}$ and a quick calculation of their coproducts shows we get a right coideal subalgebra
\begin{align*}\Delta(F_{\alpha_1})
&=1\otimes F_{\alpha_1}
+F_{\alpha_1}\otimes K_{\alpha_1}^{-1}\\
\Delta(F_{\alpha_1+\alpha_2})
&=1\otimes F_{\alpha_1+\alpha_2}
+F_{\alpha_1}\otimes (q^{-1}-q)F_{\alpha_2}K_{\alpha_1}^{-1}
+F_{\alpha_1+\alpha_2}\otimes K_{\alpha_1+\alpha_2}^{-1}
\end{align*}

\begin{theorem}[\cite{HK11b} Theorem 2.12]\label{HK11b_nonshifted}
    The subalgebra $\Uminus{w}$ is a right coideal subalgebra of $U^-$ and $\Uplus{w}$ is a left coideal subalgebra $U^+$. Accordingly, the subalgebra $S(\Uplus{w})$ is a right coideal subalgebra of $S(U^+)\subset U^{\geq 0}$ and $S(\Uminus{w})$ is a left coideal subalgebra of $S(U^-)\subset U^{\leq 0}$. 
    
    Conversely, all right resp. left coideal subalgebras of  $U^{\geq 0}$ resp. $U^{\leq 0}$ that are $\Lambda$-graded and connected, meaning $C\cap U^0=1\acf$, are of this form.
\end{theorem}
\begin{proof}
The result in \cite[Thm. 2.12]{HK11b} is stated somewhat differently, and we now discuss how our statement above is equivalent: First, note that being homogeneous with respect to the $\Lambda$-grading is equivalent to the assumption of $\mathrm{ad}(U^0)$-stability in the cited source, for $q$ not a root of unity. Then, the cited source relies on an explicit algebra isomorphism $\psi:U^+\to S(U^+)$, and in  \cite[Lm. 2.11]{HK11b} it is proven that $\psi(U^+[w])=S(U^+)\cap U^+[w]U^0$, which is an  intersection of right coideal subalgebras. Applying the anti-algebra and anti-coalgebra map $S^{-1}$ equivalently describes a left coideal subalgebra  $S^{-1}\psi(U^+[w])=U^+\cap S^{-1}(U^+[w])U^0$ of $U^+$. Now $S^{-1}\psi$ is an anti-algebra isomorphism of $U^+$, which acts on generators by $E_{\alpha_i}\mapsto -E_{\alpha_i}$ and thus agrees, up to the sign depending on the $\Lambda$-degree, with the anti-algebra isomorphism $\tau:U^+\to U^+$ in \cite[Sec. 4.6]{Jan96}, which is on the generators $E_{\alpha_i}$ the identity. Since $T_{s_i}^{-1}=\tau T_{s_i} \tau^{-1}$ by  \cite[ 8.14 (10)]{Jan96}, this proves 
$$\Uplus{{w}}=S^{-1}\psi(U^+[w])=U^+\cap S^{-1}(U^+[w])U^0$$
and it is a left coideal subalgebra of $U^+$. Comparing with the result \ref{HS09}, we have in particular 
$$S(\Uplus{{w}})U^0=U^+[w]U^0$$
The other three versions of the assertion follow easily by applying $\omega$ and $S$.
\end{proof}

Next, we consider right coideal subalgebras in $U^{\geq 0}$ that are not necessarily $\Lambda$-graded anymore, and again we slightly rewrite the result in our notation:

\begin{theorem}[\cite{HK11b} Theorem 2.17]\label{HK11b_shifted}
For $w\in W$, let $\phi:\,\Uminus{w}\to \acf $ be a one-dimensional character and define 
$$\supp(\phi):=\{\beta\in \Lambda\mid \exists x_\beta\in \Uminus{w}_\beta
\text{ with }\phi(x_\beta)\neq 0\},$$
which consists of mutually orthogonal roots. Then there is a connected right coideal subalgebra called \emph{character shift}
$$\Uminus{w}_{\phi}:=\left\{\phi(x^{(1)})x^{(2)}\mid x\in \Uminus{w}\right\}$$
More generally, for every subgroup $L\subset \supp(\phi)^{\perp}$ there is a right coideal subalgebra $C=\Uminus{w}_\phi \acf[L]$ with $C\cap U^0=\acf[L]$. 
Conversely, every right coideal subalgebra $C\subset U^{\leq 0}$ with $C\cap U^0$ a Hopf algebra is of this form for some datum $(w,\phi,L)$. 
\end{theorem}
 Similarly, for every one-dimensional character $\phi:\,S(\Uplus{w})\to \acf$ there is a connected right coideal subalgebra 
$$S(\Uplus{w})_{\phi}:=\left\{\phi(x^{(1)})x^{(2)}\mid x\in S(\Uplus{w})\right\}$$

Next we discuss right coideal subalgebras of the full quantum group $U$. We again start with the case where $C$ is homogeneous, meaning $C\supset U^0$: 

\begin{theorem}[\cite{HK11a} Corollary 3.9]\label{HK11a}
Every homogeneous right coideal subalgebra $C$ of $U$ is of the form 
$$C=S(U^-[w_-])\, U^0\,U^+[w_+]$$
for a certain subset of pairs $w_-,w_+\in W$. 
\end{theorem}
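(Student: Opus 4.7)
The plan is to reduce to Theorem~\ref{HS09} by showing that any homogeneous right coideal subalgebra $C\subset U_q(\g)$ admits a factorization $C = C^{\geq 0} \cdot C^{-}$ where $C^{\geq 0} := C \cap U^{\geq 0}$ and $C^{-} := C \cap U^-$. Since $U^0\subset C$, the adjoint action of the $K_\alpha$ equips $C$ with a $\Lambda$-grading compatible with the triangular decomposition, so one can sensibly talk about the positive and negative homogeneous components of any $c\in C$. The first step is to define $C^{\geq 0}$ and $C^{\leq 0}:=C\cap U^{\leq 0}$ and observe both are unital subalgebras containing $U^0$.

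Next I would show that each of $C^{\geq 0}$ and $C^{\leq 0}$ is itself a right coideal subalgebra of the corresponding Hopf subalgebra. This uses that $U^{\geq 0}$ and $U^{\leq 0}$ are Hopf subalgebras with $\Delta(U^{\geq 0})\subset U^{\geq 0}\otimes U^{\geq 0}$, together with the PBW projection $\pi^{\geq 0}\colon U \twoheadrightarrow U^{\geq 0}$ killing monomials with nontrivial $F$-content. Applying $(\pi^{\geq 0}\otimes\id)$ to $\Delta(C)\subset C\otimes U$ and tracking the $\Lambda$-grading forces $\Delta(C^{\geq 0})\subset C^{\geq 0}\otimes U^{\geq 0}$. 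Then Theorem~\ref{HS09} gives $C^{\geq 0}=U^+[w]U^0$ for some $w\in W$, and its mirror (via the Chevalley antiautomorphism exchanging $E_\alpha\leftrightarrow F_\alpha$, which maps right coideals to left coideals and back up to antipode) gives $C^{\leq 0}=U^0 U^-[v]$ for some $v\in W$.

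The main obstacle will be proving $C = U^+[w]U^0U^-[v]$. The inclusion $\supseteq$ is immediate from $C^{\geq 0},C^{\leq 0}\subset C$. For the reverse, take $c\in C$ and expand it in the triangular PBW basis as $c=\sum_{I,J} a_I b_{IJ} \bar a_J$ with $a_I$ a sorted monomial in the $E_\beta$, $\bar a_J$ a sorted monomial in the $F_\gamma$, and $b_{IJ}\in U^0$. I would induct on the highest $F$-degree appearing: the leading term of $\Delta(c)$ along the $F$-direction, using $\Delta(F_\alpha)=F_\alpha\otimes 1+K_\alpha^{-1}\otimes F_\alpha$, forces a specific element of $C\cap U^-\cdot U^0$ built from the top-degree $\bar a_J$, which after right-multiplication by a suitable element of $U^0$ must lie in $C^-$. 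Subtracting an element of $C^{\geq 0}\cdot C^-$ strictly lowers the $F$-degree, closing the induction. The clause ``for a certain subset of pairs $v,w$'' reflects that the product $U^+[w]U^0U^-[v]$ is only closed under multiplication for compatible $(v,w)$, a compatibility condition that ultimately comes from the quantum commutators $[E_{\beta_i},F_{\gamma_j}]$ and whose explicit form is not claimed here.
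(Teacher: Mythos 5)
This theorem is quoted from \cite{HK11a} (Theorem 3.8) and the present paper does not reprove it, so there is no internal proof to compare against; I can only assess your proposal on its own terms and against the strategy in the cited source.

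Your overall route --- split $C$ into $C^{\geq 0}=C\cap U^{\geq 0}$ and $C^{\leq 0}=C\cap U^{\leq 0}$, recognize each as a homogeneous right coideal subalgebra of the corresponding Hopf subalgebra, feed each into Theorem~\ref{HS09}, and then argue $C=C^{\geq 0}C^{\leq 0}$ --- is exactly the strategy used in \cite{HK11a}, where the triangular decomposition is the content of Proposition~2.1 there (and this is the proposition the present paper itself invokes in Section~\ref{sec_standardBorel} when it writes $B=U^-[v]U^0S(U^+[w])$). For the easy half, note you do not need the PBW projection $\pi^{\geq 0}$ at all: since $U^{\geq 0}$ is a Hopf subalgebra, for $c\in C\cap U^{\geq 0}$ one simply has $\Delta(c)\in(C\otimes U)\cap(U^{\geq 0}\otimes U^{\geq 0})=C^{\geq 0}\otimes U^{\geq 0}$, which is cleaner and avoids questions about $\pi^{\geq 0}$ not being a coalgebra map.

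The genuine gap is in your final paragraph: the identity $C=C^{\geq 0}C^{\leq 0}$ is the hard part, and the induction on $F$-degree is only gestured at. You claim that the leading $F$-term of $\Delta(c)$ ``forces a specific element of $C\cap U^- U^0$'' that you can subtract off, but you never exhibit a linear functional or projection applied to the second (unconstrained) tensor leg that actually produces such an element of $C$ from the first leg. This is precisely the step that requires real work in \cite{HK11a}: one has to control simultaneously the $E$-, $K^{\pm 1}$-, and $F$-content of both legs, and the subtraction must be shown to land back in $C$ rather than merely in $U$. Without that, the induction does not close. A secondary but concrete error: you take $\Delta(F_\alpha)=F_\alpha\otimes 1+K_\alpha^{-1}\otimes F_\alpha$, which is the opposite of the convention the paper uses. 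The paper's convention (matching Jantzen and visible from the character-shift $\bar F=F+\lambda' K^{-1}=(\phi\otimes\mathrm{id})\Delta(F)$) is $\Delta(F_\alpha)=F_\alpha\otimes K_\alpha^{-1}+1\otimes F_\alpha$; with your formula $U^-$ would fail to be a right coideal subalgebra, contradicting the paper's own example, and your ``mirror via the Chevalley antiautomorphism'' remark would need to be re-examined under the correct convention.
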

Note that while this formulation of this statement matches Theorem \ref{HS09}, the same result can again be formulated with the $\Uminus{w_+},\Uplus{w_-}$, which has the advantage of presenting $C$ as a product of right coideal subalgebras:
$$C=\Uminus{w_-}\, U^0 \, S(\Uplus{w_+})$$

An interesting and probably very hard question is the classification of arbitrary right coideal subalgebras $C$ of $U$, and one may again assume that $C\cap U^0$ is a Hopf subalgebra, i.e. spanned by a subgroup of $\Lambda$ instead of just a subsemigroup.\\

Note that the previous results, in the form we presented them, give immediately a necessary form of a particular class of right coideal subalgebras:

\begin{definition}\label{def_triangular}
We call a right coideal subalgebra $C$ of $U$ \emph{triangular}, if
$$C=C^{\leq 0}C^{\geq 0},\qquad
C^{\leq 0}=C\cap U^{\leq 0},\qquad \text{where }
C^{\geq 0}=C\cap U^{\geq 0}
$$
We further denote $C^0=C\cap U^0$.
\end{definition} 

\begin{corollary}\label{cor_triangular}
Let $C$ be a triangular right coideal subalgebra in $U$ and assume that $C^0$ is a Hopf algebra. Then it is necessarily of the form 
$$C=\Uminus{w_-}_{\phi_-} \, \acf[L] \, S(\Uplus{w_+})_{\phi_+}$$
for Weyl group elements $w_-,w_+\in W$ and one-dimensional characters $\phi_-,\phi_+$ and a subgroup  $L\subset \supp(\phi_-)^{\perp}\cap \supp(\phi_+)^{\perp}$. 
\end{corollary}
\begin{question}
For which data $(w_-,w_+,\phi_-,\phi_+,L)$ is this product of right coideal subalgebras, which is a right coideal by definition, also multiplicatively closed and hence indeed defines a coideal subalgebra?
\end{question}

\subsection{Quantum group representation theory}

A main idea for Lie algebra and quantum group representations is to take a
large commutative subalgebra $\mathfrak{h}\subset \mathfrak{g}$ (Cartan part)
and simultaneously diagonalize its action. The possible eigenvalues
$\mathfrak{h}\to \acf $ are called weights.

\begin{definition}[Verma module]\label{defverma}
Every one-dimensional character $\chi$ of
$U^0=\acf [\Lambda]$ can be
extended trivially to $U^+U^0$. Consider the induced $U$-representation 
$$V(\chi)=U \otimes_{U^+U^0} \acf _\chi \cong U^-v_\lambda$$
generated by a \emph{highest weight vector} $v_\lambda:=1\otimes 1_\chi$ with 
 $$E_{\alpha}v_{\lambda}=0\text{ and }K_{\alpha}v_{\lambda}=\chi(K_\alpha)v_{\lambda}\text{ for all }\alpha\in\Pi.$$
The module $V(\chi)$ has a unique irreducible quotient module $L(\chi)$.\\

In particular, for a weight $\lambda$ of $\g$, the Verma module $V(\lambda)$ of type $+1$ is the induced representation associated to the character $\chi(K_\alpha)=q^{(\lambda,\alpha)}$. If $\lambda$ is an \emph{integral dominant weight} then $L(\lambda)$ is finite-dimensional. Every finite-dimensional irreducible module is the quotient of the induced module for a unique character $\chi$. 

\end{definition}

Recall that we always assume $q$ is not a root of unity. In this case, the category of finite-dimensional $U$-modules is semisimple and resembles the case $U(\g)$. 

\begin{example}
 For every $n\geq0$ there are two irreducible $U_q(\sl_2)$-modules $L(n,\pm)$ of dimension $n+1$ with basis $m_0,m_1,\ldots, m_n$ such that 

 \begin{minipage}{0.5\textwidth}
\begin{align*}&Km_i=\pm q^{n-2i}m_i\\
 &Fm_i=\begin{cases}
 m_{i+1},& \text{ for } i<n,\\
 0,& \text{ for } i=n
 \end{cases}\\
 &Em_i=\begin{cases}
 \pm [i]_q[n+1-i]_q m_{i-1},& \text{ for } i>0,\\
 0,& \text{ for } i=0
 \end{cases}
 \end{align*}
\end{minipage}

\noindent
Every finite-dimensional irreducible $U_q(\sl_2)$-module is of this form. 
\end{example}

\section{Borel subalgebras}

\subsection{Main Definitions}

\noindent
We are interested in a particular class of right coideal subalgebras:

\begin{definition}
We call an algebra \emph{basic}
if all its finite-dimensional irreducible representations are one-dimensional.
\end{definition}

\begin{example}
Finite dimensional algebras are basic iff they
are basic in the usual sense i.e. $A/\mathrm{Rad}(A)\cong \acf^n$.
Commutative algebras are examples of basic algebras. Universal enveloping algebras
of solvable Lie algebras are basic. 
\end{example}

\begin{definition}\label{def_Borel}
We call a right coideal subalgebra of the Hopf algebra $U=U_q(\g)$ a \emph{Borel
subalgebra} if it is basic and it is maximal (with respect to inclusion) among
all basic right coideal subalgebras.
\end{definition}

The idea of this notion is to generalize the characterization of a Borel subalgebra as the maximal solvable Lie subalgebra.

\begin{example}\label{standardborel}
All homogeneous Borel subalgebras $B$ of $U$, which we recall means $B\supset U^0$, are isomorphic
to the \emph{standard Borel subalgebra} $U^{\geq 0}$, via some Lusztig automorphism $T_w$.
We will prove this in Section~\ref{sec_standardBorel}.
\end{example} 

\begin{example}\label{Weyl algebra}
In $U_q(\sl_2)$ there is a family of non-homogeneous and non-$\Lambda$-graded triangular Borel subalgebras
$B_{\lambda,\lambda'}$ with algebra generators
$$E^\phi:=EK^{-1}+\lambda K^{-1},\qquad F^\phi:=F+\lambda'K^{-1},\qquad
\lambda\lambda'=\frac{q^2}{(1-q^2)(q-q^{-1})}$$
We can check directly that the elements $E^\phi,F^\chi$ each generate a right coideal subalgebra, independent on the condition on $\lambda\lambda'$, because the coproducts are
$$\Delta(E^\phi)=1\otimes EK^{-1}+E^\phi\otimes K^ {-1},\qquad
\Delta(F^\phi)=1\otimes F+F^\phi\otimes K^ {-1},\qquad
$$
Hence together they again generate a right coideal subalgebra. The condition on $\lambda\lambda'$ enters when we want to determine which subalgebra the elements $E^\phi,F^\phi$ generate. We compute the commutator 
\begin{align*}
[E^\phi,F^\phi]_{q^2}&=[EK^{-1}+\lambda K^{-1},\;F+\lambda'K^{-1}]_{q^2}\\
&=\frac{K-K^{-1}}{q-q^{-1}}q^2K^{-1}+\lambda\lambda' K^{-1}K^{-1}(1-q^2)\\
&=\frac{q^2}{q-q^{-1}}\cdot 1
\end{align*}
so the algebra generated by $E^\phi,F^\phi$ is isomorphic to the quantum Weyl algebra, which is basic, see Lemma \ref{lm_qWeyl_basic} below. The maximality among all basic right coideal subalgebra will be proven in the next section.  On the other hand, if the value of $\lambda\lambda'$ is not as stated, then this commutator also contains $K^{-2}$ as a summand, and since $1,K^{-2}$ commute differently with $E^\phi,F^\phi$ we ultimately generate an algebra close to the entire algebra $U_q(\sl_2)$, in particular it cannot be basic.\\

In the setting of Corollary \ref{cor_triangular}, this triangular right coideal subalgebra is 
$$B_{\lambda,\lambda'}=\Uminus{s_1}_{\phi_-}\,\acf[1]\, S(\Uplus{s_1})_{\phi_+}$$
for characters $\phi_+(EK^{-1})=\lambda$ and $\phi_-(F)=\lambda'$, under the stated condition on $\lambda\lambda'$.\\

We also remark that all the $B_{\lambda,\lambda'}$ for different compatible choices of $\lambda,\lambda'$ are isomorphic under a Hopf algebra automorphism of $U$ given by a rescaling of $E,F$. The family $B_{\lambda,\lambda'}$ is an open orbit under this action, and the boundary of this orbit consists of the graded Borel subalgebras $U^{\geq0}$ for $\lambda'\to\infty$ and $U^{\leq0}$ for
$\lambda\to\infty$. We can think of $B_{\lambda,\lambda'}$ as interpolating between them. Due to the denominator there seem to be no analogs of these Borel subalgebras in the classical limit $q\to 1$.
\end{example}

We now prove that $B_{\lambda,\lambda'}$ for $\lambda\lambda'=\frac{q^2}{(1-q^2)(q-q^{-1})}$ is basic, because it is isomorphic to the quantum Weyl algebra. Note that to satisfy conventions we replaced $q$ by $q^2$ and rescale $E^\phi$ or $F^\phi$ to remove the scalar $\frac{q^2}{q-q^{-1}}$ in the commutator $[E^\phi,F^\phi]_{q^2}$:

\begin{lemma}\label{lm_qWeyl_basic}
The \emph{quantum Weyl algebra}, defined by
$$\langle X,Y\rangle/(XY-qYX-1)$$ 
is basic, if $q$ is not a root of unity.
\end{lemma}
\begin{proof}
Let $V$ be a finite-dimensional irreducible representation. Consider an eigenvector $v$ of the element $T:=YX$ with eigenvalue $t$ (which is related to the quantum Casimir). One can easily see that $Yv$ is an eigenvector (or zero) of $T$ with eigenvalue $qt+1$ (or zero), since
$$YX(Yv)=Y(qYX+1)v=(qt+1)Yv$$
Similarly, one can show that $Xv$ is an eigenvector (or zero) of $T$ with eigenvalue $\frac{1}{q}(t-1)$, since
$$YX(Xv)=\frac{1}{q}(XYX-X)v=\frac{1}{q}(t-1)Xv$$
and so forth. As $V$ is finite-dimensional, there cannot be infinitely many pairwise distinct eigenvalues. So either we have in this chain a nonzero vector $v'$ with  $Xv'=0,\,Y^kv'=0$, but then applying $X$ to $Y^kv'=0$ gives $\frac{q^k-1}{q-1}v'=0$, but $q$ is not a root of unity. Or the only appearing eigenvalue of $T$ has to be the unique fixed-point $t=\frac{1}{1-q}$ of the recursion formula $t\mapsto qt+1$. For this eigenvalue $t$ we get
$$XYv=(qYX +1)v=\left(\frac{1}{1-q}q+1\right)v=\frac{(q+1-q)}{1-q}v=tv$$
As $T$ has only the eigenvalue $t$, it acts as a scalar on $V$, the same is true for $XY$. 
Thus $X$ and $Y$ commute on all of $V$, since $V$ is assumed irreducible, and $V$ factors to a representation of the commutative algebra $\acf[X,Y]/(XY-t)$. In particular, any finite-dimensional irreducible representation is one-dimensional, as asserted.
\end{proof}


\subsection{Example \texorpdfstring{$A_1$}{A1}}\label{sec_exampleA1}
We now classify all Borel subalgebras of $U_q(\sl_2)$.
The root system $A_1$ of rank $1$ has $\Phi=\{\alpha,-\alpha\}$ with ${(\alpha,\alpha)=2}$. 
The corresponding quantum group
$U_q(\sl_2)$ is generated by $E,F,K,K^{-1}$ such that
$$[E,F]_1=
\frac{K-K^{-1}}{q-q^{-1}}\quad [E,K]_{q^{-2}}=[F,K]_{q^2}=0$$

\begin{theorem}\label{thm_BorelKlassifikation_sl2}
The Borel subalgebras of $U_q(\sl_2)$ are 
\begin{itemize}
 \item The standard Borel subalgebras $U^{\geq0}$ and $U^{\leq0}$.
 \item For any pair of scalars with $\lambda\lambda'=\frac{q^2}{(1-q^2)(q-q^{-1})}$ the algebra in Example \ref{Weyl algebra}
 $$B_{\lambda,\lambda'}:=\langle EK^{-1}+\lambda K^{-1}, F+\lambda'K^{-1}\rangle$$
\end{itemize}
\end{theorem}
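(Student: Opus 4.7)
The plan is to first verify each of the three listed families consists of basic right coideal subalgebras, then to classify an arbitrary basic right coideal subalgebra $C\subseteq U_q(\sl_2)$ by analysing the $\Lambda$-grading of its elements. The standard Borels $U^{\geq 0}$ and $U^{\leq 0}$ are Hopf subalgebras, hence automatically right coideal; they are basic because on any finite-dimensional irreducible $K$ acts diagonally and the nilpotent $E$ (resp.\ $F$) must act as zero (since $q$ is not a root of unity), reducing the irreducible to a one-dimensional character of $U^0$. For $B_{\lambda,\lambda'}$ the coideal property is the direct computation
$$\Delta(\bar E)=\bar E\otimes K^{-1}+1\otimes\bar E-\lambda\cdot 1\otimes K^{-1},\qquad \Delta(\bar F)=\bar F\otimes K^{-1}+1\otimes\bar F-\lambda'\cdot 1\otimes K^{-1},$$
both of which lie in $B_{\lambda,\lambda'}\otimes U_q(\sl_2)$; basicness is Example \ref{Weyl algebra} together with the preceding quantum Weyl algebra lemma.

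For exhaustiveness I take a basic right coideal subalgebra $C$ and expand each $x\in C$ in the PBW basis $\{F^iK^jE^k\}$. Let $d$ be the maximal $\Lambda$-degree $k-i$ occurring among leading monomials of elements of $C$. If $C$ lies entirely in non-negative (resp.\ non-positive) $\Lambda$-degree, then $C\subseteq U^{\geq 0}$ (resp.\ $U^{\leq 0}$) and we are done. Otherwise $C$ contains elements of both signs of degree. Using the explicit coproducts $\Delta(E^d)=\sum_k\binom{d}{k}_{q^2}E^{d-k}K^k\otimes E^k$ and its $F$-analogue, the coideal condition $\Delta(C)\subseteq C\otimes U_q(\sl_2)$ forces all matrix coefficients $E^{d-k}K^{k+j}$ into $C$. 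An extremal positive-degree element thus pulls $EK^a$ and the Cartan power $K^{d+j}$ into $C$, and an analogously extracted element pulls $FK^b$ into $C$ on the negative side. If the extremal degree is $\geq 2$ then additionally $E^2K^{a'}$ (and its Cartan counterpart) are forced in, and combined with $FK^b$ this generates a subalgebra containing finite-dimensional non-trivial irreducibles of $U_q(\sl_2)$, contradicting basicness. Hence $d=1$.

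After this extraction one has generators $\bar E=\alpha EK^a+(\text{lower})$ and $\bar F=\beta FK^b+(\text{lower})$; the coideal axiom forces the lower terms to be scalar multiples of $K^a$ and $K^b$ respectively, and after rescaling these take exactly the normalised form of Example \ref{Weyl algebra}. The commutator $[\bar E,\bar F]_{q^2}$, computed as in the calculation preceding the quantum Weyl algebra lemma, produces a $K^{-2}$ term unless $\lambda\lambda'=\tfrac{q^2}{(1-q^2)(q-q^{-1})}$; the presence of $K^{-2}$ in $C^0$, combined with $\bar E,\bar F$, recovers enough of $U_q(\sl_2)$ to break basicness, so the scalar constraint must hold. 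Therefore $C\subseteq B_{\lambda,\lambda'}$ for an admissible pair. Maximality of each listed algebra follows from the reverse direction of the same analysis: any strict coideal extension either introduces a new extremal $\Lambda$-degree or a new Cartan element, and both are ruled out by the argument above.

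The main obstacle will be the extremal-degree extraction in the second paragraph: the coideal axiom pulls many PBW matrix coefficients into $C$ at once, and bookkeeping the $q$-binomial combinatorics carefully is needed both to rule out generators of degree $\geq 2$ and to pin down the scalar lower-order corrections. Once that is done, the final commutator calculation is exactly the one already displayed in Example \ref{Weyl algebra}.
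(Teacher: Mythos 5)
Your verification that the three listed families are right coideal subalgebras (the explicit coproduct calculation for $\bar E,\bar F$ is correct) and basic (via the quantum Weyl algebra lemma) matches what the paper needs, and is done correctly.

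However, the classification/maximality part has a genuine gap, and it is located exactly where you flag "the main obstacle." The paper does \emph{not} re-derive the structure of generators from the coideal axiom; it cites a generator theorem from \cite{Vocke16} (Thm.\ 4.11, Lemma 3.5) that states any right coideal subalgebra of $U_q(\sl_2)$ is generated by elements of four specific normal forms, one of which is the genuinely mixed type $EK^{-1}+c_F F+c_K K^{-1}$ (a linear combination across strictly positive and negative $\Lambda$-degree). With that result in hand, the paper reduces the classification to a finite combinatorial check (the table of pairwise-generated algebras), followed by ruling out the non-basic cases by the $K^{-2}$/$U_q(\sl_2)$-restriction argument. Your sketch tries to bypass this citation by a direct PBW/degree analysis, but the key step "the coideal condition forces all matrix coefficients $E^{d-k}K^{k+j}$ into $C$" is only valid for a single PBW monomial; for a general element of $C$ (a linear combination of PBW monomials of possibly different degrees) the coideal axiom only forces in certain linear combinations of left tensor factors, and disentangling them to arrive at the normal forms — in particular detecting the mixed generator type, which your paragraph never mentions — is precisely the nontrivial content of the \cite{Vocke16} result. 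In addition, "extremal $\Lambda$-degree" is not a priori well-defined (e.g.\ $U^{\geq 0}$ has elements of unbounded degree), and the claim that "the lower terms are scalar multiples of $K^a$ and $K^b$" is asserted but not derived. So the approach is in the same spirit as the cited generator theorem rather than a genuinely different route, but the substantive lemma that the paper outsources to \cite{Vocke16} is left unproven, and the maximality argument ("any strict extension introduces a new degree or Cartan element, and both are ruled out") is a restatement of the desired conclusion rather than a proof.
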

\begin{proof}
We know from the previous lemma that all algebras in the assertion are basic and we know (or check immediately) that they
are right coideal subalgebras. The classification and the maximality are more difficult issues:

We know from \cite[Thm 4.11, Lm 3.5]{Vocke16} that any right coideal
subalgebra in $U_q(\sl_2)$ has a set of generators of the form
$K^i,\;EK^{-1}+\lambda_EK^{-1},\;F+\lambda_FK^{-1},\; EK^{-1}+c_FF+c_KK^{-1}$
for some constants $c_F,c_K,\lambda_F,\lambda_E$. We now check for all
combinations of two such elements, which algebra they roughly generate:

\begin{narrow}[left=-1cm]
\begin{center}
\begin{tabular}{|c||c|c|c|c|}
\hline
 & $K$ & $EK^{-1}+\lambda_E'K^{-1}$ & $F+\lambda_F'K^{-1}$ &$EK^{-1}+c_F'F+c_K'K^{-1}$\\
 \hline\hline
 $K$ & $U^0$ & $U^{\geq0}$ & $U^{\leq0}$ & $U$\\
 \hline
 $EK^{-1}+\lambda_EK^{-1}$ & $U^{\geq0}$ & $\subseteq U^{\geq0}$ & $B_{\lambda,\lambda'}$& $B_{\lambda,\lambda'}$ \\
 \hline
 $F+\lambda_FK^{-1}$ & $U^{\leq0}$ & $B_{\lambda,\lambda'}$ & $\subseteq U^{\leq0}$ & $B_{\lambda,\lambda'}$  \\
 \hline
 $EK^{-1}+c_FF+c_KK^{-1}$& $U$ & $B_{\lambda,\lambda'}$  & $B_{\lambda,\lambda'}$  & $\subseteq B_{\lambda,\lambda'}$ \\ 
\hline
 \end{tabular}
\end{center}
\end{narrow}~\\

for suitable values $\lambda,\lambda'$ which not necessarily fulfill the condition on $\lambda\lambda'$. Most entries in the table follow simply by subtracting suitable multiples of one another (and ignoring possible $K$-powers). The entry involving $EK^{-1}+c_FF+c_KK^{-1}$ and $K$ comes from the fact that $K$ commutes differently with $E,F$, so suitable commutators return the individual summands, again up to $K$-powers.

Clearly, $U$ is not basic.
When the elements generate some $B_{\lambda,\lambda'}$ but fail the condition on
$\lambda\lambda'$, then as we saw their commutator
$[EK^{-1}+\lambda K^{-1}, F+\lambda'K^{-1}]_{q^2}$ is a linear combination
of $1$ and $K^{-2}$ with non-zero coefficients,
so the generated algebra contains $K^{-2}$. This situation
is very similar to $U$, and indeed since every irreducible $U$-module
$L(k,\pm)$ restricts to an irreducible over this algebra, thus we obtain
irreducible higher-dimensional representations. 

So the only remaining cases are when these two generators are equal, or when
they generate $U^{\geq0},U^{\leq0}$ or when they generate
$B_{\lambda,\lambda'}$.
\end{proof}

In upcoming proofs, we will frequently use restrictions of suitable $U$-modules to construct higher-dimensional representations of some $B$ in question. The restrictions are in general neither irreducible nor semisimple. For a Borel subalgebra, all composition factors need to be one-dimensional.
\begin{example}\label{exm_restriction}
As an explicit example, let $L(\mu),\mu=\alpha/2$ be the 2-dimensional module for $U_q(\sl_2)$ with basis $x_0:=v_{\mu},~x_1:=v_{\mu-\alpha}$. Then the elements of the Borel subalgebra~$B_{\lambda,\lambda'}$ act as follows:
\begin{align*}
(EK^{-1}+\lambda K^{-1}).x_0&=\lambda q^{-1}x_0\hspace{1.8cm} (EK^{-1}+\lambda K^{-1}).x_1=\lambda qx_1+qx_0\\
(F+\lambda'K^{-1}).x_0&=\lambda'q^{-1}x_0+x_1\quad \hspace{1.02cm} (F+\lambda'K^{-1}).x_1=\lambda'qx_1
\end{align*}
Hence $L(\mu)$ restricted to $B_{\lambda,\lambda'}$ has a one-dimensional submodule $\langle x_0+\lambda(1-q^{-2})x_1\rangle$, where the elements $EK^{-1}+\lambda K^{-1},F+\lambda'K^{-1}$ act with eigenvalues $\lambda q,\lambda' q^{-1}$, and a one-dimensional quotient $L(\alpha/2)/(x_0+\lambda(1-q^{-2})x_1)$ with eigenvalues $\lambda q^{-1},\lambda' q$. 
\end{example}

\subsection{Homogeneous Borel subalgebras}\label{sec_standardBorel}

We want to prove that all homogeneous Borel subalgebras of $U=U_q(\g)$ are
standard Borel subalgebras i.e. images of  $U^{\leq 0}$ under a Lusztig automorphism $T_w,w\in W$. 
We start by proving that they are basic:
\begin{lemma}\label{nilpotentede}
 For any Weyl group element $w\in W$ consider the right coideal subalgebra
 $\Uminus{w}$ and the restriction of the counit $\varepsilon:U\to \acf $. Then any
 finite-dimensional irreducible representation $V$ of $\Uminus{w}$ on which all elements in
 $\Uminus{w}\cap \ker(\varepsilon)$ act nilpotently is one-dimensional and hence the
 trivial representation $\acf _\varepsilon$.
\end{lemma}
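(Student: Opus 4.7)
The plan is to reduce the statement to showing that the augmentation ideal $J := U^-[w]\cap \ker(\epsilon)$ acts as zero on $V$. Once that is established, $V$ becomes a representation of $U^-[w]/J \cong \acf$, so $V$ is forced to be one-dimensional, and the action of any $x \in U^-[w]$ is by the scalar $\epsilon(x)$, i.e.\ $V \cong \acf_\epsilon$. The underlying decomposition used throughout is $U^-[w] = \acf\cdot 1 \oplus J$, which holds because $\epsilon$ is an algebra map sending $1$ to $1$, so every $x\in U^-[w]$ can be uniquely written as $\epsilon(x)\cdot 1 + (x-\epsilon(x))$ with $x-\epsilon(x)\in J$.

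First I would let $A$ denote the image of $J$ in $\mathrm{End}_\acf(V)$. Since $V$ is finite-dimensional, $A$ is a finite-dimensional (non-unital) associative subalgebra of $\mathrm{End}(V)$, and by hypothesis every element of $A$ is nilpotent. At this point I would invoke the associative analogue of Engel's theorem: a nil subalgebra of $\mathrm{End}_\acf(V)$ with $\dim V<\infty$ is itself nilpotent as an algebra, so $A^n = 0$ for some $n\geq 1$. (This is standard, and can be seen for instance from the fact that the Jacobson radical of a finite-dimensional algebra is nilpotent, or by simultaneous strict upper triangularisation of $A$.)

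Next, set $V^J := \{v\in V : J v = 0\}$. Picking $n$ minimal with $A^n V = 0$, the subspace $A^{n-1}V \subseteq V^J$ is nonzero, so $V^J \neq 0$. I would then check that $V^J$ is a $U^-[w]$-submodule: for $v\in V^J$ and $x = \epsilon(x)\cdot 1 + j$ with $j \in J$, one has $x\cdot v = \epsilon(x) v + j\cdot v = \epsilon(x) v$, which is still annihilated by $J$. Irreducibility of $V$ then forces $V^J = V$, i.e.\ $J$ acts as zero, which yields the desired conclusion as above.

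The main obstacle is really only the nil-implies-nilpotent ingredient for $A$; once this is granted, the rest is a formal submodule-of-joint-null-vectors argument. If one preferred not to cite it, a direct induction on $\dim V$ — picking a single $j \in J$ and applying the hypothesis to $\ker(j)$ and $V/\ker(j)$ as $U^-[w]$-invariant subquotients — could be used to construct a common null vector by hand, but the above route is cleaner and fits with the Hopf-algebraic setting of the paper.
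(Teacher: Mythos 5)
Your proof is correct, and it takes a genuinely different (and more elementary) route than the paper. The paper exploits the smash-product decomposition $U^-[w]\cong\acf[F_\alpha]\#T_\alpha(U^-[s_\alpha w])$ from \cite{HS09} and runs an induction on $\ell(w)$: restricting to $T_\alpha(U^-[s_\alpha w])$, one finds a joint $\epsilon$-eigenspace $V_0$, which the $\mathrm{ad}_{F_\alpha}$-stability of the smaller subalgebra shows is $F_\alpha$-stable, and then nilpotence of $F_\alpha$ finishes. Your argument ignores the internal structure of $U^-[w]$ altogether: you pass to the image $A$ of the augmentation ideal $J$ in $\mathrm{End}(V)$, observe it is a finite-dimensional nil (non-unital) algebra, invoke Wedderburn's theorem (equivalently: a nil ideal lies in the Jacobson radical, which is nilpotent in finite dimension) to get $A^n=0$, take $V^J\supseteq A^{n-1}V\neq 0$, and use the decomposition $U^-[w]=\acf\cdot 1\oplus J$ to see $V^J$ is a submodule. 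This is cleaner, works over any base field, and in fact proves the statement for an arbitrary augmented algebra — the coideal and quantum-group structure plays no role. What the paper's heavier machinery buys is reuse: the very same induction-on-$\ell(w)$-via-smash-products template is recycled verbatim in the subsequent Lemma~\ref{lm_homogeneousede}, where the hypothesis is no longer that all augmentation-ideal elements act nilpotently (the group-like elements $K_\nu$ certainly do not), so your Wedderburn argument would not transfer there. One small caveat: the alternative you sketch at the end — inducting on $\dim V$ via $\ker(j)$ for a single $j\in J$ — does not quite work as stated, since $\ker(j)$ is not $U^-[w]$-invariant for a non-central $j$; but this is only offered as an aside, and your main argument stands.
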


\begin{proof}
From  \cite[Chp. 6]{HS09} we know, that $U^-[w]$ for $w=s_i v$ with $\ell(w)=1+\ell(v)$ is isomorphic to a smash
product $k[{F}_{\alpha_i}]\#T_{s_i}(U^-[v])$. By applying the anti-algebra isomorphism $\tau:U^+\to U^+$ in \cite[Sec. 4.6]{Jan96} with $T_{s_i}^{-1}=\tau T_{s_i} \tau^{-1}$, we also have that  $\Uminus{w}$ is isomorphic to a smash product  $k[{F}_{\alpha_i}]\#T_{s_i}^{-1}(\Uminus{v})$.\\

We prove our claim by induction on the length $\ell(w)$:
For $w=1$ with $\Uminus{1}=\acf1$ the claim is certainly true. Let the assertion be proven for $v\in W$ and consider the case $w:=s_i v$ with $\ell(w)=1+\ell(v)$. Assume a representation $V$ on which 
$\Uminus{w}\cap \ker(\varepsilon)$ acts nilpotently. Restricting to the subalgebra $T_{s_i}^{-1}(\Uminus{v})$ induction provides that all composition factors are one-dimensional $\acf _\varepsilon$. In particular, there is a common trivial eigenvector of this subalgebra $h.v=\varepsilon(h)v$. Let $V_0\subset V$ 
be the nontrivial subspace 
$$V_0=\{x\in V\mid h.x=\varepsilon(h)x \quad \forall h\in T_{s_i}^{-1}(\Uminus{v})\}$$
This is clearly a $T_{s_i}^{-1}(\Uminus{v})$-subrepresentation. Moreover we know that $\mathrm{ad}_{F_\alpha}$ acts on $T_{s_i}^{-1}(\Uminus{v})$ in the smash product, so 
for any $X_{\mu}\in T_{s_i}^{-1}(\Uminus{v})_{\mu}$
$$X_{\mu}F_{\alpha_i}.x=q^{(\mu,\alpha_i)}F_{\alpha_i}X_{\mu}.x+Y.x\text{ with some }Y\in T_{s_i}^{-1}(\Uminus{v})$$
It follows that $F_{\alpha_i}$ preserves $V_0$ and because $V$ was assumed irreducible
we get $V=V_0$. As $F_{\alpha_i}$ acts on $V$ nilpotently by assumption, it has an eigenvector $x'$ with eigenvalue $0$, which is then a one-dimensional trivial representation of $\Uminus{w}$.
\end{proof}

In particular, we know that all elements $F_{\alpha_i}\in U$ act
nilpotently on a finite-dimensional representation $V$ of $U$, see \cite[Prop. 5.1]{Jan96}. So the previous lemma
shows that all irreducible subquotients of $U$-modules are trivial
one-dimensional. For an arbitrary representation of $\Uminus{w}$ this is certainly
not true, as already the example $\Uminus{s_i}=\acf[F_{\alpha_i}]$ shows, but it
is to expect that $\Uminus{w}$ is still basic. For us, it is however sufficient to show:

\begin{lemma}\label{lm_homogeneousede}
 Given a Weyl group element $w\in W$, $L$ a subgroup of $\Lambda$ and the corresponding right coideal subalgebra $C=\acf [L]\Uminus{w}$ with the property, 
 that for all $\mu\in \Phi^+(w)$ there exists a $\nu\in L$ 
 such that $(\mu,\nu)\neq 0$, then $C$ is basic.
 
 In particular, $U^0\Uminus{w}$ is basic, as well as the right coideal subalgebra
 $U^0\Uplus{w}$.
\end{lemma}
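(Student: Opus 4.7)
The plan is to prove the Lemma by induction on $\ell(w)$, following the strategy of Lemma~\ref{nilpotentede} but accommodating the nontrivial Cartan part $\acf[L]$. For the base case $\ell(w)=0$ the algebra $C=\acf[L]$ is a commutative group algebra over the algebraically closed field $\acf$, so every finite-dimensional irreducible $C$-module is one-dimensional.

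For the inductive step, let $\ell(w)\ge 1$ and let $V$ be a finite-dimensional irreducible $C$-module. Choose a simple root $\alpha$ with $s_\alpha\le w$, write $w=s_\alpha v$ with $\ell(v)=\ell(w)-1$, and use $\Phi^+(w)=\{\alpha\}\sqcup s_\alpha\Phi^+(v)$ together with the smash product decomposition $U^-[w]\cong\acf[F_\alpha]\,\#\,T_\alpha(U^-[v])$ from \cite{HS09}. Consider the subalgebra $C_1:=\acf[L]\cdot T_\alpha(U^-[v])\subset C$. Since Lusztig's automorphism $T_\alpha$ is an algebra isomorphism from $U^-[v]$ onto $T_\alpha(U^-[v])$ and sends $K_\nu\mapsto K_{s_\alpha\nu}$, twisting the $C_1$-action on $V$ by $T_\alpha^{-1}$ produces a module over $\acf[s_\alpha L]\cdot U^-[v]$. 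The isometry of $s_\alpha$ combined with $s_\alpha\Phi^+(v)\subset\Phi^+(w)$ transfers the hypothesis for $(w,L)$ to the hypothesis for $(v,s_\alpha L)$. By the induction hypothesis $\acf[s_\alpha L]\cdot U^-[v]$ is basic, so the finite-dimensional $C_1$-module $V$ has a composition series with one-dimensional factors; its bottom yields a one-dimensional $C_1$-submodule $\acf v_0\subset V$ on which $C_1$ acts through a character $\chi$.

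To promote $\acf v_0$ to a one-dimensional $C$-submodule, I would exploit the nilpotency of $F_\alpha$ on $V$. Decomposing $V=\bigoplus_{\chi'}V_{\chi'}$ into generalized $\acf[L]$-eigenspaces, the root vector $F_\alpha$ shifts $\chi'\mapsto[\alpha]\chi'$ with $[\alpha](K_\nu)=q^{-(\alpha,\nu)}$; the hypothesis $\alpha\notin L^\perp$ gives $[\alpha]\ne 1$, and finiteness of the set of appearing characters together with $q$ not being a root of unity forces $F_\alpha$ to be nilpotent. Following the endgame of Lemma~\ref{nilpotentede}, the smash product commutation
$$X\cdot F_\alpha\;=\;q^{(\alpha,\mu)}\,F_\alpha\cdot X\;+\;\mathrm{ad}_{F_\alpha}(X),\qquad X\in T_\alpha(U^-[v])_\mu,$$
shows that the $\chi$-eigenspace for $C_1$ in $V$ is preserved by $F_\alpha$; by irreducibility of $V$, this eigenspace must equal all of $V$. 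Nilpotency of $F_\alpha$ then produces a nonzero vector in $\ker F_\alpha$ spanning a one-dimensional $C$-submodule, which by irreducibility of $V$ is $V$ itself.

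The hard part will be verifying the $F_\alpha$-stability of the $\chi$-eigenspace when $\chi$ is a non-trivial character. In Lemma~\ref{nilpotentede} one uses $\chi=\epsilon$ so that $\chi(\mathrm{ad}_{F_\alpha}(X))=\epsilon(\mathrm{ad}_{F_\alpha}(X))=0$ automatically, whereas for the non-trivial characters $\chi$ arising here one must analyze the admissible characters of $T_\alpha(U^-[v])$ together with the fact that $\mathrm{ad}_{F_\alpha}$ strictly raises PBW length, in order to conclude that the correction term still vanishes on $v_0$. The statement for $U^0U^+[w]$ will follow either by symmetry or by replaying the argument with the antipode applied to $U^+[w]$.
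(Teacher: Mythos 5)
Your outline tracks the paper's strategy closely, and you correctly locate the one genuinely delicate point — but you leave it unresolved, and that is exactly the place where the paper adds something. You write that ``for the non-trivial characters $\chi$ arising here one must analyze the admissible characters of $T_\alpha(U^-[v])$ $\dots$ in order to conclude that the correction term still vanishes on $v_0$,'' and then gesture at ``$\mathrm{ad}_{F_\alpha}$ strictly raises PBW length.'' That last observation does not help: the issue is not the length of $\mathrm{ad}_{F_\alpha}(X)$ but whether $\chi$ kills it, and for that you need $\chi$ to vanish on $\ker\epsilon\cap T_\alpha(U^-[v])$, which is precisely the thing you have not argued.

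The paper closes this gap by proving a \emph{strengthened} statement by induction: not only is every finite-dimensional irreducible $C$-module one-dimensional, but in addition $U^-[w]$ acts by the counit $\epsilon$ on every one-dimensional $C$-module. With that strengthened hypothesis, the character $\chi$ you obtain on $\acf[L]T_\alpha(U^-[v])$ is automatically trivial on the $T_\alpha(U^-[v])$ factor, so $\chi(\mathrm{ad}_{F_\alpha}(X))=0$ for free, $V_0$ is a $C$-submodule, and the rest of your argument (irreducibility $\Rightarrow V=V_0$, then the $K_\nu$ eigenvalue trick forcing $F_\alpha$ to have a zero eigenvector) goes through exactly as you describe. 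Note also that the strengthened claim is in fact easy to verify once you think to state it: for any character $\chi$ of $C$ and any PBW root vector $F_\beta\in U^-[w]$ of degree $-\beta$, the hypothesis supplies $\nu\in L$ with $(\beta,\nu)\neq 0$; applying $\chi$ to $K_\nu F_\beta = q^{-(\beta,\nu)}F_\beta K_\nu$ gives $\chi(F_\beta)=0$, and since the root vectors generate $U^-[w]$ and $\chi$ is multiplicative, $\chi=\epsilon$ on $U^-[w]$. So the missing ingredient is cheap once identified, but it is missing, and your appeal to PBW length points in an unproductive direction. The $T_\alpha^{-1}$-twist to transport the hypothesis on $L$ to $s_\alpha L$ is handled more carefully in your write-up than in the paper, which is a nice touch, and your remark on deducing the $U^0U^+[w]$ case is fine.
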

 \begin{proof}
The proof is similar to the proof of Lemma \ref{nilpotentede}: 
We show, that each finite-dimensional irreducible representation
is one-dimensional and that $\Uminus{w}$ acts on all one-dimensional representations by $\varepsilon$.\\

As $\acf [L]$ is abelian, the claim holds for $w=1$. Again we consider inductively the restriction of a $\acf [L]\Uminus{w}$-module to the subalgebra $T_{s_i}^{-1}(\Uminus{v})$ where, by induction, we find a common eigenvector $x$ with $E_\beta.v=0$ and $K_\beta.x=\chi(K_\beta)x$ for all $\beta\in\Phi^+(w)$ and for some character $\chi:L\to\acf ^\times$ extended trivially to $\acf [L]T_{s_i}^{-1}(\Uminus{v})$. Again we consider the nontrivial subspace 
$$V_0=\{x\in V\mid h.x=\chi(h)x \quad \forall h\in \acf[L]T_{s_i}^{-1}(\Uminus{v})\}$$
and again the $\mathrm{ad}_{F_{\alpha_i}}$-stability of $T_{s_i}^{-1}(\Uminus{v})$ in the smash product, and the assumption that $T_{s_i}^{-1}(\Uminus{v})$ acts trivially implies that $V_0$ is a $\Uminus{w}$-submodule. \\

Let $x'$ be an eigenvector of $F_{\alpha_i}$, then the new issue in this proof is whether the eigenvalue $\lambda$ may be nontrivial. But using the assumed element $K_\nu$ with $(\alpha_i,\nu)\neq 0$ with commutator relation $K_\nu F_{\alpha_i}= q^{-(\alpha_i,\nu)}F_{\alpha_i} K_\nu$ with $q$ not a root of unity, implies then an infinite family of eigenvectors with distinct eigenvalues $q^{-n(\alpha_i,\nu)}$, which is a contradiction. Hence $F_{\alpha_i}$ acts on $V_0$ on the eigenvector by $0$ and we have again found a one-dimensional subrepresentation with trivial action of $\Uminus{w}$ as asserted.
\end{proof}

\begin{theorem}
 The basic right coideal subalgebra $U^0U^-$ is maximal with this property
 and hence a Borel subalgebra. We call it the \emph{standard Borel subalgebra}.

Conversely, each homogeneous Borel subalgebra $B$ is as algebra
isomorphic to the standard Borel subalgebra via some $T_w$. Explicitly
$B=\Uminus{w}\,U^0\, S(\Uplus{w^{-1}w_0})$.
\end{theorem}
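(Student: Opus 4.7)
The argument has three parts: (i) the standard Borel $U^0 U^-$ (the case $w=w_0$) is basic by Lemma~\ref{lm_homogeneousede} and is maximal among basic right coideal subalgebras; (ii) every homogeneous Borel $B$ has the form $U^+[w] U^0 U^-[w^{-1}w_0]$ for some $w \in W$; and (iii) the Lusztig operator $T_w$ implements the algebra isomorphism $U^0 U^- \cong B$.

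For the maximality in (i), suppose $U^0 U^- \subsetneq C$ with $C$ a homogeneous right coideal subalgebra. By the classification of homogeneous right coideals (Theorem~\ref{HS09} and its extension), $C = U^+[w_+] U^0 U^-$ for some $w_+ \neq 1$, so $C$ contains a root vector $E_\alpha$ for some simple $\alpha \leq w_+$; together with $F_\alpha, K_\alpha^{\pm 1} \in U^0 U^- \subseteq C$, this produces $U_q(\sl_2)_\alpha \subseteq C$. To exhibit a two-dimensional irreducible $C$-module, I would restrict the fundamental representation $L(\varpi_\alpha)$ of $U_q(\g)$ to $C$ and pass to the quotient by the weight submodule $N$ spanned by all weight spaces of $L(\varpi_\alpha)$ other than $V_{\varpi_\alpha}$ and $V_{\varpi_\alpha - \alpha}$. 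The $C$-stability of $N$ relies on the fundamental-weight identities $F_\beta v_{\varpi_\alpha} = 0$ for simple $\beta \neq \alpha$ (since $\langle \varpi_\alpha, \beta^\vee\rangle = 0$) and $F_\alpha^2 v_{\varpi_\alpha} = 0$ (since $\langle \varpi_\alpha, \alpha^\vee\rangle = 1$), together with a weight-chase ruling out that any $E_\gamma \in C$ raises a weight in $N$ to $\varpi_\alpha$ or $\varpi_\alpha - \alpha$: the source weights $\varpi_\alpha - \gamma$ and $\varpi_\alpha - \alpha - \gamma$ that would be needed either lie in $N$ already or fail to occur in $L(\varpi_\alpha)$. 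The two-dimensional quotient carries distinct $K_\alpha$-eigenvalues $q, q^{-1}$; any $C$-submodule is $U^0$-stable, hence a sum of these eigenlines, but neither line is simultaneously $E_\alpha$- and $F_\alpha$-stable, so the quotient is $C$-irreducible of dimension two, contradicting basicness.

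For (ii) and (iii), let $B = U^+[w_+] U^0 U^-[w_-]$ be any homogeneous Borel (classification). If some $\gamma \in \Phi^+(w_+) \cap \Phi^+(w_-)$, then the PBW root vectors $E_\gamma, F_\gamma, K_\gamma^{\pm 1}$ all lie in $B$, generating $U_{q_\gamma}(\sl_2)_\gamma$, and conjugation by an appropriate $T_u$ with $u\alpha_i = \gamma$ reduces to the simple-root argument of (i), producing a two-dimensional $B$-irreducible composition factor and contradicting basicness. Hence $\Phi^+(w_+) \cap \Phi^+(w_-) = \emptyset$, and maximality of $B$ forces $\Phi^+(w_-) = \Phi^+ \setminus \Phi^+(w_+) = \Phi^+(w_+^{-1}w_0)$, yielding the explicit form with $w = w_+$. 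For (iii), $T_w$ fixes $U^0$ and sends each $F_\beta$ ($\beta \in \Phi^+$) to a root vector of weight $-w\beta$: this lies in $U^-$ when $w\beta > 0$, and (up to a Cartan factor) in $U^+$ at the positive root $-w\beta \in \Phi^+(w)$ when $w\beta < 0$. Counting the positive roots that appear in each summand identifies $T_w(U^0 U^-) = U^+[w] U^0 U^-[w^{-1}w_0]$ as subalgebras of $U_q(\g)$, giving the algebra isomorphism and transporting basicness from the standard Borel.

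The main obstacle I anticipate is the $C$-stability verification for $N$ in (i) when $w_+$ is strictly larger than $s_\alpha$ and $C$ contains multiple additional root vectors $E_\gamma$: the weight-chase must simultaneously rule out exits from $N$ for every such $\gamma$, which may require either careful choice of $\alpha$ (adapted to a reduced expression of $w_+$) or replacing the fundamental module by a more finely chosen $U_q(\g)$-module. The corresponding obstruction for non-simple $\gamma$ in (ii) is handled in principle by the braid-group conjugation, but the transplanted module structure must be tracked explicitly to confirm that the two-dimensional composition factor survives.
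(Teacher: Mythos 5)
Your overall plan (use the classification of homogeneous right coideals, show $\Phi^+(w_+)\cap\Phi^+(w_-)=\emptyset$ is forced, then finish with complementarity and $T_w$) matches the paper's skeleton. But your argument for the central step — that a nonempty intersection kills basicness — takes a genuinely different and, as written, incomplete route.

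The paper's argument is an eigenvalue argument, not a construction of an explicit two-dimensional irreducible. Once $\mu\in\Phi^+(w_+)\cap\Phi^+(w_-)$, the subalgebra $B$ contains $E_\mu K_\mu^{-1}$, $F_\mu$, and hence $K_\mu^{-2}$. On \emph{any} one-dimensional $B$-module the $\sl_2$-commutator relation in rank $\mu$ forces $K_\mu^{-2}$ to act by $1$. Now choose any dominant integral $\lambda$ with $(\lambda,\mu)\neq 0$: the highest weight vector of $L(\lambda)$ is a $K_\mu^{-2}$-eigenvector with eigenvalue $q^{-2(\lambda,\mu)}\neq 1$. Since $K_\mu^{-2}$ acts diagonally on the whole of $L(\lambda)$, not all composition factors of $L(\lambda)|_B$ can be one-dimensional — no submodule structure, weight-chase, or quotient construction needed. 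This sidesteps semisimplicity issues cleanly.

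Your route, by contrast, attempts to exhibit a concrete two-dimensional irreducible quotient of $L(\varpi_\alpha)|_C$ by killing the complementary weight subspace $N$. The gap you flag yourself is real and fatal in general: $N$ fails to be $C$-stable as soon as $\Phi^+(w_+)$ contains a non-simple root $\gamma$ with $\varpi_\alpha-\gamma$ a weight of $L(\varpi_\alpha)$. A minimal counterexample is $\g=\sl_3$, $\alpha=\alpha_1$, $C=U^+[s_1 s_2]U^0 U^-$: then $\alpha_1+\alpha_2\in\Phi^+(s_1s_2)$, and $E_{\alpha_1+\alpha_2}$ maps the lowest weight space $V_{\varpi_1-\alpha_1-\alpha_2}\subset N$ onto $V_{\varpi_1}\not\subset N$. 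So the quotient is not even a $C$-module, and there is no obvious fix by "replacing the fundamental module by a more finely chosen $U_q(\g)$-module" while keeping the argument this explicit. A secondary concern: the "conjugation by $T_u$" step in your part (ii) needs care, since $T_u$ is an algebra automorphism but not a Hopf automorphism, so $T_u^{-1}(B)$ need not be a right coideal subalgebra; for producing a higher-dimensional irreducible this can be patched, but it is another place where the eigenvalue argument is strictly simpler. Your part (iii) is fine and matches the paper.

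In short: you correctly reduced to ruling out $\Phi^+(w_+)\cap\Phi^+(w_-)\neq\emptyset$, but the explicit two-dimensional quotient construction does not close, for the reason you already suspected; replace it with the $K_\mu^{-2}$-eigenvalue argument and the proof goes through.
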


\begin{proof}
 Since every right coideal subalgebra $B\supset \Uminus{w}U^0$ is by definition
	homogeneous it is sufficient for both assertions to prove that any homogeneous
	basic right coideal subalgebra is contained in some $B=T_w(U^-U^0)$.
 
 By  Theorem \ref{HK11a} every homogeneous right coideal
	subalgebras of
	$U_q^\pm$ is of the form:
 $$B=S(U^-[w_-])\,U^0\,U^+[w_+]
 =\Uminus{w_-}\,U^0\,S(\Uplus{w_+})
 $$
 We are finished if we can prove from $B$ being basic that 
 $\Phi^+(w_+)\cap\Phi^+(w_-)=\{\}$. So assume to the contrary that there is $\mu
 \in \Phi^+(w_+)\cap\Phi^+(w_-)$, so we have root vectors $E_\mu K_\mu^{-1},F_\mu\in
 B$ and hence $K_\mu^{-2}\in B$. Let $\lambda$ be a dominant integral weight
 with $(\lambda,\mu)\neq 0$, then there exists in the $U$-module
 $L(\lambda)$ a $K_\mu^{-2}$-eigenvector with eigenvalue $\neq 1$, say the
 highest weight vector. But $B$ contains a copy of $U_q(\sl_2)$ and all
 one-dimensional representations of this requires $K_\mu^{-2}$ to act with eigenvalue~$=1$.
 Thus the restriction of $L(\lambda)$ to $B$ has to have some
 higher-dimensional irreducible composition factor (note that the eigenvalue
 argument does not require semisimplicity). Hence $B$ is in this case not
 basic. 
\end{proof}

\section{Induction of one-dimensional characters}\label{sec_induction}

\subsection{Definition and first properties}

One of the reasons Borel subalgebras of a Lie algebra or quantum group are
interesting is because they can be used to construct induced modules and a
category $\mathcal{O}$. An interesting implication of defining and classifying
unfamiliar Borel subalgebras $B$ is to study the respective induced
representations from $B$ to $U=U_q(\g)$ of any one-dimensional $B$-module
$\acf _\chi$:
$$V(B,\chi):=U \otimes_B \acf _\chi$$

As defining property for Borel subalgebras $B$ we chose to generalize the Lie
algebra term ``maximal solvable'' by ``maximal with the property \emph{basic}''
i.e. all irreducible finite-dimensional representations are one-dimensional.
This matches the upcoming purpose since we can again use solely
one-dimensional characters of $B$. For a right
coideal subalgebra $B$ the category $\Rep(B)$ is not a tensor category, but a right $\Rep(H)$-module category, where $V\in\Rep(H)$ acting on $M\in\Rep(B)$ gives $M\otimes_\acf V$ with $B$-action given by $b^{(1)}\otimes b^{(2)}\in C\otimes H$.

\begin{lemma}
 For a right coideal subalgebra of a Hopf algebra $B\subset H$,
 the induction functor ${\Rep(B)\to \Rep(H)}$ and the restriction functor $\Rep(H)\to \Rep(B)$ are both morphisms of right $\Rep(H)$-module categories.
\end{lemma}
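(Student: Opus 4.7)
The plan is to explicate the right $\Rep(H)$-module structure on $\Rep(B)$ and then produce, for each functor, a natural isomorphism of the module-functor type $F(M\otimes V)\cong F(M)\otimes V$. Concretely, for $M\in\Rep(B)$ and $V\in\Rep(H)$, the $B$-module $M\otimes_\acf V$ is defined by $b.(m\otimes v)=b^{(1)}m\otimes b^{(2)}v$, which makes sense precisely because $B$ is a right coideal subalgebra, so $\Delta(b)\in B\otimes H$ and thus $b^{(1)}\in B$ acts on $m$. So the first thing to check is that this actually defines an action and that the associator/unitor come from those of $\Rep(H)$ and hence automatically satisfy coherence.

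For the restriction functor the claim is essentially tautological: I would take the natural isomorphism $\Res(N\otimes V)\to \Res(N)\otimes V$ to be the identity on underlying vector spaces and observe that for $b\in B$ the action $b.(n\otimes v)=b^{(1)}n\otimes b^{(2)}v$ coming from restricting the $H$-action agrees, by the very definition above, with the right $\Rep(H)$-module structure on $\Rep(B)$. Coherence with the associator then reduces to coassociativity of $\Delta$.

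For induction the real content is to construct a natural isomorphism
\[
\phi_{M,V}\colon H\otimes_B(M\otimes V)\longrightarrow (H\otimes_B M)\otimes V,\qquad h\otimes(m\otimes v)\longmapsto h^{(1)}\otimes m\otimes h^{(2)}v.
\]
I would first check that $\phi_{M,V}$ descends to $\otimes_B$: for $b\in B$,
\[
\phi\bigl(hb\otimes(m\otimes v)\bigr)=h^{(1)}b^{(1)}\otimes m\otimes h^{(2)}b^{(2)}v=h^{(1)}\otimes b^{(1)}m\otimes h^{(2)}b^{(2)}v=\phi\bigl(h\otimes b.(m\otimes v)\bigr),
\]
where the middle step uses $b^{(1)}\in B$ and the defining relation of the tensor product over $B$, and the outer steps use $\Delta(hb)=h^{(1)}b^{(1)}\otimes h^{(2)}b^{(2)}$. $H$-linearity is immediate from coassociativity.

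The last step, which is the only thing that is not entirely formal, is to exhibit the inverse
\[
\psi_{M,V}\colon (H\otimes_B M)\otimes V\longrightarrow H\otimes_B(M\otimes V),\qquad h\otimes m\otimes v\longmapsto h^{(1)}\otimes\bigl(m\otimes S(h^{(2)})v\bigr).
\]
Here I expect the main (still short) obstacle: well-definedness over $\otimes_B$. Plugging $hb$ in place of $h$ and using coassociativity expands to $h^{(1)}b^{(1)}\otimes(m\otimes S(b^{(3)})S(h^{(3)})v)$ (after a use of $S(xy)=S(y)S(x)$), and then moving $b^{(1)}$ across $\otimes_B$ and collapsing $b^{(2)}S(b^{(3)})=\epsilon(b^{(2)})$ via the antipode axiom yields $h^{(1)}\otimes bm\otimes S(h^{(2)})v=\psi(h\otimes bm\otimes v)$. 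That the assignment $\psi$ landed in $B\otimes H$ before applying $\Delta$ again works because $b^{(1)}\in B$, which is again where we use the coideal subalgebra hypothesis. The identities $\phi\psi=\id$ and $\psi\phi=\id$ are then straightforward consequences of the antipode axiom and coassociativity. Finally, naturality in $M$ and $V$ and the coherence with associator and unit are routine diagram chases that I would only sketch, as they reduce to the standard Hopf-algebraic identities used above.
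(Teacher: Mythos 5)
Your proposal matches the paper's proof: the paper also takes the restriction structure map to be the identity and gives the induction structure map $h\otimes m\otimes v\mapsto h^{(1)}\otimes m\otimes S(h^{(2)}).v$ (your $\psi$), declaring it "easily checked." You simply fill in the verification the paper omits — exhibiting both $\phi$ and $\psi$, checking they descend through $\otimes_B$ (noting where $\Delta(b)\in B\otimes H$ is used), and invoking the antipode axiom for mutual inversion — so this is the same argument with more detail rather than a different route. (Minor typo: in the well-definedness check for $\psi$, the superscript $h^{(3)}$ should read $h^{(2)}$; $h$ only acquires two Sweedler legs there.)
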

\begin{proof}
 Restriction is a morphism of module categories with trivial structure map
	${\Res(V\otimes W)=\Res(V)\otimes W}$ due to the right coideal subalgebra
	property. Similarly, Induction is a morphism of module categories with structure map given by
 $$(H\otimes_B M)\otimes V\cong H\otimes_B(M\otimes V)$$
 $$h\otimes m\otimes v\mapsto h^{(1)} \otimes m\otimes S(h^{(2)}).v$$
 which is easily checked to be an $H$-module morphism. 
\end{proof}
\begin{remark}
 In our examples, $H$ is free as a $B$ module, which is very helpful in constructing the induced representations. This should be an instance of the celebrated result by Skryabin \cite{Skry06} that a Hopf algebra is free over its coideal subalgebras, which assumes finite dimension. It would be helpful to have a version that applies in our case. We think that $B$ being free over $U^0$ is the decisive property. 
\end{remark}

Our main interest is how induced representations of Borel subalgebras
decompose as $U$-modules. For a first result, we use a part of our defining property: 

\begin{lemma}\label{lm_InductionQuotientExistence}
 Let $H$ be a Hopf algebra and $B\subset H$ a basic right coideal subalgebra.
 Then any finite-dimensional irreducible $H$-module $V$
 is a quotient of an induced module
 $H \otimes_B \acf_\chi$ for some character $\chi$.
\end{lemma}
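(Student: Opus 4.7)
The plan is to combine Frobenius reciprocity (i.e.\ the tensor-Hom adjunction for the subalgebra inclusion $B\subset H$) with the defining property of a basic coideal subalgebra to produce the desired character and surjection. First, since $V$ is finite-dimensional as an $\acf$-vector space, its restriction $V|_B$ is a finite-dimensional $B$-module, so it contains a simple $B$-submodule $W$ (one may take any nonzero $B$-submodule of minimal $\acf$-dimension). The hypothesis that $B$ is basic then forces $\dim_\acf W = 1$, so $W = \acf\cdot w_0$ for some nonzero $w_0 \in V$, and the $B$-action on $W$ is given by a character $\chi : B\to \acf$.

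Next, I would invoke the adjunction
$$\Hom_H(H\otimes_B M,\, V) \;\cong\; \Hom_B(M,\, V|_B),$$
which is valid for any subalgebra $B\subset H$ and any left $B$-module $M$, with no appeal to the coideal property. Applied with $M=\acf_\chi$, the nonzero $B$-module map $\acf_\chi \to V|_B$, $1\mapsto w_0$, corresponds to an $H$-module map $\Phi: H\otimes_B \acf_\chi \to V$, explicitly $\Phi(h\otimes 1) = h.w_0$.

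Finally, $\Phi(1\otimes 1)=w_0\neq 0$, so $\Im(\Phi)$ is a nonzero $H$-submodule of $V$, and the irreducibility of $V$ forces $\Im(\Phi)=V$. Hence $V$ is a quotient of $H\otimes_B \acf_\chi$ for this character $\chi$, as claimed.

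The argument is entirely formal, so there is no genuine obstacle — note in particular that the coideal property of $B$ is not used here, and only the basic property together with the finite-dimensionality of $V$ play a role. The content of the statement is precisely that \emph{basic} is the right hypothesis to guarantee that a simple submodule of the finite-dimensional restriction $V|_B$ is automatically one-dimensional, and therefore yields a character by which one can induce.
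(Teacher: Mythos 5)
Your proof is correct and follows essentially the same route as the paper's: restrict $V$ to $B$, locate a one-dimensional $B$-submodule using the basic hypothesis, and induce up via Frobenius reciprocity, with irreducibility of $V$ forcing surjectivity. Your observation that only the algebra inclusion (not the coideal property) is used is also accurate and matches the paper's argument.
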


\begin{proof}
 Consider the restriction of the finite-dimensional module $V$ to $B$. Since $B$
 is basic, the composition series of $V$ consists of one-dimensional
 $B$-modules, and in particular there is a nontrivial $B$-module monomorphism
 $\acf_\chi\to V$ for a suitable $\chi$. This induces up to a nonzero $H$-module
 morphism $H \otimes_B \acf_\chi\to V$. For $V$ irreducible this morphism has to be surjective.
\end{proof}

\begin{example}
 Let $B=U^{\leq 0}$ be the standard Borel subalgebra of $U$, then
 every one-dimensional character $\chi:B\to \acf$ is zero on $U^-$ and thus
 comes from some group character $\chi:\Lambda\to \acf^\times$. The induced module
 $U\otimes_B \acf_\chi$ is isomorphic to $U^+$ as a
 $U$-module. It is a lowest-weight module generated by the vector $v$
 with $K_\mu.v=\chi(K_\mu)v$ and $F_\alpha.v=0$ for all $\alpha\in \Pi$.
\end{example}

We shall see that also for our new Borel subalgebras $B$ the induced modules for generic $\chi$ are irreducible, while special values of $\chi$ have as quotient each of the irreducible finite-dimensional $U$-modules $L(\lambda)$. However, the action of $U^0$ will not be diagonalizable anymore.\\

\begin{problem}
 Is it true that any induced module from a Borel subalgebra has a unique irreducible quotient? It is to expect that maximality enters here.
\end{problem}
\begin{problem}
 Can the graded algebra in the upcoming Conjecture A be used to understand these modules and determine the decomposition behavior of their induced modules? We would expect that $\gr(B)^0$ plays a similar role as $U^0$ in the standard case, so the graded modules $\gr(V)$ should be $\gr(B)^0$-diagonal.
\end{problem}
\begin{problem}
 Can a category $\mathcal{O}$ with nice properties be defined? In particular, it should be an abelian category, closed under submodules and quotients, with enough projectives and injectives. 
\end{problem}
%
%
%

\subsection{Example \texorpdfstring{$A_1$}{A1}}

We now want to construct and decompose all induced representations in the case $\sl_2$. From Section \ref{sec_exampleA1} we know all Borel subalgebras of $U_q(\sl_2)$ up to reflection: \\

In the familiar case $U^{\leq0}=\langle K,F \rangle$ all one-dimensional
representations are of the form $\chi(F)=0$ and $\chi(K)$ arbitrary. Then the
induced module is
$$V(U^{\leq0},\chi):=U \otimes_{U^{\leq0}} \acf_\chi\cong \acf[E]1_\chi$$
with lowest weight vector $K.1_\chi=\chi(K)1_\chi$. All of these modules have a diagonal action of $K$. For certain integral choices $V(U^{\leq0},\chi)$ has a finite-dimensional irreducible quotient and all finite-dimensional irreducible $U$-representations arise this way.\\ 

The novel case in Example \ref{Weyl algebra} is $B_{\lambda,\lambda'}:=\langle E^\phi,F^\phi\rangle$ with $E^\phi:=EK^{-1}+\lambda K^{-1},\, F^\phi=F+\lambda'K^{-1}$ and arbitrary scalars $\lambda\lambda'=\frac{q^2}{(1-q^2)(q-q^{-1})}$, an algebra isomorphic to the quantized Weyl algebra. As we showed, all finite-dimensional irreducible representations factorize over the commutative quotient 
$$\acf[E^\phi,F^\phi ] /( E^\phi F^\phi-\frac{q^2}{(q-q^{-1})(1-q^2)})$$
Hence all 
 one-dimensional representations are of the form $\chi(E^\phi)=e$, $\chi(F^\phi)=f$ with $ef=\frac{q^2}{(q-q^{-1})(1-q^2)}=\lambda\lambda'$.
Using the PBW basis we easily get 
$$V(B_{\lambda,\lambda'},\chi):=U \otimes_{B_{\lambda,\lambda'}} \acf_\chi\cong \acf[K,K^{-1}]1_\chi$$
and we calculate the action to be explicitly
\begin{align*}
 K.K^n1_\chi
 &=K^{n+1}1_\chi\\
 &\\
 F.K^n1_\chi
 &=q^{2n}K^nF1_\chi\\
 &=q^{2n}f\cdot K^n1_\chi-q^{2n}\lambda' \cdot K^{n-1}1_\chi\\
 &\\
 E.K^n1_\chi
 &=q^{-2n-2} K^{n+1}EK^{-1}1_\chi\\
 &=q^{-2n-2}e\cdot K^{n+1}1_\chi-q^{-2n-2}\lambda \cdot K^{n}1_\chi
 \end{align*}
or expressed as  matrices 
 \begin{align*}
K&=\begin{pmatrix}
 \ddots &&&&\\
 &0&0 &0 &\\
 & 1 & 0 & 0 &\\
 & 0 & 1 & 0 &\\
 &&&& \ddots 
\end{pmatrix}\\
F&=\begin{pmatrix}
 \ddots &&&&\\
 &q^{2(n-1)}f & -q^{2n}\lambda' & 0 &\\
 &0 & q^{2n}f & -q^{2(n+1)}\lambda' \\
 &0&0& q^{2(n+1)}f &\\
 &&&& \ddots 
\end{pmatrix}\\
E&=\begin{pmatrix}
 \ddots &&&&\\
 &-q^{-2(n-1)-2}\lambda&0&0&\\
 & q^{-2(n-1)-2}e & -q^{-2n-2}\lambda &0& \\
 & 0 & q^{-2n-2}e & -q^{-2(n+1)-2}\lambda &\\
 &&&& \ddots 
\end{pmatrix}\\
\end{align*}

In particular, the action of $K$ is not diagonalizable. Moreover the action of
$K$ shows that no $V(B_{\lambda,\lambda'},\chi)$ has finite-dimensional proper
submodules.

We can determine all submodules with a trick that does not seem to easily
generalize beyond rank one:

\begin{lemma}
The induced $U_q(\sl_2)$-module with respect to a Borel subalgebra $B_{\lambda,\lambda'}$
 $$V(B_{\lambda,\lambda'},\chi):=U \otimes_{B_{\lambda,\lambda'}} \acf_\chi\cong \acf[K,K^{-1}]1_\chi$$
has a nontrivial submodule $V'$ iff for some $n\in\N_0$ 
 $$\chi(E^\phi)=\epsilon q^{n}\cdot \lambda, \text{ equivalently } \chi(F^\phi)=\epsilon q^{-n}\cdot \lambda'$$ 
It is cofinite of codimension $[V:V']=n+1$.
\end{lemma}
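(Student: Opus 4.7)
My plan is to reduce the classification of submodules to a divisibility problem in a Laurent polynomial ring. The key structural observation I will use is that $K$ acts on $V=\acf[K,K^{-1}]\cdot 1_\chi$ as the invertible shift operator, so any $U_q(\sl_2)$-submodule $V'\subseteq V$ is automatically a $\acf[K,K^{-1}]$-submodule. Via the canonical identification $V\cong \acf[K,K^{-1}]$, $V'$ corresponds to an ideal, and since $\acf[K,K^{-1}]$ is a PID, every nonzero submodule has the form $V'=\bigl(p(K)\bigr)\cdot 1_\chi$; after multiplying $p$ by a unit $K^m$ I may normalize so that $p$ is a polynomial of degree $d\ge 0$ with $p(0)\ne 0$, whose roots $a_1,\ldots,a_d\in\acf^\times$ will drive the analysis.

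Second, I would translate $E$- and $F$-invariance of $V'$ into divisibility conditions on $p$. Starting from the explicit formulas displayed just above the lemma, a short computation yields, for any $g\in\acf[K,K^{-1}]$,
\begin{align*}
  F\bigl(g(K)\cdot 1_\chi\bigr)&=(f-\lambda' K^{-1})\,g(q^2K)\cdot 1_\chi,\\
  E\bigl(g(K)\cdot 1_\chi\bigr)&=q^{-2}(eK-\lambda)\,g(q^{-2}K)\cdot 1_\chi.
\end{align*}
Hence $V'$ is a submodule iff both $p(K)\mid(fK-\lambda')\,p(q^2K)$ and $p(K)\mid(eK-\lambda)\,p(q^{-2}K)$ hold in $\acf[K,K^{-1}]$; note that since $ef=\lambda\lambda'\ne 0$, all four of $\lambda,\lambda',e,f$ are nonzero, so these linear factors are genuine.

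The heart of the argument is then a root-level analysis. Writing $p(K)=c\prod_j(K-a_j)^{m_j}$ with pairwise distinct $a_j\in\acf^\times$, the first divisibility says that every $a_j$ either equals $\lambda'/f$ or has a ``successor'' $q^2a_j=a_i$ among the roots, with the quantitative refinement $m_j\le [a_j=\lambda'/f]+m_{i(j)}$. Because $q$ is not a root of unity the map $a\mapsto q^2 a$ acts freely on $\acf^\times$, so the root set partitions into finitely many disjoint chains. An induction from the top of each chain forces the top to equal $\lambda'/f$ and every multiplicity along the chain to be $1$; uniqueness of $\lambda'/f$ then leaves only a single chain. Applying the mirror argument to the second divisibility forces its bottom to equal $\lambda/e$, so the chain is $\lambda/e,\,q^2\lambda/e,\,\ldots,\,q^{2n}\lambda/e=\lambda'/f$ for some $n\ge 0$, with $d=n+1$.

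Combining the identity $\lambda'/f=q^{2n}\lambda/e$ with $ef=\lambda\lambda'$ gives $e^2=q^{2n}\lambda^2$, hence $e=\epsilon q^n\lambda$ and $f=\epsilon q^{-n}\lambda'$ with $\epsilon\in\{\pm 1\}$, which is the asserted necessary condition. Conversely, for any such $(n,\epsilon)$ the polynomial $p(K):=\prod_{k=0}^n(K-q^{2k}\lambda/e)$ satisfies both divisibilities by construction, so $V':=(p(K))\cdot 1_\chi$ is a submodule of codimension $\deg p=n+1$. The main obstacle I expect is the chain analysis in the third paragraph: ruling out multiplicities greater than $1$ and ruling out multiple disjoint chains, both of which rely essentially on $q$ not being a root of unity and on the normalization $p(0)\ne 0$; everything else is routine.
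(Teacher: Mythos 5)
Your proposal is correct and follows essentially the same strategy as the paper: identify $V'$ with a principal ideal $(P)$ in $\acf[K,K^{-1}]$, compute $F.P=(f-\lambda'K^{-1})P(q^2K)$ and $E.P=q^{-2}(eK-\lambda)P(q^{-2}K)$, and translate submodule-invariance into divisibility conditions whose solutions are exactly the geometric chains $\lambda/e,\,q^2\lambda/e,\ldots,q^{2n}\lambda/e=\lambda'/f$. The only real difference is cosmetic: you make explicit the chain-partition and multiplicity-one argument (using freeness of $a\mapsto q^2a$ on $\acf^\times$) and spell out the converse, both of which the paper compresses into the phrase ``for degree reasons \ldots the zeroes of $P$ have to lay in a chain.''
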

\begin{proof}
 A $U_q(\sl_2)$-submodule $W\subset V(B_{\lambda,\lambda'},\chi)\cong \acf[K,K^{-1}]$ is in particular a $\acf[K,K^{-1}]$-submodules under left-multiplication i.e. an ideal (instead of weight-spaces). Since this is a principle ideal ring, there exists a Laurent-Polynomial $P(X)=\sum_{n\in \Z}^{finite} c_nX^n$ with $W=(P),X=K$. This is a submodule iff $E.P$ and $F.P$ are multiples of $P$. We calculate explicitly:
 \begin{align*}
 (F.P)(X)
 &=\sum_n c_n q^{2n}f\cdot X^n-q^{2n}\lambda' \cdot X^{n-1}\\
 &=P(q^2X)\left(f-\lambda' X^{-1}\right)\\
 (E.P)(X)
 &=\sum_n c_n q^{-2n-2}e\cdot X^{n+1}-q^{-2n-2}\lambda \cdot X^{n}\\
 &=P(q^{-2}X)q^{-2}\left(eX-\lambda\right)
 \end{align*}
 For degree reasons, this can only give a multiple of $P$ if for some $a,b,c,d$:
 \begin{align*}
 (aX+b)P(X)&\stackrel{!}{=}P(q^2X)\left(fX-\lambda' \right)\\
 (cX+d)P(X)&\stackrel{!}{=}P(q^{-2}X)q^{-2}\left(eX-\lambda\right)
 \end{align*}
 Either $P(X)$ is constant, then $W$ is the entire module, or the zeroes of $P$ have to lay in a chain $q^{-n}X_0,q^{-n+2}X_0,\ldots q^{n}X_0$ for some $n\in \N_0$, with 
 $q^nX_0=\lambda'/f$ and $q^{-n}X_0=\lambda/e$. 
 The quotient module is thus the $(n+1)$-dimensional ring extension $\acf[K,K^{-1}]/(P)$, where $K$ acts again by left-multiplication.
\end{proof}

We show a different and quite general idea to detect the finite-dimensional quotient modules (i.e. the cofinite submodules) and circumvent the problem of non-diagonalizable $K$-action: To find finite-dimensional quotient modules we need to find elements in $\Hom_{U}(U\otimes_B \acf_\chi)$ and this can be done by decomposing 
 $\Hom_{\acf}(U,\acf)$ into irreducible $L(\lambda)$. This is now possible since $\Hom_{\acf}(U^0,\acf)$ is diagonalizable under left-multiplication ---  morally because it is the closure. The following Lemma verifies explicitly the existence assertion in Lemma \ref{lm_InductionQuotientExistence}. 

\begin{lemma}\label{lm_quotientIndA1}
 The induced $U_q(\sl_2)$-module with respect to a Borel subalgebra $B_{\lambda,\lambda'}$
 $$V(B_{\lambda,\lambda'},\chi):=U \otimes_{B_{\lambda,\lambda'}} \acf_\chi\cong \acf[K,K^{-1}]1_\chi$$
 has a finite-dimensional quotient, namely the irreducible module $L(\epsilon,n)$ of dimension $n+1$ and sign $\epsilon=\pm 1$ for precisely one choice of $\chi$, namely 
 $$\chi(E^\phi K^{-1})=\epsilon q^{n}\cdot \lambda, \text{ equivalently } \chi(F^\phi)=\epsilon q^{-n}\cdot \lambda'$$ 
\end{lemma}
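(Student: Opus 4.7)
The plan is to apply Frobenius reciprocity for the coideal subalgebra inclusion $B_{\lambda,\lambda'}\subset U_q(\sl_2)$, which gives the natural isomorphism
$$\Hom_{U_q(\sl_2)}\bigl(V(B_{\lambda,\lambda'},\chi),\,L(\epsilon,n)\bigr) \;\cong\; \Hom_{B_{\lambda,\lambda'}}\bigl(\acf_\chi,\,L(\epsilon,n)|_{B_{\lambda,\lambda'}}\bigr).$$
Since $L(\epsilon,n)$ is irreducible, any nonzero morphism on the left is automatically surjective, exhibiting $L(\epsilon,n)$ as a quotient of $V(B_{\lambda,\lambda'},\chi)$. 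The lemma therefore reduces to identifying, for each $n\in\N_0$ and sign $\epsilon$, the unique character $\chi$ for which $\acf_\chi$ embeds as a $B_{\lambda,\lambda'}$-submodule of $L(\epsilon,n)$. This realises exactly the dualised viewpoint announced just before the lemma: instead of dissecting the non-diagonalizable induced module $V(B_{\lambda,\lambda'},\chi)$, we look for joint eigenvectors of $\bar E,\bar F$ inside the semisimple finite-dimensional module $L(\epsilon,n)$.

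I would next compute the action of $\bar E,\bar F$ on the standard basis $m_0,\ldots,m_n$ of $L(\epsilon,n)$ using the formulas recalled in the preliminaries, obtaining
$$\bar F m_i = m_{i+1} + \epsilon q^{2i-n}\lambda'\,m_i, \qquad \bar E m_i = q^{2i-n}[i]_q[n+1-i]_q\,m_{i-1} + \epsilon q^{2i-n}\lambda\,m_i.$$
For a candidate eigenvector $v=\sum_{i=0}^n c_i m_i$, imposing $\bar F v=fv$ yields the recursion $c_{j-1}=(f-\epsilon q^{2j-n}\lambda')c_j$, whose boundary equation at $j=0$ forces either $c_0=0$ or $f=\epsilon q^{-n}\lambda'$. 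An analogous analysis of $\bar Ev=ev$ at $j=n$ forces either $c_n=0$ or $e=\epsilon q^n\lambda$. Since $[k]_q\neq 0$ for generic $q$ and $1\leq k\leq n$, a short inductive use of each recursion rules out the degenerate cases $c_0=0$ and $c_n=0$, so both eigenvalues are pinned down to the values claimed in the statement. Consistency of the two conditions $\bar Ev=ev$ and $\bar Fv=fv$ is automatic from the quantum Weyl algebra relation $\bar E\bar F-q^2\bar F\bar E=\frac{q^2}{q-q^{-1}}$, which forces $ef=\lambda\lambda'$ and is matched by $\epsilon q^n\lambda\cdot\epsilon q^{-n}\lambda'=\lambda\lambda'$.

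These steps produce exactly one one-dimensional $B_{\lambda,\lambda'}$-submodule of $L(\epsilon,n)$ carrying the character $\chi$ of the statement, and Frobenius reciprocity then delivers the desired quotient map for precisely that $\chi$. The main (mild) obstacle is the bookkeeping of the two recursions and verifying nonvanishing at both endpoints; the computation is routine but slightly fiddly. As an independent cross-check I would invoke the preceding lemma, which pins down the same $\chi$ as the unique character admitting a cofinite submodule of codimension $n+1$, and then identify the resulting $(n+1)$-dimensional quotient with $L(\epsilon,n)$ by observing that the minimal polynomial of $K$ on the quotient has roots $\epsilon q^{-n},\epsilon q^{-n+2},\ldots,\epsilon q^{n}$, matching the $K$-spectrum of $L(\epsilon,n)$.
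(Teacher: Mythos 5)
Your proof is correct and takes essentially the same route as the paper: the coordinates $c_i$ of your joint eigenvector are exactly the paper's $\phi_k(1_\chi)$, and your recursions and endpoint conditions at $j=0,n$ coincide with the formulas the paper derives at $k=-1,n$. Framing the reduction explicitly via Frobenius reciprocity is a nice clarification of the informal ``dualization'' remark the paper makes just before the lemma; as in the paper, existence of the joint eigenvector (that the unique $\bar F$-eigenvector with $c_0\neq 0$ is simultaneously an $\bar E$-eigenvector) is not derived from the recursions themselves, but your cross-check against the preceding lemma, or an appeal to Lemma~\ref{lm_InductionQuotientExistence}, closes that gap in the same way the paper's closing remark does.
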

\begin{proof}
 Let $g:V(B_{\lambda,\lambda'},\chi)\to L(\epsilon, n)$ be a (nonzero) module homomorphism to the irreducible highest weight module with highest weight vector $v_0$ for highest weight $Kv=q^nv$.
 It has a basis $v_0,\ldots v_n$ with $Kv_i=\epsilon q^{n-2i}v_i$. 
 Denote the coefficients of $g$ in this basis by $g_k,\;k=0\ldots n$,. Then by definition
 $$\epsilon q^{n-2k}g_k(K^i 1_\chi)=K.g_k(K^i1_\chi)=g_k(K^{i+1}1_\chi)$$
 Thus $g$ is fixed by its image $g_k(1_\chi)$ via
 $$g_k(K^i1_\chi)=\epsilon^i q^{(n-2k)i}g_k(1_\chi)$$
 and since the map should be surjective we need all $g_k(1_\chi)\neq 0$ for $k=0\ldots n$.\\
 
 On the other hand, the action of $F$ and $E$ demands:
 \begin{align*}
 g{k-1}(K^i1_\chi)
 &=g_k(F.K^i1_\chi)\\
 &=q^{2i}fg_k(K^i1_\chi)-q^{2i}\lambda'\chi_k(K^{i-1}1_\chi)\\
 &=q^{2i}\left(\epsilon^{i}q^{(n-2k)i}f-\epsilon^{i-1}q^{(n-2k)(i-1)}\lambda'\right)g_{k}(1_\chi)\\
 \epsilon[k+1]_q[n-k]_q g_{k+1}(K^i1_\chi)
 &=g_k(E.K^i1_\chi)\\
 &=q^{-2i-2}e g_k(K^{i+1}1_\chi)-q^{-2i-2}\lambda g_k(K^{i}1_\chi)\\
 &=q^{-2i-2}\left(\epsilon^{i+1}q^{(n-2k)(i+1)}e-\epsilon^i q^{(n-2k)i}\lambda'\right)g_{k}(1_\chi) 
 \end{align*}
 and the boundary conditions of the first resp. second equation for $k=0$ resp. $k=n$ that the right-hand side has to be zero
 \begin{align*}
 0&= \epsilon f-q^{-n}\lambda'\\
 0&= \epsilon q^{-n}e-\lambda
 \end{align*}
 which is the asserted condition. One could also derive direct formulae for $g_k(K^i 1_\chi)$ and in this case it is easy to see that $g$ above is indeed a module homomorphism.
 
\end{proof}

\section{The structure of the graded algebra of a right
coideal subalgebra}

\subsection{Conjecture A}

By Theorem~\ref{HK11b_shifted}, the right coideal subalgebras $C$ of $U^{\leq 0}$ with $C\cap U^0$ a Hopf algebra are
character shifts $\Uminus{w}_{\phi}\acf[L]$ with $L\subset \Lambda$ orthogonal
to $\supp(\phi)$. These character shifts are not $\Lambda$-graded in general. To further understand these algebras and their representation theory, we propose to study their associated graded algebras, which are again right coideal subalgebras. As a first step, we present and prove in many cases the following conjecture, that states morally that up to localization (which enforces that again  $\gr(C)\cap U^0$ is a Hopf algebra), the graded algebra is given by $\gr(\Uminus{w}_\phi)=\Uminus{w'}$, 
with an explicit formula of $w'$. We now give a precise statement: \\

Fix some $w\in W$ and some character $\phi$ of $\Uminus{w}$ and consider the right coideal subalgebra $\Uminus{w}_{\phi}$. We call  the following the \emph{standard $\Z$-grading} of $U$:
$$\deg(E_{\alpha_i})=1, \quad \deg(K_{\alpha_i})=0,\quad \deg(F_{\alpha_i})=-1$$
Consider the graded algebra $\gr(\Uminus{w}_{\phi})$ with respect to the standard $\Z$-grading. For any nonzero coset in the graded algebra,  the term in leading degree is independent of the choice of a coset representative. This gives rise to a canonical algebra and comodule map
$$f:\gr(\Uminus{w}_{\phi})\to U^{\leq0}$$ 
In particular, the image $D$ of $f$ is a $\mathbb{Z}$-graded right coideal subalgebra of $U^{\leq0}$. Since subalgebras give rise to subalgebras in the graded algebra, $D^0=D\cap U^0$ is the span of an abelian semigroup
$$G(D^0)=\langle K_{\mu}^{-1} \mid \mu \in \supp(\phi)\rangle$$
We denote by $\tilde{G}$ its quotient group or localization. As localization of $D$, we consider $\acf[\tilde{G}]D$, which is by Theorem \ref{HK11b_nonshifted} of the form $\acf[\tilde{G}]\Uminus{w'}$ for some Weyl group element $w'$ (see Proposition \ref{prop_localization}), for which we now conjecture an explicit formula:\\

\begin{conjectureA}\label{conj_A}
For any $w\in W$ and character $\phi:\Uminus{w}\to \acf$, the localization of the graded algebra $D$ of $\Uminus{w}_{\phi}$ is given by
 $$\acf[\tilde{G}]D=\acf[\tilde{G}]\Uminus{w'}.$$
for the explicit Weyl group element of length $\ell(w)-|\supp(\phi)|$
$$w'=(\prod_{\beta\in \supp(\phi)}s_\beta)w$$
\end{conjectureA}
Note that the roots $\beta\in\supp(\phi)$ in Theorem \ref{HK11b_shifted} are pairwise orthogonal, so the elements $w_\beta$ commute and the definition of $w'$ does not depend on the order of the product. Note that the element $w'$ appears in \cite[(3.6)]{HK11b}.\\

%

 We first give some examples and formulate the consequence for triangular coideal subalgebras.

\begin{example}
 Clearly, for $\phi=\varepsilon$ the right
 coideal subalgebra $\Uminus{w}_{\phi}$ is $\Lambda$-graded and in particular $\Z$-graded, and we have $w'=w$.
\end{example}
\begin{example}\label{exm_gradedsl2}
Let $\g=\sl_2$, consider $\Uminus{s_1}=\langle F\rangle$, and consider for the character $\phi(F)=\lambda'$ the character shift 
$$\Uminus{s_1}_\phi=\langle F+\lambda'K^{-1}\rangle$$
Then for $\lambda'\neq 0$ the leading term is $\lambda'K^{-1}$ an thus the graded algebra is as conjectured with $w=s_{1},w'=1$
 \begin{align*}
 \gr(\Uminus{s_1}_\phi)\cong \Uminus{1}\acf[K^{-1}]&=\langle K^{-1} \rangle
 \end{align*}
\end{example}
\begin{example}\label{exm_A2ConjectureA}
 Let $\g=\sl_3,w=w_0=s_1s_2s_1,\,w'=s_1w=s_2s_1$ and
 $\phi(F_1)=\lambda'$, $\supp(\chi)=\{\alpha_1\}$. Then $\Uminus{w}_\phi=\langle
 F_1+\lambda'K_1^{-1},F_2 \rangle$. The leading degree terms of the character shift
 $F^\phi_1=F_1+\lambda'K_1^{-1}$ is $\lambda'K_1^{-1}$, and
 $F^\phi_2=F_2$. Typically, character shifts of elements of $\Lambda$-degree $-(\alpha_1+\alpha_2)$ have leading degree terms
 in degree $-\alpha_2$:
 \begin{align*}
 {F_1F_2}^\phi=F_1F_2+\lambda' K_1^{-1}\cdot F_2\\
 {F_2F_1}^\phi=F_2F_1+F_2\cdot \lambda' K_1^{-1}
 \end{align*}
 However, in a suitable linear combination of these two elements, the leading degree terms of the
 character shifts cancel and we again land in $\Lambda$-degree $-(\alpha_1+\alpha_2)$:
 $$(F_1F_2-q^{-1}F_2F_1)^\phi=F_1F_2-q^{-1}F_2F_1$$
 Thus the graded algebra is as conjectured, with $w'=s_1w=s_1s_2$: 
 $$f:\;\gr\;\Uminus{s_1s_2s_1}_\phi \cong \Uminus{s_2s_1}\acf[K_1^{-1}]$$
\end{example}

In contrast, for a right coideal subalgebra in the positive part $U^0S(U^+)$, the
graded right coideal subalgebra has the same degrees, i.e. by choice of our grading the leading terms of the character shifted $E_\alpha^\phi$ are simply the $E_\alpha$ themselves.
$$f:\gr(\Uplus{w}_{\phi})\stackrel{\sim}{\longrightarrow} \Uplus{w}$$


The two statements can be easily combined:

\begin{corollary}\label{cor_conjA}
Assume that Conjecture A holds for $\Uminus{w_-}_{\phi_-}$ i.e. that for the localization of $\gr(\Uminus{w_-}_{\phi_-})$ holds:
\begin{align*}
\acf[\tilde{G}]\;\gr(\Uminus{w_-}_{\phi_-})&\cong\acf[\tilde{G}]\;\Uminus{w'} 
\end{align*}
Then for any triangular right coideal subalgebra $C$ with $C\cap U^0$ a Hopf algebra, the map sending elements to their leading degree terms gives a description of the localization of $\gr(C)$ as follows: 
\begin{align*}
\acf[\tilde{G}]\;\gr\left(\Uminus{w_-}_{\phi_-}\;\acf[L]\;S(\Uplus{w_+})_{\phi_+}\right)
&\cong \Uminus{w'_-}\;\acf[\tilde{G}L]\;S(\Uplus{w_+}) 
\end{align*}
\end{corollary}
\begin{example}
Consider our example $B_{\lambda,\lambda'}$, which has a triangular decomposition as in Corollary \ref{cor_triangular} for parameters $w_-=w_+=s_1$
$$B_{\lambda,\lambda'}=\Uminus{s_1}_{\phi_-}\acf[1]S(\Uplus{s_1})_{\phi_+}=\langle F+\lambda'K^{-1},EK^{-1}+\lambda F^ {-1}\rangle$$
The leading term in the negative part is $\lambda'K^{-1}$ according to Conjecture A, as discussed in Example \ref{exm_gradedsl2} above. The leading term in the positive part is as always the unshifted element $EK^{-1}$.   Altogether, the graded algebra of $B_{\lambda,\lambda'}$ is
 \begin{align*}
 \gr(B_{\lambda,\lambda'})=\Uminus{1}\acf[K^{-1}]S(\Uplus{s_1})&=\langle K^{-1},EK^{-1} \rangle
 \end{align*} 
which is up to localization, i.e. adding the generator $K$, the upper Borel part $U^{\geq 0}$.
\end{example}
This description leads us to the question and a conjecture for which choice of $(w_+,\phi_+,w_-,\phi_-)$ the construction produces a \emph{basic} right coideal subalgebra. We formulate this in the next section as Conjecture B and we will prove one direction, namely a criterion which choice leads to a non-basic right basic coideal subalgebra. Let us briefly sketch how we arrive at the conjecture and what is the big picture we would ultimately hope for. First, we find it natural to expect the basicness of the right coideal subalgebra to be closely related to the basicness of the graded localized right coideal subalgebra $\Uminus{w_-'}\;\acf[\tilde{G}L]\;S(\Uplus{w_+})$. Second, we find it natural to expect that such graded coideal subalgebras are basic iff $\ell(w'_-)+\ell(w_+)=\ell(w'_-w_+)$, which in the classical Lie algebra picture should mean that the semisimple part of the Levi decomposition is trivial. We would be thankful for remarks or references on general statements in either of these questions.

\subsection{Localization}

Let in this section $A$ be a $\Z$-filtered right coideal subalgebra of $U$. A technical difficulty, which we address in this section is that $A'=\gr(A)$ has in general $A'^0=A'\cap U^0=\acf[G]$ where  $G\subset \Z^k$ is just a semigroup. Such a $\Z$-graded right coideal subalgebra $A'$ could be not even $\Z^k$-graded. Differently spoken, it is not necessarily a free module over the semigroup $G$. 

\begin{example}\label{exm_nongraded}
 Consider the subalgebra in $U_q(\sl_3)^-$ generated as: 
 $$C = \langle F_1 K_3^{-1} + F_3 K_1^{-1},K_3^{-1},K_1^{-1}\rangle$$
 Computing the coproduct of each generator quickly shows that these elements generate a right coideal subalgebra. To compute the generated algebra, note that multiplication with $K_1^{-1}$ or $K_3^{-1}$ from either side commutes differently with $F_1,F_3$, so the following elements are also contained in $C$ as suitable linear combinations 
 $$C \ni F_1K_3^{-1}K_1^{-1},\,F_3K_1^{-1}K_3^{-1}$$
 However, the individual summands $F_1K_3^{-1}$ and $F_3K_1^{-1}$ of the initial generator are not contained in $C$. 
\end{example}

These $\Z$-graded right coideal subalgebras $A'$ are not covered by the classification Theorem \ref{HS09}. 
We shall not try to classify $A'$ directly, but study its localization:
\begin{proposition}\label{prop_localization}
 Let $A'\subset U^{\leq 0}$ be a right coideal subalgebra that is graded with respect to the standard $\Z$-grading in this section. Define the semigroup $G$ by $A'^0=A'\cap U^0=\acf[G]$, then $\acf[G]A'=A'\acf[G]$, and thus we can define a localization in the noncommutative setting \cite[Prop. 1.4]{Sten75}: 
 
 Let $\tilde{G}\subset \Z^k$ be the quotient group (localization) of $G$. Then $\tilde{A'}:=\acf[\tilde{G}]A'=A'\acf[\tilde{G}]$ is again a right coideal subalgebra and there exists some $w'\in W$ with 
 $$\tilde{A'}=\acf[\tilde{G}]\Uminus{w'}$$ 
\end{proposition}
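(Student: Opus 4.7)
The plan is to realize $\tilde{A'}$ as a noncommutative Ore localization of $A'$ at the multiplicative set $G$, verify that the coalgebra structure extends automatically, and finally apply Theorem~\ref{HK11b} to produce the Weyl group element $w'$.

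First, I would argue that $A'$ is in fact $\Lambda$-graded, not merely $\Z$-graded: for any $x \in A'$, the image $\Delta(x) \in A' \otimes U^{\leq 0}$ can be decomposed along the $\Lambda$-grading of the second tensor factor, and pairing with suitable weight-space projections (using the characters $K_\mu \mapsto q^{(\mu,\cdot)}$) recovers the $\Lambda$-homogeneous components of $x$ as elements of $A'$. Having $\Lambda$-gradedness in hand, for every $K_\mu \in G$ and every $\Lambda$-homogeneous $a \in A'_\nu$, the commutation $K_\mu a = q^{-(\mu,\nu)} a K_\mu$ gives $K_\mu A' = A' K_\mu$. Since $G$ is a commutative monoid whose generators all normalize $A'$ in this way, $G$ is a left and right Ore subset of $A'$, and I can form the noncommutative Ore localization $\tilde{A'} := A'[G^{-1}] = \acf[\tilde G] A' = A' \acf[\tilde G]$ as an associative algebra inside $U^{\leq 0}$.

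Second, I would check that $\tilde{A'}$ is a right coideal subalgebra of $U^{\leq 0}$: every $K_\mu \in \tilde G$ is grouplike, so $\Delta(\acf[\tilde G]) \subset \acf[\tilde G] \otimes \acf[\tilde G]$, and for $k \in \acf[\tilde G]$, $a \in A'$,
$$\Delta(ka) = \Delta(k)\Delta(a) \in (\acf[\tilde G] \otimes \acf[\tilde G])(A' \otimes U^{\leq 0}) \subseteq \tilde{A'} \otimes U^{\leq 0}.$$
By construction the Cartan part $\tilde{A'}^0 = \acf[\tilde G]$ is now a genuine group algebra, hence a Hopf subalgebra of $U^0$. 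I would then apply Theorem~\ref{HK11b} to $\tilde{A'} \subset U^- U^0$: this gives $\tilde A' = U^-[w']_{\phi'} \acf[L']$ for some $w' \in W$, some character $\phi'$ on $U^-[w']$, and some subgroup $L' \subset \supp(\phi')^\perp$. Matching Cartan parts, $\acf[\tilde G] = (U^-[w']_{\phi'})^0 \acf[L']$, which forces $L' = \tilde G$ and places the $K_\beta^{-1}$ for $\beta \in \supp(\phi')$ into $\acf[\tilde G]$. Subtracting $\phi'(F_\beta) K_\beta^{-1}$ from each character-shifted generator $\bar F_\beta = F_\beta + \phi'(F_\beta) K_\beta^{-1}$ then yields $F_\beta \in \tilde A'$, so the character shift is absorbed into the Cartan part and one obtains the desired presentation $\tilde A' = \acf[\tilde G] U^-[w']$.

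The hard part will be the rigorous passage from $\Z$-gradedness to $\Lambda$-gradedness in the first step; this is not immediate and requires using the coideal property together with the fact that distinct $\Lambda$-weights are separated by the characters of $U^0$. Once $\Lambda$-gradedness is secured, the Ore localization, the extension of the coalgebra structure, and the final appeal to Theorem~\ref{HK11b} are all essentially bookkeeping.
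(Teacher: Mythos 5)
Your first step --- that a $\Z$-graded right coideal subalgebra $A'\subset U^{\leq 0}$ is automatically $\Lambda$-graded --- is false, and the paper's own Example~\ref{exm_nongraded} is a direct counterexample. There, $A'=\langle F_1K_3^{-1}+F_3K_1^{-1},\,K_1^{-1},\,K_3^{-1}\rangle$ is generated by $\Z$-homogeneous elements and is a right coideal subalgebra, yet its degree-$(-1)$ generator mixes the $\Lambda$-weights $-\alpha_1$ and $-\alpha_3$, and neither summand $F_1K_3^{-1}$ nor $F_3K_1^{-1}$ lies in $A'$. The separation device you invoke does not work for a \emph{right} coideal subalgebra: the convolution $(\chi*\mathrm{id})(x)=\chi(x_{(1)})\,x_{(2)}$ would indeed filter the $\Lambda$-weight of $x$, but its output $x_{(2)}$ lives in $U^{\leq 0}$, not in $A'$; whereas $(\mathrm{id}*\chi)(x)=x_{(1)}\,\chi(x_{(2)})$ does preserve $A'$, but on the element above it yields $\chi(K_1^{-1}K_3^{-1})\cdot(F_1K_3^{-1}+F_3K_1^{-1})$, a scalar multiple, and separates nothing. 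Consequently your normality claim $K_\mu A'=A'K_\mu$ fails as well --- in the same example one checks $K_1^{-1}A'\neq A'K_1^{-1}$, since $q^2F_1K_1^{-1}K_3^{-1}+F_3K_1^{-2}\in K_1^{-1}A'$ but membership in $A'K_1^{-1}$ would force $F_1K_3^{-1}\in A'$. This collapses the Ore-localization step as you set it up. The Ore condition can be salvaged by a different route: for $a\in A'$ and $K\in G$, the elements $K^ja$ all lie in $A'$, and a Vandermonde argument applied to $K^jaK^{-j}$ shows each grouped $\Lambda$-weight component of $a$ lies in $A'\acf[\tilde G]$, whence $\acf[\tilde G]A'=A'\acf[\tilde G]$ --- but that is a genuinely different argument from yours, and it must be made since $\Lambda$-gradedness is unavailable.

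Your remaining steps are in the right spirit but end with a gap. After Theorem~\ref{HK11b} gives $\tilde A'=U^-[w']_{\phi'}\acf[L']$, you absorb the shift by subtracting $\phi'(F_\beta)K_\beta^{-1}$ from $\bar F_\beta$; this establishes $F_\beta\in\tilde A'$ only for the \emph{simple} roots $\beta\in\supp(\phi')$, and you still owe an argument that the shifted root vectors of non-simple roots in $\Phi^+(w')$ can likewise be unshifted inside $\tilde A'$. The paper's own finish is cleaner and closes this gap for free: $\tilde A'=\acf[\tilde G]A'$ is $\Z$-graded because $A'$ is and $\acf[\tilde G]$ sits in degree zero, so every $\Z$-homogeneous component of every character-shifted PBW generator already lies in $\tilde A'$, forcing the character shift to be trivial all at once and yielding $\tilde A'=\acf[\tilde G]\,U^-[w']$.
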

\begin{proof}
 Since $A'$ is an algebra we have $KA'=A'K$ for $K\in G$, 
 which implies the first claim.
 Now, $\tilde{A'}$ is $\Z$-graded and $\tilde{A'}^0=\acf[\tilde{G}]$ is a group ring. So it fulfills the conditions of Theorem \ref{HK11b_shifted} and is thus a character shift. But since they are $\Z$-graded, the character shift has to be trivial, and hence $\tilde{A'}$ is as asserted, in particular $\Z^n$-graded.
\end{proof}

\begin{proposition}\label{prop_highpower}
 Let $G\subset \Z^k$ be a semigroup (or any other abelian cancellative semigroup) and let $\tilde{G}$ be its group of fractions. Let $\tilde{V}$ be a $\tilde{G}$-module and $V$ be a $G$-submodule, such that acting with $\tilde{G}$ generates the entire module $\tilde{G}V=\tilde{V}$. Then for any $v\in \tilde{V}$ there exists a $K\in G$ such that $Kv\in V$.
\end{proposition}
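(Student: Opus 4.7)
The plan is to reduce the problem to clearing denominators in a finite sum. First I would unpack the hypothesis $\tilde{G}V=\tilde{V}$: given $v\in\tilde{V}$, write $v$ as a finite sum $v=\sum_{i=1}^{n} t_i v_i$ with $t_i\in\tilde{G}$ and $v_i\in V$. Since $G$ sits inside an ambient abelian group (in our applications the root lattice $\Lambda\cong\Z^n$), $\tilde{G}$ is precisely the group of fractions $\{gh^{-1}\mid g,h\in G\}$. So each $t_i$ can be written as $t_i=g_i h_i^{-1}$ with $g_i,h_i\in G$.

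Next I would produce a single element of $G$ that simultaneously absorbs all the $h_i^{-1}$. Set $K:=h_1h_2\cdots h_n\in G$, which lies in $G$ because $G$ is closed under the semigroup operation. Since $G$ is commutative (being a subsemigroup of an abelian group), we have
\[
Kt_i \;=\; h_1\cdots h_n\cdot g_i h_i^{-1}
\;=\; g_i\cdot\prod_{j\neq i}h_j,
\]
and the right-hand side is a product of elements of $G$, hence lies in $G$. Therefore
\[
Kv \;=\; \sum_{i=1}^{n}(Kt_i)\,v_i \;=\; \sum_{i=1}^{n}\Bigl(g_i\prod_{j\neq i}h_j\Bigr)v_i,
\]
and since each summand is an element of $G$ acting on an element of the $G$-submodule $V$, we conclude $Kv\in V$.

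The only step that needs care is the identification of $\tilde{G}$ with the group of fractions of $G$, which for us is automatic: $G\subset\Lambda$ with $\Lambda$ free abelian, so $G$ is cancellative and its localization inside $\Lambda$ is literally $\{gh^{-1}:g,h\in G\}$. All other steps are formal manipulations using commutativity of $G$ and the fact that $V$ is stable under $G$. No additional structure of the ambient $\tilde{A'}$ or of $U_q(\g)$ enters; the statement is purely a lemma about modules over a cancellative commutative semigroup and its group of fractions, which is the main content to emphasize in the proof.
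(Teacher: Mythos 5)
Your proof is correct and follows essentially the same route as the paper: write $v$ as a finite $\tilde{G}$-combination of elements of $V$, clear the denominator of each coefficient by an element of $G$, and take the product of those clearing factors as the single $K$. The paper phrases the last step as picking $K$ in the intersection $\bigcap_i K_{(i)}G$, suggesting the product as one choice, which is exactly your $K=h_1\cdots h_n$.
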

\begin{proof}
 Since we assumed $\tilde{G}V=\tilde{V}$, there exist finitely many elements $K^{(i)}\in \tilde{G}$ and vectors $v^{(i)}\in V$ such that 
 $$\sum_{i=1}^n K^{(i)} v^{(i)}=v$$
 Now by definition of $\tilde{G}$ there exists for each element $K^{(i)}$ an element $K_{(i)}\in G$ with $K_{(i)}K^{(i)}\in G$. Taking some $K\in G$ in $\bigcap_i K_{(i)}G$ (for example their product) proves the assertion. 
\end{proof}
Applying this to $\acf[\tilde{G}]A'=\acf[\tilde{G}]\Uminus{w'}$ we get the following easy consequence, which will be useful later on, and which is illustrated again by Example \ref{exm_nongraded}:
\begin{corollary}
 For any root $\mu\in \Phi(w')$ and a reduced expression for $w'$, there exists a $K\in G$, such that $KF_\mu\in A'$. 
\end{corollary}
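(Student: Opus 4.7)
The plan is to invoke Proposition \ref{prop_highpower} directly, with $\tilde{V}:=\tilde{A'}$, $V:=A'$, and $v:=F_\mu$.

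First I would argue that $F_\mu\in \tilde{A'}$. By the negative-part analogue of Theorem \ref{HS09}, once we fix a reduced expression of $w'$, every root $\mu\in\Phi^+(w')$ is one of the enumerated $\beta_k$, and the corresponding root vector $F_\mu$ (defined via Lusztig's automorphism) lies in $U^-[w']$. Since $U^-[w']\subset \acf[\tilde{G}]U^-[w']=\tilde{A'}$ by the previous proposition's identification of the localization, we immediately get $F_\mu\in\tilde{A'}$.

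Next I would verify the hypotheses of Proposition \ref{prop_highpower}. The subalgebra $A'\subset\tilde{A'}$ is stable under left-multiplication by $G$, since $G\subset A'^0\subset A'$ and $A'$ is an algebra; so $A'$ is a $G$-submodule of the $\tilde{G}$-module $\tilde{A'}$. The identity $\acf[\tilde{G}]A'=\tilde{A'}$ holds by the very definition of $\tilde{A'}$, which in the finite-sum language used in the proof of the proposition is precisely the hypothesis $\tilde{G}V=\tilde{V}$. Applying the proposition to $v=F_\mu$ then yields an element $K\in G$ with $KF_\mu\in A'$, as claimed.

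The argument is essentially a formal consequence of the preceding proposition together with the structure theorem for homogeneous coideal subalgebras of $U^-$; no real obstacle arises. The only subtle point to make explicit is why $F_\mu$ sits in $\tilde{A'}$ to begin with, which is what lets us feed the element into Proposition \ref{prop_highpower} and recover a denominator $K\in G$ clearing it into $A'$.
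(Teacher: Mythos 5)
Your proposal is correct and follows exactly the route the paper intends: the corollary is stated right after Proposition \ref{prop_highpower} with the remark ``Applying this to $\acf[\tilde{G}]A'=\acf[\tilde{G}]U^-[w']$'', and your argument simply spells out that application, noting $F_\mu\in U^-[w']\subset\tilde{A'}$ and checking the hypotheses. No gap; this matches the paper's proof.
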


\subsection{Growth conditions}

Recall that we are in the situation that for every ${w\in W}$ and every character $\phi$ we have, after localization, by Proposition \ref{prop_localization},
$$\acf[\tilde G]\,\gr(\Uminus{w}_\phi)=\acf[\tilde{G}]\,\Uminus{w'}$$
for some $w'\in W$ and semigroup $G\subset \Z^n$, and the content of Conjecture A is an explicit formula for $w'$.\\

 The goal of this section is to compute the length of the unknown Weyl group element $w'$ by comparing the growth of both sides of the isomorphism above resp. the number of PBW-generators. It would be desirable to argue here with a well-established concept such as Gelfand-Kirillov dimension. The Gelfand-Kirillov dimension is invariant under the operation of passing to the graded algebra \cite{MS89} (under suitable conditions, which hold in our setting) and invariant under the operation of passing to the noncommutative localization \cite{LMO88} (the conditions in this source however do not quite hold in our infinite-dimensional setting). The result of this subsection would similarly follow from such a line of argument. 
Since however our reasoning does not depend on the algebra structure but is solely a growth condition on the filtered vector space, we decide to not pursue the Gelfand-Kirillov argument and rather argue in a more ad-hoc fashion:

\begin{definition}
 Let $V$ be an $\N$-filtered vector space with $V_n/V_{n-1}$ finite-dimensional. Let $\mathcal{H}(V,z)=\sum_{n\geq 0} \dim(V_n/V_{n-1})z^n$ be the Hilbert series, then we define  $\mathrm{growth}(V)$ to be the value $a$ such that 
 $$0<\mathrm{lim\;inf}_{z\to 1} (1-z)^a\mathcal{H}(V,z)
 \quad\text{and}\quad \mathrm{lim\;sup}_{z\to 1} (1-z)^a\mathcal{H}(V,z) < +\infty$$
 If the series does not converge to an analytic function in a vicinity of $z=1$ or the above condition is not met for any $a$, we say the growth is undefined. 
\end{definition}
The sole purpose of this definition is that $V^{sub}\subset V \subset V^{sup}$ with well-defined value $\mathrm{growth}(V^{sub})=\mathrm{growth}(V^{sup})=:a$ implies an intermediate well-defined $\mathrm{growth}(V)=a$. Note that $\mathcal{H}(V,z)$ needs not be a rational function, even if $\mathcal{H}(V^{sub},z),\mathcal{H}(V^{sup},z)$ are; note further that the values of the limits need not coincide. 

\begin{example}
 If $A'$ is a graded algebra with a basis of sorted monomials in generators $x_1,\ldots x_\ell$ of degree $d_1,\ldots d_\ell \geq 1$, then $\mathrm{growth}(V)=\ell$, since explicitly
 $$\mathcal{H}(A',z)=\prod_{i=1}^\ell \frac{1}{1-z^{d_i}}$$
\end{example}
We now apply this notion and result to our $A=\Uminus{w}_\phi$ and $A'=\gr(A)$ in $U^-$. Technically, we need to consider a \emph{modified grading}, which has finite-dimensional homogeneous components, but equal behavior with respect to character shifts: Define the modified degree $\mathrm{mdeg}(-)=\deg(-)+\mathrm{sdeg}(-)$ by adding a second degree $\mathrm{sdeg}(F_i)=\mathrm{sdeg}(K_i^{\pm 1})=-1$, so $\mathrm{mdeg}(F_i)=-2$ and $\mathrm{mdeg}(K_i^{\pm 1})=-1$. Note that in this finer grading, the algebra $A'$ need not be graded anymore. Clearly, the homogeneous components of $\mathrm{mdeg}(-)$ on $U^-$ are finite-dimensional. Since characters shifts are graded with respect to $\mathrm{sdeg}(-)$, this new definition does not change the vector space $\gr(A)$:
$$\gr_{\deg(-)}(A)=\gr_{\mathrm{mdeg}(-)}(A)$$
\begin{example}\label{exm_somegrowths}
 Let $A=\Uminus{w}$ with the modified grading, then ${\mathrm{growth}(A)=\ell(w)}$. More generally, let $G$ be a semigroup with $\tilde{G}=\Z^k$ and take $A'=\acf[G]\Uminus{w'}$, then $\mathrm{growth}(A')=\rank{\tilde{G}}+\ell(w')$. 
\end{example}

We now need to compare the growth of $A=\Uminus{w}_\phi$, or equivalently $A'=\gr(A)$, to its localization $\acf[\tilde{G}]A'=\acf[\tilde{G}]\Uminus{w'}$: 


\begin{lemma}
 Let $V$ be a $\Z$-filtered module over $A'$ with the filtration in this section. Then we have for the localization $\mathrm{growth}(V)=\mathrm{growth}(\acf[\tilde{G}]V)$. 
\end{lemma}
\begin{proof}
 Let $\tilde{V}=\tilde{G}V$ as in Proposition \ref{prop_highpower}, which is a free $\acf[\tilde{G}]$-module. Let $I$ be a $\tilde{G}$-basis of $\tilde{V}$, then by Proposition \ref{prop_highpower} there are elements $K_{(i)}\in G$ with $K_{(i)}v_i\in V$ and we define its span to be $V^{sub}\subset V$. On the other hand, we may consider $V^{sup}=\tilde{V} \supset V$.

 Then we have an inequality by definition 
 $$\mathrm{growth(V^{sub})}\leq \mathrm{growth(V)} \leq \mathrm{growth(V^{sup})}$$
 but in our case $\mathrm{growth(V^{sub})}=\mathrm{growth(V^{sup})}$, which proves the assertion
\end{proof}

\begin{corollary}\label{cor_samesize}
 Let now again $A:=\Uminus{w}_{\phi}$ and $A':=\gr(A)$ and the localization $\acf[\tilde{G}]A'=\acf[\tilde{G}]\Uminus{w'}$. Then comparing the growth gives
 $$\ell(w)=\rank(\tilde{G})+\ell(w')$$
\end{corollary}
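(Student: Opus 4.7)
My plan is to chain together the growth computations already set up in the paper: compute $\mathrm{growth}(A)$ directly, transport it through the passage to $\gr(A)$, then through the localization, and finally read off the growth of the normal form $\acf[\tilde{G}]U^-[w']$.

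First I would verify that $\mathrm{growth}(A) = \ell(w)$ under the modified grading $\mathrm{mdeg}$. The algebra $A = U^-[w]_\phi$ is spanned by sorted monomials in the character-shifted root vectors $\bar{F}_{\beta_1},\dots,\bar{F}_{\beta_{\ell(w)}}$ indexed by $\Phi^+(w)$, because the character shift is a filtered deformation of $U^-[w]$ (the PBW theorem survives since character shifts produce no new relations, only lower-$\mathrm{sdeg}$ terms). Each $\bar{F}_{\beta_i}$ has $\mathrm{mdeg} = -2\,\mathrm{ht}(\beta_i)$, all negative integers, so by the Example on polynomial generators, $\mathrm{growth}(A) = \ell(w)$.

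Next I would observe that passing to the associated graded does not change the Hilbert series: the modified degree is preserved, so $\mathrm{growth}(A') = \mathrm{growth}(\gr_{\mathrm{mdeg}}(A)) = \mathrm{growth}(A) = \ell(w)$, using the identification $\gr_{\deg}(A) = \gr_{\mathrm{mdeg}}(A)$ stated just above Example \ref{exm_somegrowths}. I then apply the preceding Lemma to $V = A'$, viewed as an $\N$-graded $G$-module, to conclude
\[
\mathrm{growth}(\acf[\tilde{G}]A') = \mathrm{growth}(A') = \ell(w).
\]

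Finally, $\acf[\tilde{G}]A' = \acf[\tilde{G}]U^-[w']$, and by Example \ref{exm_somegrowths} the right-hand side has growth $\rank(\tilde{G}) + \ell(w')$. Equating the two expressions gives the claimed identity $\ell(w) = \rank(\tilde{G}) + \ell(w')$.

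The main technical obstacle I expect is the first step: rigorously showing that $U^-[w]_\phi$ really has a PBW-type basis of the asserted size with respect to the modified grading, so that its growth is genuinely $\ell(w)$ rather than something smaller (it could a priori be smaller if some character-shifts collapse to already-present elements). This should follow because character-shifting is an algebra isomorphism onto its image that respects $\mathrm{sdeg}$-filtration, so dimensions of graded pieces are preserved; all other steps are direct applications of the preceding Lemma and Example.
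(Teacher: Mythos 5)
Your chain is exactly the one the paper has set up: transfer the Hilbert series to $A'=\gr(A)$, apply the preceding localization Lemma, and read off $\rank(\tilde G)+\ell(w')$ from Example~\ref{exm_somegrowths}. The one point to tighten is the first step, where you are not quite careful with the gradings: $A$ is not $\mathrm{mdeg}$-graded (the paper explicitly notes this) and $\bar F_{\beta_i}$ is \emph{not} $\mathrm{mdeg}$-homogeneous of degree $-2\,\mathrm{ht}(\beta_i)$, nor does the shift add ``lower-$\mathrm{sdeg}$'' terms; rather, the character shift $x\mapsto\phi(x^{(1)})x^{(2)}$ preserves $\mathrm{sdeg}$ \emph{exactly} and respects the $\deg$-filtration, hence is an $\mathrm{mdeg}$-filtered linear isomorphism $U^-[w]\to A$, and it is this that identifies the ($\mathrm{mdeg}$-)Hilbert series of $A'$ with that of $U^-[w]$ and yields $\mathrm{growth}(A')=\ell(w)$.
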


\subsection{An inductive approach to Conjecture A}

In this section we attempt to prove Conjecture A for some given $w\in W$ under the inductive  assumption that Conjecture A holds for all $u\in W$ with $\ell(u)<\ell(w)$ and all characters. Indeed, we can prove Conjecture A for $w$ to holds except for a very specific situation for $w,w',u,u'$. We will analyze these specific situations in subsequent sections.\\

Let $w\in W$ and $\phi$ be a character on $\Uminus{w}$ with support $\supp(\phi)\subset \Phi^+(w)$, a set of orthogonal simple roots. We continue to study the character shifted coideal subalgebra, its graded algebra, and its localization
$$A:=\Uminus{w}_\phi,\qquad A':=\gr(A),\qquad \tilde{A'}=\acf[\tilde{G}]A'=\acf[\tilde{G}]\Uminus{w'}$$
for some $w'\in W$, whose explicit description is the goal of Conjecture A. We attempt an induction on the length $\ell(w)$. As the induction base $w=e,\,\ell(w)=0$, we have $U^-[e]=\C1$ and trivial character shift $\phi=\varepsilon,\,G=1$, so $w'=e$ holds according to Conjecture A. 
 
\begin{lemma}\label{lm_inductionstep}
 Assume Conjecture A holds for all $u\in W$ with $\ell(u)<\ell(w)$ and all characters. Assume 
 $w=us_i$ with $\ell(w)=\ell(u)+1$ for some $\alpha_i$.
 Then one of the following cases occurs:
 \begin{itemize}
 \item If $u(\alpha_i)\in\supp(\phi)$ then Theorem \ref{conj_A} holds for $w$ and $w'=u'$. Hence Conjecture A holds for $w'$.
 \item If $u(\alpha_i)\not\in\supp(\phi)$ then there exists a simple root $\alpha_j$ such that $w':=u's_j$ has length $\ell(u')+1$ and 
 $$\acf[\tilde{G}(A'^0)]A'=\acf[\tilde{G}(A'^0)]\Uminus{w'}$$
 Conjecture A holds for $w$ iff $\alpha_j=\alpha_i$. 
 \end{itemize}
\end{lemma}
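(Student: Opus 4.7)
The plan is to induct on $\ell(w)$, transferring the inductive hypothesis along the embedding $U^-[u] \hookrightarrow U^-[w]$. Write $\phi_u := \phi|_{U^-[u]}$, $A_u := U^-[u]_{\phi_u}$, $A'_u := \gr(A_u)$ and $\tilde{G}_u := \tilde{G}(A'^0_u)$. Since $\Phi^+(w) = \Phi^+(u) \cup \{u(\alpha_i)\}$, the restricted support satisfies $\supp(\phi_u) = \supp(\phi) \setminus \{u(\alpha_i)\}$ in Case~1 and $\supp(\phi_u) = \supp(\phi)$ in Case~2. By the inductive hypothesis, $\acf[\tilde{G}_u] A'_u = \acf[\tilde{G}_u] U^-[u']$ with $u' = \bigl(\prod_{\beta \in \supp(\phi_u)} s_\beta\bigr) u$.

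First I would note that the character shift is natural under $U^-[u] \subset U^-[w]$, so $A_u \subset A$, and passing to leading terms gives an embedding of $\Z$-graded right coideal subalgebras $A'_u \subset A'$. Localizing over $\tilde{G}_w$ yields
\[
\acf[\tilde{G}_w] U^-[u'] \;=\; \acf[\tilde{G}_w] A'_u \;\subset\; \acf[\tilde{G}_w] A' \;=\; \acf[\tilde{G}_w] U^-[w'],
\]
the rightmost equality being the Section~5.2 localization result applied to the $\Z$-graded right coideal subalgebra $A'$. Comparing PBW monomials, the containment forces $\Phi^+(u') \subset \Phi^+(w')$, i.e.\ $u' \leq w'$ in Bruhat order by Theorem~\ref{HS09}.

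Next I would pin down $\ell(w')$ using Corollary~\ref{cor_samesize}. Because $\supp(\phi)$ consists of mutually orthogonal simple roots, $\tilde{G}_w$ is freely generated by $\{K_\mu^{-1} : \mu \in \supp(\phi)\}$, so $\rank\tilde{G}_w - \rank\tilde{G}_u$ equals $1$ or $0$ according to whether $u(\alpha_i)$ lies in $\supp(\phi)$ or not; combined with $\ell(w) = \ell(u) + 1$ this gives $\ell(w') = \ell(u')$ in Case~1 and $\ell(w') = \ell(u') + 1$ in Case~2. In Case~1, $u' \leq w'$ with equal length forces $w' = u'$, and a short manipulation of orthogonal reflections (using $s_{u(\alpha_i)} u = u s_{\alpha_i}$ and commutativity of $s_{u(\alpha_i)}$ with the factors indexed by $\supp(\phi_u)$) verifies $u' = \bigl(\prod_{\beta \in \supp(\phi)} s_\beta\bigr) w$, matching Conjecture~A for $w$. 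In Case~2, the subword property of Bruhat order forces $w' = u' s_{\alpha_j}$ for a unique simple root $\alpha_j$, while Conjecture~A predicts $u' s_{\alpha_i}$, so its validity is equivalent to $\alpha_j = \alpha_i$.

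The main obstacle is Case~2: identifying $\alpha_j = \alpha_i$ needs detailed combinatorial control of the character-shifted root vectors, which is precisely what the criteria of Sections~5.4--5.5 are designed to provide. One secondary point to check in writing this out is that $A'^0$ acquires exactly the asserted generators and no spurious ones from character shifts of longer PBW monomials: in Case~1 the new generator $K_{u(\alpha_i)}^{-1}$ enters as the leading term of the shifted Chevalley generator $F_{u(\alpha_i)} + \phi(F_{u(\alpha_i)}) K_{u(\alpha_i)}^{-1}$ (using that $u(\alpha_i)$ must then be a simple root by Theorem~\ref{HK11b}), and in Case~2 no new degree-zero elements appear, so the rank count feeding into Corollary~\ref{cor_samesize} is the one stated.
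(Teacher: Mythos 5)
Your proposal follows essentially the same skeleton as the paper's proof: restrict $\phi$ to $U^-[u]$, invoke the inductive hypothesis, observe $A'_u \subset A'$ (hence $\Phi^+(u') \subset \Phi^+(w')$ after localization), and conclude via Corollary~\ref{cor_samesize}. The Case~1/Case~2 split and the final observations match the paper's.

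However, there is a circularity in the rank accounting that the paper's proof avoids and that you only partially flag. You write ``$\tilde{G}_w$ is freely generated by $\{K_\mu^{-1}:\mu\in\supp(\phi)\}$, so $\rank\tilde{G}_w-\rank\tilde{G}_u$ equals $1$ or $0$ according to whether $u(\alpha_i)\in\supp(\phi)$'' --- but the claim that $A'^0$ has exactly those generators is the first bullet of Conjecture~A for $w$, which is (part of) what the lemma is establishing. Your closing remark acknowledges that one must rule out ``spurious'' degree-zero elements, but only asserts it. The paper's proof avoids this by running the logic in the other direction: from $A'^0\supseteq A'^0_u$ and $\Phi^+(w')\supseteq\Phi^+(u')$ one only has the \emph{inequalities} $\rank\tilde{G}\geq\rank\tilde{G}_u$ and $\ell(w')\geq\ell(u')$; Corollary~\ref{cor_samesize} applied to both $w$ and $u$, together with $\ell(w)=\ell(u)+1$, forces the sum of the two nonnegative increments to equal $1$, which yields precisely the two cases and, as a \emph{consequence}, the dichotomy $\rank\tilde{G}-\rank\tilde{G}_u\in\{0,1\}$ and the shape of $\supp(\phi)$ versus $\supp(\phi_u)$. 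That is, the paper uses the growth comparison to derive the form of $\tilde{G}_w$ rather than presupposing it; your argument should be reorganized in this direction (deduce the cases from the inequalities, then identify the new generator $K_{u(\alpha_i)}^{-1}$ as the leading term of the shifted Chevalley generator and check that $u(\alpha_i)$ is simple) rather than imposing the generator set up front.
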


\begin{proof}
For $u<w$ the restriction $\phi_u$ of $\phi$ to $\Uminus{u}\subset \Uminus{w}$ is again a character with $\supp(\phi_u)\subseteq \supp(\phi)$. The inductive assumption that Theorem \ref{conj_A} holds for $\Uminus{u}_{\phi_u}$ means that we have 
\begin{align*}
 A_u' 
 &:=\gr\left(\Uminus{u}_{\phi_u}\right)\\
 A'^0_u
 &=\acf[\langle K_\mu^{-1},\; \mu\in \supp(\phi_u)\rangle]\\
 \acf[\tilde{G}_u]A_u 
 &\cong \acf[\tilde{G}_u] \Uminus{u'} 
\end{align*}
where $u'$ has the asserted form $u':=(\prod_{\beta\in \supp(\phi_u)}s_\beta)u$, and where $\tilde{G}_u$ is the quotient group of the semigroup $G_u$ generated by $\supp(\phi_u)$, with $A'^0_u=\acf[G_u]$.\\

Now for the new element $w\in W$ with $\Phi^+(w)=\Phi^+(u)\cup \{u(\alpha_i)\}$ a new character shifted root vector $F^\phi_{u(\alpha_i)}$ appears in the PBW-basis
\begin{align*}
 A&:=\Uminus{w}_{\phi}
 =\Uminus{u}_{\phi_u}\left\langle {F}_{u(\alpha_i)}^\phi\right\rangle\\
 A'&=\gr\left(\Uminus{w}_{\phi}\right)
\end{align*}
We now apply Corollary \ref{cor_samesize} which asserts 
$$\ell(w)=\rank(\tilde{G})+\ell(w')$$
$$\ell(u)=\rank(\tilde{G}_u)+\ell(u')$$
We have $\ell(w)=\ell(u)+1$. Moreover by inclusion $A'^0\supseteq A'^0_u$ and $\Phi^+(w')\supseteq \Phi(u')$ we have inequalities 
$\rank(\tilde{G})\geq \rank(\tilde{G}_u)$ and $\ell(w')\geq \ell(u')$. Hence we have two possible cases:
\begin{enumerate}
 \item[i)] $\rank(\tilde{G})=\rank(\tilde{G}_u)+1$ and $\ell(w')=\ell(u')$. The latter implies $w'=u'$, the former implies $\rank(\tilde{G})\gneq \rank(\tilde{G}_u)$. Hence the leading term of $F^\phi_{u(\alpha_i)}$ must be the new element $K^{-1}_\mu\in A'^0$. By orthogonality of $\supp(\phi)$, we find that $\mu=u(\alpha_i)$ is a new  root  in $\supp(\phi)=\{\mu\}\cup \supp(\phi_u)$. In particular, $w=us_i=s_\beta u$. This proves that $w'=u'$ is as asserted by Conjecture A as
 
 $$(\prod_{\beta\in\supp(\phi)}s_\beta)w
 =s_nu's_i
 =u'
 =w'$$
 \item[ii)] $\rank(\tilde{G})=\rank(\tilde{G}_u)$ and $\ell(w')=\ell(u')+1$. The former implies $\tilde{G}=\tilde{G}_u$ and $\supp(\phi)=\supp(\phi_u)$. The second implies together with $\Phi^+(w')\supset \Phi^+(u')$ that there exists a simple root $\alpha_j$ with $w'=u's_j$. Since the support is unchanged, the assertion of Conjecture A amounts to the assertion $\alpha_j=\alpha_i$. 
\end{enumerate}
\end{proof}

The second case is what we need to study further. A main example to have in mind is the following. It also illustrated that when we prolong from $u$ to $w=us_i$, the new degree $u'(\alpha_j)\in \Phi^+(w')$ in graded algebra  is \emph{not} directly the leading term of the new character shifted root vector ${F}_{u(\alpha_i)}^\phi$, as one might naively expect. Rather, the degree of this leading term might be already present in $\Phi^+(u')$, and the new degree comes from some sub-leading term. Let us discuss this more explicity:


\begin{example}\label{exm_A3}
Let $\g=A_3$ and $u=s_3s_1s_2$, then 
$$\Phi^+(u)=\{\alpha_1,\alpha_{3},\alpha_{123}\} \qquad \supp(\phi)=\langle \alpha_1,\alpha_3\rangle $$ 
where again we abbreviate $\alpha_{123}=\alpha_1+\alpha_2+\alpha_3$. The algebra $\Uminus{s_3s_1s_2}$ has a PBW basis in the generators
$$F_3,\qquad F_1=T_3^{-1}F_1,\qquad F_{123}:=T_3^{-1}T_1^{-1}F_2=[F_1,[F_3,F_2]_{q^{+1}}]_{q^{+1}}$$
In accordance with Conjecture A, the character shifted root vectors are
\begin{align*}
 F^\phi_{1}
 &=F_{1}+\phi(F_{1})K_{1}^{-1}\\
 F^\phi_{3}
 &=F_{3}+\phi(F_{3})K_{3}^{-1}\\
 F^\phi_{{123}}
 &=F_{123}
 +\phi(F_{1})[K_1^{-1},[F_3,F_2]_{q^{}}]_{q^{}}
 +\phi(F_{3})[F_1,[K_3^{-1},F_2]_{q^{}}]_{q^{}}\\
 &+\phi(F_{1})\phi(F_{3})[K_1^{-1},[K_3^{-1},F_2]_{q^{}}]_{q^{}}\\
&=F_{123}
 +\phi(F_{1})(q^{-1}-q)[F_3,F_2]_{q^{}}K_1^{-1}
 +\phi(F_{3})(q^{-1}-q)[F_1,F_2]_{q^{}}K_3^{-1}\\
 &+\phi(F_{1})\phi(F_{3})(q^{-1}-q)^2F_2 K_1^{-1}K_3^{-1}
 \end{align*}
These have highest degrees $0,\,0,\,\alpha_2$ and thus the graded algebra corresponds to the Weyl group element $u'=s_{1}s_{3}u=s_2$ as conjectured:
$$\gr\left(\Uminus{s_3s_1s_2}_{\phi}\right)\cong \Uminus{s_2}\langle K_{1}^{-1},K_{3}^{-1}\rangle$$

Now the  induction step is to prolong this by $\alpha_1$ to $w=us_{1}$, then we have a new root vector in degree $u(\alpha_1)=\alpha_{23}$, to be precise:
\begin{align*}
 F_{23}&:=T_3^{-1}T_1^{-1}T_2^{-1}F_1=T_3^{-1}F_2=[F_3,F_2]_{q^{+1}}\\
 F^\phi_{23}
 &=[F_3,F_2]_{q}
 +\phi(F_3)[K_3^{-1},F_2]_{q}\\
 &=F_{23}
 +\phi(F_3)(q^{-1}-q)F_2K_3^{-1}
 \end{align*}
The leading degree term $F_2$ of $F^\phi_{23}$ is already contained in $\Uminus{u'}$, as leading degree term of $F^\phi_{{123}}$. The new degree $u'(\alpha_j)$ in $\Uminus{w'}\supset \Uminus{u'}$ will be the leading degree of some suitable linear combination, and the main question is which will be the next-leading degree. Simply from degree considerations, there are two reasonable possibilities (this will be formalized as Criterion 1 below):
\begin{itemize}
 \item $u(\alpha_1)=u'(\alpha_j)=\alpha_{23}$, contrary to Conjecture A. Naively, this seems the natural choice because it is the next leading degree in $F^\phi_{23}$.
 \item $u(\alpha_1)+\alpha_{12}-\alpha_{23}=u'(\alpha_1)=\alpha_{12}$, according to Conjecture A. For this to happen, $F_2,{F}_{23}$ have to cancel simultaneously in a suitable linear combination of $F^\phi_{123},F^\phi_{23}$, and then ${F}_{12}$ from $F^\phi_{123}$ is the leading term.
\end{itemize}
Having explicit expressions for $F^\phi_{123},F^\phi_{23}$ we can check explicitly that the latter is the case, because the coefficients of $F_2,{F}_{23}$ in $F^\phi_{123},F^\phi_{23}$ are proportional:
\begin{align*}
 F^\phi_{123}-\phi(F_1)(q-q^{-1})F^\phi_{23}K_1^{-1}
 &=F_{123}+\phi(F_{3})(q^{-1}-q)[F_1,F_2]_{q^{}}K_3^{-1}
\end{align*}
\end{example}

\subsection{Criteria implying Conjecture A}

Let $C=\Uminus{w}_\phi$ with $w=us_i$ with $\ell(w)=\ell(u)+1$ and $u(\alpha_i)\not\in\supp(\phi)$ and let us assume that $\gr(\Uminus{u}_\phi)=\Uminus{u'}$ for $u'$ according to Conjecture A. The second case in Lemma \ref{lm_inductionstep} states that $\acf[\tilde{G}]\gr(\Uminus{w}_\phi)=\acf[\tilde{G}]\Uminus{w'}$ for $w'=u's_j$ with $\ell(w')=\ell(u')+1$ for some $\alpha_j\in \Pi$. Conjecture A holds for $w'$ iff we have $\alpha_j=\alpha_i$. 

We now collect three criteria when this is the case. They allows us to prove Conjecture A explicitly for given $\g$ of small rank or certain explicit $w\in W$, and they suffice to prove Conjecture A altogether for type $A_n$ and many cases in $B_n,C_n,D_n$. However, this approach is rather ad-hoc and a more systematic proof strategy would be much preferred. \\

As first criterion, we note the general relation between $\alpha_i$ and $\alpha_j$, which is potentially different. We find that this is a rather restrictive situation, on the level of root systems, and there is only a few ``critical cases'' up to reflection in which $\alpha_i\neq \alpha_j$ is possible, and which have to be treated further.

\begin{lemma}[Criterion 1]\label{lm_crit1}
Let $\acf[\tilde{G}]\gr(\Uminus{w}_\phi)=\acf[\tilde{G}]\Uminus{w'}$ for $w=us_i$ with $\ell(w)=\ell(u)+1$ and $w'=u's_j$ with $\ell(w')=\ell(u')+1$ for some $w,w',u,u'\in W$.
 \begin{enumerate}[a)]
 \item In general we have $u(\alpha_i)-u'(\alpha_j)\in\Z[\supp(\phi)]$.
 \item Let  $u(\alpha_i)-u'(\alpha_j)\in\Z[S]$ with $S$ a subset of $\supp(\phi)$ of minimal cardinality. Then one of the following cases holds:
 \begin{enumerate}
 \item[(i)] $\alpha_i=\alpha_j$ holds.
  \item[(ii)] $u(\alpha_i)=u'(\alpha_j)$ holds.
  \item[(ii)] $u(\alpha_i),u'(\alpha_j),S$ are up to reflection in a subsystem of rank~$2$.
 \item[(iv)] The tuple $u(\alpha_i),u'(\alpha_j),S$ is up to reflection in a parabolic subsystems ${\Phi_r\subset \Phi}$ of rank $r=3,4$ and is one of the following cases\\
 
 \begin{center}
 \begin{tabular}{l|rll}
 $\Phi_r$ & $u(\alpha_i)-u'(\alpha_j)$ & & $\in \Z[S]$\\
 \hline
 $A_3$ & $\alpha_{123}-\alpha_{2}$ & $=\alpha_1+\alpha_3$ & $\in \langle\alpha_1,\alpha_3\rangle_\Z$\\
 $B_3$ & $\alpha_{123}-\alpha_2$ & $=\alpha_1+\alpha_3$ & $\in \langle\alpha_1,\alpha_3\rangle_\Z$\\
 $B_3$ & $\alpha_{1123}-\alpha_2$ & $=\alpha_1+\alpha_1+\alpha_3$ &$\in \langle\alpha_1,\alpha_3\rangle_\Z$\\
 $C_3$ & $\alpha_{123}-\alpha_2$ & $=\alpha_1+\alpha_3$ & $\in \langle\alpha_1,\alpha_3\rangle_\Z$\\
 $D_4$ & $\alpha_{1234}-\alpha_4$ & $=\alpha_1+\alpha_2+\alpha_3$ & $\in \langle\alpha_1,\alpha_2,\alpha_3\rangle_\Z$\\
 \hline
 \end{tabular}
 \end{center}~
 
 \end{enumerate}
 \item Suppose that Conjecture A holds for $u$. Then either b) (i) applies, so Conjecture A holds for $w$ as well, or b) (iv) applies. Moreover, we have $u(\alpha_i)-u(\alpha_j)\in\Z[S]$, so also the tuple $u(\alpha_i),u(\alpha_j),S$ can be found in the same table. 
 \end{enumerate}
\end{lemma}
\begin{proof}
 \begin{enumerate}[a)]
 \item The proof of Lemma \ref{lm_inductionstep} shows that $\Uminus{w}_\phi$ contains a new element~$F^\phi_{u(\alpha_i)}$ and since $\phi$ is a character shifts with respect to $\supp(\phi)$, this algebra is still graded by cosets of $\Z[\supp(\phi)]$. On the other hand, the graded algebra $A'$ contains a new element in degree $u'(\alpha_j)$.
 \item Any set of $k$ roots can be reflected to a parabolic subsystem of rank at most $k$. Applying this assertion to $S$ and $\beta_i$ gives a subsystem $\Phi_r$ of rank $r=|S|+1$, which is moreover connected (otherwise the assumption can not hold even for a subset $S'$ smaller $S$). Applying the assertion a second time to $S$ gives a subset of $|S|=r-1$ many orthogonal simple roots. This can only be true for a subsystem of rank $2$ or for the cases $\Phi_r=A_3,B_3,C_3$ with $S$ reflected to the outmost simple roots $\alpha_1,\alpha_3$, or for $\Phi_r=D_4$ with $S$ reflected to the three outmost roots $\alpha_1,\alpha_2,\alpha_3$. The result then follows from directly inspecting these root systems for solutions $\beta-\gamma\in\Z[S]$. 
 \item This follows from $u(\alpha_i)-u(\alpha_j)\in\Z[S]$.
 \end{enumerate}
 \end{proof}

As a second criterion, we formulate the consequence of applying the induction step to two different presentations of $w$: 

\begin{definition}\label{def_unique}
 A Weyl group element $w$ has a \emph{unique ending} iff one of the following equivalent conditions applies:
 \begin{itemize}
 \item Any reduced expression of $w$ ends in the same letter $s_i$.
 \item $\Phi^+(w^{-1})$ contains a unique simple root $\alpha_i$.
 \item $w(\alpha_k)>0$ for all simple roots $\alpha_k$ except for one $\alpha_i$. 
 \end{itemize}
\end{definition}

\begin{lemma}[Criterion 2]\label{lm_crit2}
 Assume that $w$ has not a unique ending, i.e. $w=u_1s_i=u_2s_l$ with $\ell(w)=\ell(u_1)+1=\ell(u_2)+1$ and  $\alpha_i\neq \alpha_l$, and $u_1(\alpha_i),u_2(\alpha_l)\not\in\supp(\phi)$. Then if Conjecture A holds for $u_1,u_2$, then $\alpha_j=\alpha_i$, and thus Conjecture A holds also for $w$.
\end{lemma}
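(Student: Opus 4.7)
The plan is to apply Lemma \ref{lm_inductionstep} to both factorizations $w = u_1 s_{\alpha_i}$ and $w = u_2 s_{\alpha_l}$. In both cases the hypothesis $u_1(\alpha_i), u_2(\alpha_l) \notin \supp(\phi)$ places us in the second branch of that lemma, yielding simple roots $\alpha_{j_1}, \alpha_{j_2}$ with reduced expressions $w' = u_1' s_{\alpha_{j_1}} = u_2' s_{\alpha_{j_2}}$ of length $\ell(u_k')+1$ respectively. The hypothesis that Conjecture A holds for $u_1, u_2$ gives explicit formulas $u_k' = \pi u_k$ where $\pi := \prod_{\beta \in \supp(\phi)} s_\beta$; here one notes that $\supp(\phi_{u_k}) = \supp(\phi)$ because the one new positive root $u_k(\alpha_\bullet)\in\Phi^+(w)\setminus\Phi^+(u_k)$ lies outside the support. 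Canceling $\pi$ from $u_1' s_{\alpha_{j_1}} = u_2' s_{\alpha_{j_2}}$ and combining with $u_1 s_{\alpha_i} = u_2 s_{\alpha_l}$ then yields the identity $s_{\alpha_i} s_{\alpha_l} = s_{\alpha_{j_1}} s_{\alpha_{j_2}}$ in $W$.

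Because a length-two Coxeter element has at most two reduced expressions, and the swap $s_a s_b = s_b s_a$ holds only when $s_a, s_b$ commute, this equation leaves only two options (using $\alpha_i \neq \alpha_l$ and noting that $\alpha_{j_1} = \alpha_{j_2}$ would force $u_1 = u_2$): either the desired $\alpha_{j_1} = \alpha_i$, $\alpha_{j_2} = \alpha_l$, or the swap case $\alpha_{j_1} = \alpha_l$, $\alpha_{j_2} = \alpha_i$ with $\alpha_i \perp \alpha_l$.

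The heart of the proof is to rule out the swap. Orthogonality forces $w(\alpha_l) = u_1 s_{\alpha_i}(\alpha_l) = u_1(\alpha_l) < 0$, since $\alpha_l$ is a right descent of $w$, so $u_1 = v s_{\alpha_l}$ is reduced with $\ell(v) = \ell(w) - 2$, and symmetrically $u_2 = v s_{\alpha_i}$ by the commutation. Conjecture A holds for $v$ by the overarching induction on length (and for $u_1$ by hypothesis), while $v(\alpha_l) = u_2(\alpha_l) \notin \supp(\phi)$, so Lemma \ref{lm_inductionstep} applied to $(v, \alpha_l)$ puts us again in the second branch and yields $u_1' = v' s_{\alpha_l}$ as a reduced expression, in particular $\ell(u_1') = \ell(v')+1$. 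In the swap case this computes
\[
w' \;=\; u_1' s_{\alpha_{j_1}} \;=\; u_1' s_{\alpha_l} \;=\; v' s_{\alpha_l}^2 \;=\; v',
\]
forcing $\ell(w') = \ell(v')$ and contradicting $\ell(w') = \ell(u_1')+1 = \ell(v')+2$ from the reducedness supplied by Lemma \ref{lm_inductionstep}. Hence the swap is impossible, $\alpha_{j_1} = \alpha_i$, and Conjecture A holds for $w$.

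The main obstacle is the bookkeeping of $\phi$-supports and reducedness across the nested factorizations; once one verifies that $\supp(\phi_v) = \supp(\phi_{u_k}) = \supp(\phi)$ and that each expression in sight is reduced, the collision of lengths produced by the orthogonal commutation $s_{\alpha_i} s_{\alpha_l} = s_{\alpha_l} s_{\alpha_i}$ closes the argument cleanly, with no need to invoke the case-by-case classification from Criterion~1.
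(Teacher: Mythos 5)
Your proof is correct, and it is more explicit than the paper's at the crucial point. Both proofs apply Lemma~\ref{lm_inductionstep} to the two factorizations $w=u_1s_{\alpha_i}=u_2s_{\alpha_l}$. The paper then invokes Matsumoto's theorem to reduce to the situation where $u_1,u_2$ differ by a single braid move at the end, and finishes with the terse observation that $\Phi^+(u_1'),\Phi^+(u_2')\subset\Phi^+(w')$, claiming that $\alpha_j\neq\alpha_i$ would force $\Phi^+(u_1')\subset\Phi^+(u_2')$ (hence $u_1'=u_2'$, a contradiction). Your route instead cancels $\pi=\prod_{\beta\in\supp}s_\beta$ to get the clean identity $s_{\alpha_i}s_{\alpha_l}=s_{\alpha_{j_1}}s_{\alpha_{j_2}}$, classifies its two solutions, and then rules out the orthogonal swap by a length contradiction after passing to $v=u_1s_{\alpha_l}$ with $\ell(v)=\ell(w)-2$. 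This is valuable because the paper's containment argument actually breaks down precisely in the commuting case $\alpha_i\perp\alpha_l$ (there $\alpha_j=\alpha_l$, $\alpha_k=\alpha_i$ and $u_1'(\alpha_j)\neq u_2'(\alpha_k)$ is perfectly consistent with $\Phi^+(u_1'),\Phi^+(u_2')\subset\Phi^+(w')$), whereas your argument closes that case explicitly. One caveat: your step ``Conjecture A holds for $v$ by the overarching induction on length'' goes beyond the lemma's stated hypothesis, which only grants Conjecture A for $u_1,u_2$. This is harmless in the enclosing induction of Section~5.3 (which does assume Conjecture A for all shorter elements), but strictly speaking you prove a slightly weaker statement than the lemma as written. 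In fact you can avoid $v$ entirely and stay within the literal hypothesis: in the swap case, orthogonality gives $u_1(\alpha_l)=w(\alpha_l)<0$; since $-u_1(\alpha_l)=u_2(\alpha_l)\notin\supp(\phi)=\Phi^+(\pi)$, the reflection $\pi$ keeps it negative, so $u_1'(\alpha_l)=\pi u_1(\alpha_l)<0$, contradicting $\ell(u_1's_{\alpha_l})=\ell(u_1')+1$ from Lemma~\ref{lm_inductionstep}. Either way, your reasoning is sound and arguably tightens the paper's argument.
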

\begin{proof}
 As is well known from \cite[Thm. 29]{M64}, there is a series of braid group moves to transform the two presentations of $w$ into one another. In particular, under our assumptions, there exists a reduced expression of $w$ which ends
 $$w=r\cdots s_is_ls_i=r\cdots s_ls_is_l$$
 with $r^{-1}w$ of length $2,3,4,6$ depending on $(\alpha_l,\alpha_i),i\neq l$, such that a braid group move can be performed. Differently spoken
 $$u_1:=r\cdots s_is_l,\qquad w=u_1s_i$$
 $$u_2:=r\cdots s_ls_i,\qquad w=u_2s_l$$
with $\ell(w)=\ell(u_1)+1=\ell(u_2)+1$.  Assume that Conjecture A holds for the elements $u_1,u_2$ of shorter length. If the induction step Lemma \ref{lm_inductionstep} is applied to both reduced expressions $w=u_1s_i=u_2s_l$ in  
the second case of the Lemma, and using that $u_1(\alpha_i),u_2(\alpha_l)\not\in\supp(\phi)$ asserts that 
$u_1'=r'\cdots s_is_l$ and $u_2':=r'\cdots s_ls_i$
and that in particular
 $$\Phi(u'_1),\Phi(u'_2)\subset \Phi(w')$$
thus we get $\Phi(u'_2)\subset \Phi(u'_1s_{\alpha_j})$, and hence $\Phi(r'^{-1}u_2') =\Phi(\cdots s_ls_i)\subset \Phi(\cdots s_is_ls_{\alpha_j})=\Phi(r'^{-1}u_1's_{\alpha_j})$. Since $\ell(r'^{-1}u_2')\leq m_{il}-1$ the set $\Phi(r'^{-1}u_2')$ contains only one of the two simple roots $\alpha_i$ and $\alpha_l$ and the set $\Phi(r'^{-1}u_1')$ contains only the other one. If we have $\alpha_i\neq \alpha_j$ this directly leads to a contradiction.

\end{proof}

\subsection{The proof of Conjecture A for type \texorpdfstring{$A_n$}{An}}\label{sec_proofA}

We study the critical cases of Criterion 1: Assume $w,w',u,u',\alpha_i,\alpha_j$ are as in Lemma \ref{lm_inductionstep} such that the relation of $u(\alpha_i),\,u(\alpha_j)$ are as in Lemma \ref{lm_crit1} b) (iv) in the case labeled $A_3$ with $\alpha_i\neq \alpha_j$, contrary to Conjecture A. This means explicitly: We have a subset of the support $\Z[S]=\Z\sigma_1\oplus \Z\sigma_2$ for $\sigma_1,\sigma_2\in \Phi^+(w)$,
and there is a series of reflections that sends all relevant elements to a parabolic subsystem of type $A_2$ as described in the Lemma
$$\epsilon_1\sigma_1\mapsto \alpha_1,\quad 
\epsilon_2\sigma_2\mapsto \alpha_3,\quad
u(\alpha_i)\mapsto \alpha_{123},\quad
u(\alpha_j)\mapsto\alpha_{2},\quad
u'(\alpha_j)=\alpha_{123}$$
and it is not clear after reflections with which signs $\epsilon_1,\epsilon_2\in\{\pm 1\}$ the roots $\sigma_1$ and $\sigma_2$ appear. Note that this completely fixes the intersection of this $3$-dimensional space with the root system $\Phi$, because the remaining positive roots are $\sigma_1+u(\alpha_j)$ and $\sigma_2+u(\alpha_j)$. It also shows $(\alpha_i,\alpha_j)=0$. We pictorially write the positive roots of $A_3$ as 
\begin{center}
\begin{tabular}{ccccccccccc}
 &&$\alpha_{123}$&& \\
 &$\alpha_{12}$ && $\alpha_{23}$ & \\
 $\alpha_1$ && $\alpha_2$ &&  $\alpha_3$
\end{tabular}
\end{center}~\\
which correspond as described above to the following actual elements in the higher rank root system $\Phi$ (i.e. not just up to reflection)
\begin{center}
\begin{tabular}{ccccccccccc}
 &&$u(\alpha_i)=u'(\alpha_j)$&& \\
 &$\epsilon_1\sigma_1+u(\alpha_j)$ && $\epsilon_2\sigma_2+u(\alpha_j)$ & \\
 $\epsilon_1\sigma_1$ && $u(\alpha_j)$ &&  $\epsilon_2\sigma_2$
\end{tabular}
\end{center}~\\
 If we apply $u^{-1}$, then $u(\alpha_i),u(\alpha_j)$ are mapped to the simple roots $\alpha_i,\alpha_j$ in $\Phi$ and we now know already that $\alpha_i,\alpha_j$ are orthogonal. 
We observe $u^{-1}(\sigma_i)=s_i w^{-1}(\sigma_i)<0$ because by $\sigma_1,\sigma_2\in\Phi^+(w)$ implies $w^{-1}(\sigma_1),w^{-1}(\sigma_2)<0$ and this can only be changed to $>0$ by the subsequent application of $s_i$ if $\sigma_i= u(\alpha_i)$, which is not the case.
The images of the two unnamed roots in the middle row are the positive or negative roots
$$\gamma_k:=u^{-1}(\epsilon_k\sigma_k+u(\alpha_j))
=\epsilon_k u^{-1}(\sigma_k)+ \alpha_j$$
and from the $A_3$ root system we see
$$\gamma_1+\gamma_2=\alpha_i+\alpha_j$$
and we see the following property (as we see after applying  $u$, when $u(\alpha_i)-\epsilon_1\sigma_1-u(\alpha_j)=\epsilon_2\sigma_2$ and $u(\alpha_j)-\epsilon_1\sigma_1-u(\alpha_j)=-\epsilon_1\sigma_1$ and similarly for $1,2$ switched)
$$\alpha_i-\gamma_1 = \epsilon_2 u^{-1}(\sigma_2),\qquad \alpha_j-\gamma_1=-\epsilon_1u^{-1}(\sigma_1),$$
$$\alpha_i-\gamma_2=\epsilon_1 u^{-1}(\sigma_1),\qquad \alpha_j-\gamma_2=-\epsilon_2u^{-1}(\sigma_2).$$
These formulae imply severe restriction, of which we now discuss the first: Because $\alpha_i,\alpha_j$ are simple roots and the right-hand side is a (positive or negative) root, in the first formula either $\gamma_1>0,\epsilon_2=+1$ or $\gamma_1<0,\epsilon_2=-1$ and in the second formula either $\gamma_1>0,\epsilon_1=-1$ or $\gamma_1<0,\epsilon_1=+1$, which altogether implies $\epsilon_1=-\epsilon_2$. The third and fourth formula accordingly imply then $\gamma_2$ has the opposite sign as $\gamma_1$. We now choose without restriction of generality 
$$\gamma_1>0,\quad \epsilon_1=-1,\quad \gamma_2<0,\quad \epsilon_2=+1$$
which means our positive roots in the previous diagram read

\begin{center}
\boxed{
\begin{tabular}{ccccccccccc}
 &&$u(\alpha_i)=u'(\alpha_j)$&& \\
 &$-\sigma_1+u(\alpha_j)$ && $\sigma_2+u(\alpha_j)$ & \\
 $-\sigma_1$ && $u(\alpha_j)$ &&  $\sigma_2$
\end{tabular}
}
\end{center}~\\
After applying $u^{-1}$ we introduce the positive roots 
$$\gamma:=-\gamma_2,\quad \alpha_i+\gamma+\alpha_j=\gamma_1,\quad \sigma_k':=-u^{-1}(\sigma_k)$$
and by the formulae above $\alpha_i+\gamma=\sigma_1'$ and $\alpha_j+\gamma=\sigma_2'$. So after applying $u^{-1}$
and turning the picture so that the positive roots are on top we have 
\begin{center}
\boxed{
\begin{tabular}{ccccccccccc}
 &&$\alpha_i+\gamma+\alpha_j$&& \\
 &$\sigma_1'$ && $\sigma_2'$ & \\
 $\alpha_i$ && $\gamma$ &&  $\alpha_j$
\end{tabular}
}
\end{center}~\\
\begin{corollary}
The positive root $\gamma$ associated to our situation 
has the distinguished property 
\begin{align}\label{formula_distinguished}
    \alpha_i+\gamma\in\Phi^+,\qquad \gamma+\alpha_j\in \Phi^+
\end{align}
and in relation to the Weyl group element
$$
w(\gamma)=u(\sigma_1')=-u(\sigma_1),\quad 
w(\gamma+\alpha_i)=u(\gamma)=\sigma_k+u(\alpha_j),\quad
w(\gamma+\alpha_j)=u(\sigma_2')=-u(\sigma_2)
$$
\end{corollary}

\begin{example}
Recall Example \ref{exm_A3} for $A_3$, where  $u=s_3s_1s_2$, $u'=s_2$ and a support spanned by $\alpha_1,\alpha_3$ is prolonged by $\alpha_i=\alpha_1$ to $w=s_3s_1s_2s_1$ with either $w'=s_2s_1$ for $\alpha_j=\alpha_i$ (which is correct, according to Conjecture A, and confirmed by direct calculation) or with $w'=s_2s_3$ for $\alpha_j=\alpha_3$. In the latter case the picture above reads 
\begin{center}
\begin{tabular}{ccccccccccc}
 &&$u(\alpha_i)=u'(\alpha_j)=\alpha_{23}$&& \\
 &$-\sigma_1+u(\alpha_j)=\alpha_2$ && $\sigma_2+u(\alpha_j)=\alpha_{123}$ & \\
 $-\sigma_1=-\alpha_1$ && $u(\alpha_j)=\alpha_{12}$ &&  $\sigma_2=\alpha_3$
\end{tabular}
\end{center}~\\
and after application of $u^{-1}$ and turning the picture as above
\begin{center}
\begin{tabular}{ccccccccccc}
 &&$\alpha_i+\gamma+\alpha_j=\alpha_{123}$&& \\
 &$\sigma_1'=\alpha_{13}$ && $\sigma_2'=\alpha_{23}$ & \\
 $\alpha_i=\alpha_1$ && $\gamma=\alpha_2$ &&  $\alpha_j=\alpha_3$
\end{tabular}
\end{center}~\\
\end{example}

A canonical example of a root in any connected root system satisfying the distinguished property \eqref{formula_distinguished} is the following:

\begin{definition}
 For simple roots $\alpha_i,\alpha_j$ in a connected Dynkin diagram, there is a unique path from $\alpha_i$ to $\alpha_j$, and summing along the path without endpoints defines a root
 $$\rho_{ij}:=(\alpha_{i-1}+\cdots+\alpha_{j-1})$$
 with the property that $\rho_{ij}+\alpha_i,\rho_{ij}+\alpha_j\in\Phi$. Moreover it contains $\alpha_i$ and $\alpha_j$ with multiplicity zero.
\end{definition}
In this case we are finished, due to the following statement
\begin{corollary}\label{cor_Crit2}
Assume that $\gamma$ contains $\alpha_i$ with multiplicity zero, then Criterion 2 (Lemma \ref{lm_crit2}) applies. This showing Conjecture A in this case if it holds inductively for shorter words. 
\end{corollary}
\begin{proof}
    The Weyl group element $w$ has a unique ending $s_i$ iff $w(\alpha_k)>0$ for all $\alpha_k\neq \alpha_i$. But if $\alpha_i$ has multiplicity zero, this implies $w(\gamma)>0$, which is a contradiction to its definition giving $w(\gamma)=u(s_i(\gamma))=u(\sigma_1')=-\sigma_1<0$.
\end{proof}

Assume type $A_n$, then Criterion 1 (Lemma \ref{lm_crit1}) returns only critical cases of type $A_3$. Since in $A_n$ the \emph{only} element with the distinguished property \eqref{formula_distinguished}  is  $\rho_{ij}$ and anyhow all multiplicities are $\leq 1$, the root $\gamma$ with $\gamma+\alpha_i\in\Phi$ cannot contain $\alpha_i$ with multiplicity $\geq 1$, so Criterion 2 applies as discussed above. 

\begin{corollary}
 Conjecture A holds for type $A_n$.
\end{corollary}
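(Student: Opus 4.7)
The plan is to argue by induction on $\ell(w)$, applying Lemma \ref{lm_inductionstep} at every step. The base case $\ell(w)=0$ is trivial, since then $\phi=0$ and $w=w'=1$. For the induction step, write $w = us_{\alpha_i}$ with $\ell(u)=\ell(w)-1$ and observe that case i) of Lemma \ref{lm_inductionstep} automatically gives Conjecture A for $w$, so only case ii) requires work: there one must verify that the new simple reflection $s_{\alpha_j}$ appearing on the right of $u'$ in $w' = u's_{\alpha_j}$ satisfies $\alpha_j = \alpha_i$.

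To handle case ii) in type $A_n$, I would first invoke Criterion 1 (Lemma \ref{lm_crit1}) to reduce to the $A_3$-configuration listed in the table: since every parabolic subsystem of a root system of type $A_n$ is itself a product of type $A$, none of the entries corresponding to $B_3$, $C_3$, or $D_4$ can arise. This places $u(\alpha_i)$, $u'(\alpha_j)$, and $\supp(\phi)=\{\alpha_m,\alpha_n\}$ inside a rank-three $A_3$-subsystem $\tilde{S}=\{\alpha_m,\beta,\alpha_n\}$, and the discussion immediately preceding the corollary then pins down $\beta_i = \beta + \alpha_m$, $\beta_j = \beta + \alpha_n$, together with a second intermediate root $\gamma$ satisfying $\gamma + \alpha_i, \gamma + \alpha_j \in \Phi$.

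The induction then closes by either of the two remaining criteria. For Criterion 2 (Lemma \ref{lm_crit2}): the obstruction established above the corollary is that $\gamma = \rho_{ij}$ would contradict $w(\alpha_k)>0$ for $k\neq i$, and evading this forces $\gamma$ to contain $\alpha_i$ with multiplicity at least one, which is impossible in type $A_n$ where every root has coefficients in $\{0,1\}$. Alternatively, Criterion 3 (Lemma \ref{lm_crit3}) applies directly: $\beta=\beta_i-\alpha_m$ is a positive root, so $\alpha_m$ appears in $\beta_i - \alpha_m$ with multiplicity zero, which is exactly the hypothesis of Lemma \ref{lm_crit3}. Either route yields $\alpha_j=\alpha_i$ and completes the induction.

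The main obstacle is conceptually upstream rather than inside the $A_n$-argument itself: once Criterion 1 has enumerated the critical configurations and Criteria 2 and 3 have been set up, the multiplicity-$\leq 1$ property of $A_n$ collapses all remaining cases essentially for free. The genuine work is hidden in the preparatory Lemmas \ref{lm_crit2} and \ref{lm_crit3}; in higher classical types the same template applies, but one must additionally eliminate the $B_3$, $C_3$, and $D_4$ rows of the Criterion 1 table and, in the $D_4$ case, adapt the $\gamma$-argument to a three-element support.
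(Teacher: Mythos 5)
Your proposal is correct and follows the same route as the paper: reduce via Lemma \ref{lm_inductionstep} to showing $\alpha_j=\alpha_i$, invoke Criterion 1 to see that only the $A_3$-configuration can arise in type $A_n$, and then use the multiplicity-$\leq 1$ property to close the induction via Criterion 2 (since $\gamma$ cannot contain $\alpha_i$) or, alternatively, via Criterion 3 (since $\beta=\beta_i-\alpha_m$ cannot contain $\alpha_m$). This matches the paper's argument in Section 5.5 both in structure and in the two alternative ways of finishing the induction step.
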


\subsection{Remark on further cases}

We conclude by some remarks on type $B_n,C_n,D_n$. As we mentioned, a more structured approach would be desirable. Again Criterion~1 (Lemma \ref{lm_crit1}) lists critical case $A_3$ and $B_3$ or $C_3$ or $D_4$, which are the possible subdiagrams of the Dynkin diagram. We first assume again case $A_3$ and list the distibuished roots of $\gamma$ explicitly:
\begin{fact}
 By directly inspecting the root systems $B_n,C_n,D_n$, we find that for given $\alpha_i,\alpha_j$ the only positive roots $\beta$ with $\beta+\alpha_i,\beta+\alpha_j\in \Phi$ are
 \begin{align*}
 \gamma & =\rho_{ij} \\
 \gamma & =\alpha_{i-1}+\cdots+\alpha_j+2\alpha_{j-1}+\cdots\\
 \gamma & =\alpha_{j-1}+\cdots+\alpha_i+2\alpha_{i-1}+\cdots
 \end{align*}
 where the simple roots are written in descending order and $\alpha_1$ is the unique short simple root for $B_n$, resp. $\alpha_1$ is the unique long simple root for $C_n$, resp. $\alpha_1,\alpha_2$ are the two short legs for $D_n$, and where the final summands $\cdots$ depend on the type of the root system.
\end{fact}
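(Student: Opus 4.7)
The strategy is a direct case analysis in the $e_i$-basis description of each root system. Recall the positive roots: $e_i - e_j$ in all classical types; $e_i + e_j$ in $B_n, C_n, D_n$; $e_i$ in $B_n$; and $2e_i$ in $C_n$. The simple roots are $\alpha_k = e_k - e_{k+1}$ together with one distinguished terminal root depending on type, with $D_n$ having a fork at one end. The key elementary observation is that for a positive root $\beta$ and a simple root $\alpha_k$, the sum $\beta + \alpha_k$ lies in $\Phi$ precisely when a cancellation occurs in the $e_i$-coordinates, since the result must again fit one of the admissible shapes above. This reduces the Fact to a finite combinatorial enumeration.

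Fix orthogonal simple roots $\alpha_m, \alpha_n$ with $m < n$, and first assume that neither is the terminal special simple root. I would split by the shape of $\beta$. If $\beta = e_i - e_j$, simultaneous cancellation against $\alpha_m$ and $\alpha_n$ forces $\beta = e_{m+1} - e_n$, which is $\rho_{mn}$. If $\beta = e_i + e_j$, the analogous forcing yields $\beta = e_{m+1} + e_{n+1}$, whose simple-root expansion has coefficient $1$ on $\alpha_{m+1}, \ldots, \alpha_n$ and coefficient $2$ on the segment past $\alpha_n$ reaching the terminal node; symmetrically one obtains an analogous expression extending past $\alpha_m$. Short- or long-root shapes ($e_i$ or $2e_i$) produce candidates only when one of $\alpha_m, \alpha_n$ is itself the terminal special simple root.

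I would then handle the boundary cases where $\alpha_m$ or $\alpha_n$ coincides with a terminal special simple root. In $B_n$ with $\alpha_n = e_n$ short, adding $e_n$ to a positive root yields another root only for $\beta = e_i - e_n$ or $\beta = e_i$, and the additional cancellation against $\alpha_m$ pins $\beta$ to $\rho_{mn}$ or $e_{m+1}$, both matching the asserted shapes once expanded in simple roots. Analogous short checks cover $C_n$ (with $\alpha_n = 2e_n$ long) and $D_n$ (where both legs of the fork can play the role of the terminal special simple root), and in every situation the resulting $\beta$ translates via the standard $e_i$-to-$\alpha$ change of basis into the three canonical families listed in the Fact.

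The main obstacle is bookkeeping: keeping the subcases disjoint, verifying that each candidate passes both cancellation conditions, and carrying out the $D_n$ analysis cleanly since the fork makes the extension rule less symmetric than in $B_n, C_n$. A purely Weyl-theoretic reduction to a small-rank subsystem is unavailable here because the candidates for $\beta$ can have support extending all the way to the terminal node, so the direct $e_i$-basis inspection appears to be the most transparent route.
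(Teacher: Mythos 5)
Your overall approach --- working in the $e_i$-basis and enumerating the shapes of positive roots --- is exactly the ``direct inspection'' the paper alludes to, so there is no conceptual disagreement in method. However, the proposal stops at the stage where the real content begins, and in the one place where the bookkeeping is genuinely delicate, your sketch glosses over a concrete issue. Specifically, in type $D_n$ take $\alpha_m,\alpha_n$ to be the two legs of the fork (the paper's $\alpha_1,\alpha_2$), so in $e$-coordinates $\alpha_m=e_{n-1}-e_n$ and $\alpha_n=e_{n-1}+e_n$. Then every $\beta=e_{i}-e_{n-1}$ with $i<n-1$, that is every sum $\alpha_3+\alpha_4+\cdots+\alpha_k$ starting from the branch node and running away from both legs, satisfies $\beta+\alpha_m=e_i-e_n\in\Phi$ and $\beta+\alpha_n=e_i+e_n\in\Phi$, and each such $\{\alpha_m,\beta,\alpha_n\}$ has the required $A_3$ inner-product pattern. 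Only the shortest of these ($k=3$) is $\rho_{mn}$; the longer ones do not have the ``doubled tail'' form $\cdots+2\alpha_{n-1}+\cdots$ of the Fact's second and third families, whose indices are not even defined when $m=1$ or $n=2$.

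So when you write that ``in every situation the resulting $\beta$ translates via the standard $e_i$-to-$\alpha$ change of basis into the three canonical families,'' that assertion is precisely what needs to be checked and, at least for the $D_n$ two-leg configuration, it does not obviously hold as the Fact is literally written. To close the gap you would need either to carry out the $D_n$ enumeration explicitly and show how the families in the Fact are to be interpreted there (the ``final summands depend on the type'' clause is doing a lot of unstated work), or to observe that in the surrounding argument the two-leg configuration is excluded by some additional hypothesis on $\supp(\phi)$ and record that restriction explicitly. As it stands, the proposal is a plausible plan but not yet a proof: the $B_n,C_n$ non-boundary analysis you sketch is routine and correct, but the $D_n$ fork is where the claim actually has content, and that is the part left unverified.
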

In the first and second cases again Corollary \ref{cor_Crit2} applies,  because $\gamma$ contains $\alpha_i$ with multiplicity zero. The third case is more complicated and requires by-hand arguments.\\

The critical cases $B_3,C_3$ in $B_n,C_n$ can again be analyzed with Corollary \ref{cor_Crit2}, because by direct inspection of the root system we only find very few possibilities for $\beta$. The critical case $D_4$ in $D_n$ seems again to be harder.\\

\subsection{Remark on direct computations}

A by-hand method we have explored is the direct computation of character shifts in particularly nice cases: In general, it is hard to relate the coproduct of a reflected element to the coproduct of the original element, but we can obtain enough information under a multiplicity-zero condition. It enters through the following formulae:

\begin{proposition}\label{prop_explicitShift}
 Let $\alpha_m$ be a simple root and consider $T_{v^{-1}}^{-1}(F_i)$ for any $v\in W, i\in I$, which is in degree $\nu:=v(\alpha_i)$ and $\ell(vs_i)=\ell(v)+1$.
 Assume that $(\alpha_m,\nu)=-1$ and that $\alpha_m$ has multiplicity zero in the degree of $v(\alpha_i)$. Let $\phi_m$ be a character on $\Uminus{s_mvs_i}$. Then 
$$(T_{s_m}^{-1}T_{v^{-1}}^{-1}(F_i))_{\phi_m}=
 T_{s_m}^{-1}T_{v^{-1}}^{-1}(F_i)
 +(q_m-q_m^{-1}) \phi_m(F_m)\cdot T_{v^{-1}}^{-1}(F_i)K_m^{-1}$$
 If $(\alpha_m,\nu)=-c$ a similar formula holds. 
 \end{proposition}~
 
\begin{proof}~
 
 To compute the character shift $(T_{s_m}^{-1}T_{v^{-1}}^{-1}(F_i))_{\phi_m}$ we need to relate coproduct and reflection, which is involved. From \cite[Prop. 37.3.2]{Lusz93}  we have the formula\footnote{We use the notation $T_{s_m},T_{s_m}^{-1}$ in \cite{Jan96}, which translates by the remark on p. 146 to Lusztig's notation $T_{m,1}'',T_{m,{-1}}'$.} 
 \begin{align*}
 &(T_{s_m}^{-1}\otimes T_{s_m}^{-1})\Delta(T_{s_m}(x))\\
 &=\underbrace{\left(\sum_{l=0}^\infty q_m^{l(l-1)/2}\{l\}_{q_m} F_m^{(l)}\otimes E_m^{(l)}\right)}_{L'}
 \Delta(x)
 \underbrace{\left(\sum_{r=0}^\infty (-1)^r q_m^{-r(r-1)/2}\{r\}_{q_m} F_m^{(r)}\otimes E_m^{(r)}\right)}_{L''} 
 \end{align*}
 where $\{l\}_{q^m}=[l]_{q_m}(q_m-q_m^{-1})^l$. We need an analogous formula for $T_{s_m}^{-1}$, by substituting $x=T_{s_m}^{-1}(y)$ and using $L'L''=1$ from \cite[ Prop. 5.3.2]{Lusz93}
 \begin{align*}
 &\Delta(T_{s_m}^{-1}(y))\\
 &=\left(\sum_{r=0}^\infty (-1)^r q_m^{-r(r-1)/2}\{r\}_{q_m} F_m^{(r)}\otimes E_m^{(r)}\right) 
 (T_{s_m}^{-1}\otimes T_{s_m}^{-1})\Delta(y)
 \left(\sum_{l=0}^\infty q_m^{l(l-1)/2}\{l\}_{q_m} F_m^{(l)}\otimes E_m^{(l)}\right)
 \end{align*}
 If we apply the character $\phi_m\otimes 1$, then the only nonzero contributions are those tensor summands of $(T_{s_m}^{-1}\otimes T_{s_m}^{-1})\Delta(y)$ and hence $\Delta(y)$ where the left tensor factor has degree in $\Z\alpha_m$. \\
 
 We now use our assumption that $y:=T_{v^{-1}}^{-1}(F_i)$ in degree $v(\alpha_i)$ does not contain a factor $F_m$. Thus in this case the only nonzero contribution comes from the summand $1\otimes T_{s_m}^{-1}T_{v^{-1}}^{-1}(F_i)$ in $(T_{s_m}^{-1}\otimes T_{s_m}^{-1})\Delta(y)$, thus in this case
 \begin{align*}
 &(\phi_m\otimes 1)\Delta(T_{s_m}^{-1}(y))\\
 &=
 \sum_{r,l=0}^\infty
 (-1)^r q_m^{-r(r-1)/2}\{r\}_{q_m}
 q_m^{l(l-1)/2}\{l\}_{q_m}\cdot 
 \phi_m(F_m^{(r)}F_m^{(l)})\cdot 
 E_m^{(r)}\left(T_{s_m}^{-1}T_{v^{-1}}^{-1}(F_i)\right)E_m^{(l)}\\
%
 &=
 \sum_{s=0}^\infty
 q_m^{s(s-1)/2}(q_m-q_m^{-1})^s 
 \phi_m(F_m)^s
 \sum_{r=0}^s
 (-1)^r q_m^{(1-s)r}\cdot 
 E_m^{(r)}\left(T_{s_m}^{-1}T_{v^{-1}}^{-1}(F_i)\right)E_m^{(s-r)}\\
 \end{align*}
 By reasons of degree (or by the quantum Serre relation), all terms $s\neq 0,1$ vanish for $(\alpha_m,\nu)=-1$. If $(\alpha_m,\nu)=-c$, then similarly only the terms up to $s=c$ are nonzero. \\
 
 The last equality can then be most easily calculated by hand, since by the assumption on the multiplicity of $\alpha_m$, we have that $T_{-1}^{-1}(F_i)$ is a linear combination of terms $F'F_kF''$ where $F',F''$ are products of $F_l$ with $(\alpha_m,\alpha_l)=0$. Then by the multiplicativity of $T_{s_m}^{-1}$ and the defining formulae \cite[8.14 (8')]{Jan96}  we have 
 $$T_{s_m}^{-1}T_{v^{-1}}^{-1}(F_i)=T_{s_m}^{-1}(F'F_kF'')=F'(T_{s_m}^{-1}(F_k))F''=F'[F_m,F_k]_{q_m}F''$$
 and similarly by $[E_m,F_l]=[K_m,F_l]=0$ we get further
 $$[E_m,(T_{s_m}^{-1}T_{v^{-1}}^{-1}(F_i)]
 =F'[E_m,[F_m,F_k]_{q_m}]F''
 =F'F_kK_m^{-1}F''=F'F_kF''K_m^{-1}$$
 This proves the assertion. 
\end{proof}

\begin{lemma}[Criterion 3]\label{lm_crit3}
 Let $w=us_i$ with $\ell(w)=\ell(u)+1$ be a Weyl group element and $u=s_m s_n u'$ with $\ell(u)=\ell(u')+2$ and a character $\phi$ with support containing $\{\alpha_m,\alpha_n\}$. 
 Assume that the simple root $\alpha_m$ appears in $u(\alpha_i)-\alpha_m\in\Phi$ with multiplicity zero. Then Conjecture A holds for $w$, if it holds for the shorter elements $u,s_nu,s_mu$.
 \end{lemma}
\begin{proof}
 We want to explicitly construct an element in $\Uminus{s_ms_nu's_i}$ which has a character shift with leading term $T_{u'^{-1}}^{-1}(F_i)$ as asserted, using the inductive assumption on $u=s_ms_nu'$ as well as on $s_n u' s_i$ and on $s_mu's_i$ for smaller support $\{\alpha_n\}$ resp. $\{\alpha_m\}$. For the third inductive assumption we require the stronger explicit assertion from Proposition \ref{prop_explicitShift} for $v=u'$ that relies on the assumption of multiplicity one:
 $$(T_{s_m}^{-1}T_{u'^{-1}}^{-1}(F_i))_{\phi_m}=
 T_{s_m}^{-1}T_{u'^{-1}}^{-1}(F_i)
 +(q_m-q_m^{-1}) \phi_m(F_m)\cdot T_{u'^{-1}}^{-1}(F_i)K_m^{-1}$$
 Now the proof proceeds as follows:
 By the inductive assumption on $s_n u' s_i$ there exists an $X\in \Uminus{s_n u' s_i}$ such that we have ${X^{\phi_n}}={T_{u'^{-1}}^{-1}(F_i)}$ in $\gr \Uminus{s_n u' s_i}_{\phi_n}$, or differently spoken the character shift has leading term 
 $$X^{\phi_n}=T_{u'^{-1}}^{-1}(F_i)+\cdots$$
 We may assume $X$ to be chosen in degrees $u'(\alpha_i)+\Z\alpha_n$, because $\Uminus{s_n u' s_i}_{\phi_n}$ is still graded by $\Z\alpha_n$-cosets. \newline 
 We now apply $T_{s_m}^{-1}$, which commutes with the $\phi_n$ character shift, so we have constructed an element $T_{s_m}^{-1}(X)\in \Uminus{s_ms_n u' s_i}$ with leading term $$(T_{s_m}^{-1}X)^{\phi_n}=T_{s_m}^{-1}T_{u'^{-1}}^{-1}(F_i)+\cdots$$
 because the only degree, which is larger then $u'(\alpha_i)$ and becomes smaller after reflection, is $s_mu'(\alpha_i)$, which is not in $u'(\alpha_i)+\Z\alpha_n$. \newline
 We now apply a $\phi_m$ character shift to both sides of the formula, giving 
 $$(T_{s_m}^{-1}X)^{\phi_m,\phi_n}=\left(T_{s_m}^{-1}T_{u'^{-1}}^{-1}(F_i)\right)_{\phi_m}+\cdots$$
 By Proposition \ref{prop_explicitShift} we know the right-hand side shift explicitly, so
 $$(T_{s_m}^{-1}X)^{\phi_m,\phi_n}=(q_m-q_m^{-1}) \phi_m(F_m)\cdot T_{u'^{-1}}^{-1}(F_i)K_m^{-1}+\cdots$$
 which concludes the proof.\\
 \end{proof}
 
 We remark that without multiplicity zero we cannot control on which expression we have to apply the inductive assumption in order to get precisely the root vector in the last line. For example, the character shift could be zero or (since we apply $T_m^{-1}$) the initial element could even have to be chosen to be outside $U^-$. \\
 
 We remark further, that in terms of degrees, the following is hidden in the inductive assumptions of the proof, compare Example \ref{exm_A3}: The new PBW generator in $\Uminus{s_nu's_i}$ is $T^{-1}_{s_n}T^{-1}_{u'^{-1}}(F_i)$, which is in degree $s_nu'(\alpha_i)=\beta$, while the new degree in the graded algebra is $u'(\alpha_i)=\beta+\alpha_n$. This is only possible, because (apparently, by induction) some element $Y$ in degree $\beta+\alpha_n=u'(\alpha_i)=s_nu'_1(\alpha_l)$ is already present in $\Uminus{s_nu'}$, with character shift $Y^{\phi_n}=Y+T_{s_n}^{-1}T^{-1}_{u'^{-1}}(F_i)+\cdots$. Together with $T^{-1}_{s_n}T^{-1}_{u'^{-1}}(F_i)$ this makes possible the linear combination $X=Y-T^{-1}_{s_n}T^{-1}_{u'^{-1}}(F_i)$ in mixed degrees, where the leading terms in the character shifts cancel and the next leading term is in degree $u'(\alpha_i)=\beta+\alpha_n$.

\section{The graded algebra determining the representation theory}

\subsection{Conjecture B}

Our ultimate hope is that the description of the graded algebra in Conjecture A and Corollary \ref{cor_conjA}, which we have proven in many cases, 
can determine for a triangular right coideal subalgebras of the form in  Corollary \ref{cor_triangular}, for which combinations of data this gives actually a basic and a Borel subalgebra:

\begin{conjectureB}\label{conj_B}
 A right coideal subalgebra $C$ of the form given in Corollary \ref{cor_triangular}
 $$\Uminus{w_-}_{\phi_-}\acf[L]S(\Uplus{w_+})_{\phi_+}$$ 
 is a Borel subalgebra iff $w_+w_-'^{-1}=w_0$ with $\ell(w_+)+\ell(w_-'^{-1})=\ell(w_0)$, where as in Conjecture A 
 $$w_-'=(\prod_{\beta\in\supp(\phi)} s_\beta)w_-$$
\end{conjectureB}
Note that then the localization of the graded algebra is as in Corollary \ref{cor_conjA}
 $$\Uminus{w_-'}_{\phi_-}\acf[\tilde{G}]S(\Uplus{w_+})_{\phi_+}$$ 
\begin{example}
 In the homogeneous case $\phi_+=\phi_-=\varepsilon$ where $w_-'=w_-$ the condition is clear: If $\ell(w_+)+\ell(w_-'^{-1})>\ell(w_+w_-'^{-1})$ then there are common roots $\alpha,-\alpha$ in $\Phi^+(w_+),\Phi^-(w_-)$, producing a full quantum subgroups $U_q(\sl_2)$, which is surely not basic. If $\ell(w_+w_-'^{-1})<\ell(w_0)$ then there are other basic algebras with larger $w_+,w_-$.
\end{example}
 

The conjecture has essentially two parts, which are both
representation-theoretic: 
\begin{itemize}
\item Proving that $\ell(w_-'^{-1}w_+)<\ell(w_-')+\ell(w_+)$
implies non-basic 
\item Proving that $\ell(w_-'^{-1}w_+)=\ell(w_-')+\ell(w_+)$ implies basic. 
\end{itemize}
From these two it would then follow that Conjecture~B characterizes basic right coideal
subalgebras that are
at least maximal among all triangular basic right coideal subalgebras $C$ with $C\cap U^0$ a Hopf algebra.

We now prove the first direction in general using Conjecture A, in all cases where Conjecture A is proven.

The second part we can so far only prove in special cases like $A_1,A_2$ below
or $\supp(\phi_+)=\supp(\phi_-)=\Phi^+(w_+)\cap\Phi(w_-)$ by explicitly constructing the algebra. It would be desirable to find a proof using again the graded algebra and 
Conjecture~A.

\subsection{Proof of Conjecture B in one direction}

We will now prove Conjecture~B in one direction, by explicitly constructing irreducible 
higher-dimensional representations as composition factors of restricted
$U$-representations. For example, this result allows us to prove that
certain basic right coideal subalgebras we constructed \cite{LV17} are maximal at least
 among the \emph{triangular} basic algebras $C$ with $C\cap U^0$ a Hopf algebra.

\begin{lemma}\label{Annichtede}
 For $w_-\in W$ assume Conjecture A holds and let $w_-'$ be accordingly. Let $w_+\in W$. Let $C$ be a right coideal subalgebra of the form in Corollary \ref{cor_triangular}
 $$C=\Uminus{w_-}_{\phi^-}\acf[L] S(\Uplus{w_+})_{\phi_+}$$
 with $L\subset (\supp(\phi^+)\cap \supp(\phi^-))^{\bot}$. Then: If $\ell(w_-'^{-1}w_+)<\ell(w_-')+\ell(w_+)$ then $C$ is not basic.
\end{lemma}
\begin{proof}

For an arbitrary root system $\ell(w_-'^{-1}w_+)<\ell(w_-')+\ell(w_+)$ implies $\Phi^+(w_+)\cap\Phi^+(w_-')\neq\emptyset$,
so there is a root $\mu\in\Phi^+(w_+)\cap\Phi^+(w'_-)$. By Conjecture A, this assumption about the graded algebra $\gr(C)$ means that in $C$ there are elements $E,F$ with leading terms with respect to the $\mathbb{Z}$-grading the root vectors 
$E_{\mu}K_{\mu}^{-1}$ and $F_{\mu}$. Note that while $E=E_\mu$ the element $F$ starts usually with a higher root and is not even necessarily a character shifted root vector.\\

Our general strategy to construct irreducible representations of dimension
${>1}$ is as follows: We take some suitable finite-dimensional $U$
representation $V=L(\lambda)$, which we want to restrict to $C$ and decompose
into irreducible composition factors, and we wish to prove not all of them are
one-dimensional. The action of the commutator $[E,F]_1\in C$ on every one-dimensional $C$-module is trivial (because any commutator acts trivially on any $1$-dimensional representation) so if the composition series of $C$ only contains such representations, then $[E,F]_1$ acts nilpotently on $V$. 

But $E$ contains only terms in degree $\leq\mu$ and $F$ only terms in degree $\geq\mu$, so the commutator acts as a lower triangular matrix with diagonal entries $[E_{\mu}K_{\mu}^{-1},F_{\mu}]_1$. So we need to find a $U$ module $V=L(\lambda)$ with non-zero eigenvalues for $[E_{\mu}K_{\mu}^{-1},F_{\mu}]_1$, then this proves the existence of higher-dimensional irreducible composition factors.\\

Claim: For any root $\mu$ there exists a finite-dimensional $U$ representation $V$ on which the commutator $[E_\mu,F_\mu]$ has a non-zero eigenvalue i.e. does not act nilpotently. For $\mu$ simple this is easily seen from the highest-weight vector in $V=L(\lambda)$ for any $(\lambda,\mu)\neq 0$ (and for $A_n$ in general using minuscule weights). We generalize this approach as follows:\\
 

Let $\mu=w(\alpha_i)$ and choose any weight $\lambda$ with $(\alpha_i,\lambda)<0$. Then in the irreducible $U$-module $L(\lambda)$ the weight-spaces $w(\lambda)$ and $w(\lambda-\alpha_i)$ are both one-dimensional with basis $T_w(v_\lambda),T_w(F_{\alpha_i}v_\lambda)$ since they are reflections of one-dimensional weight spaces. Since $E_\mu=T_w E_{\alpha_i}$ and $F_\mu=T_{w^{-1}}^{-1}F_{\alpha_i}$ we can evaluate the commutator $[E_\mu,F_\mu]$ on $v:=T_w v_\lambda$, but things are slightly complicated by $T^{-1}_{w^{-1}}$ on $F$:
\begin{align*}
 [E_\mu,F_\mu]v
 &=(T_w E_{\alpha_i})(T_{w^{-1}}^{-1}F_{\alpha_i})(T_w v_\lambda)
 -(T_{w^{-1}}^{-1}F_{\alpha_i})(T_w E_{\alpha_i})(T_w v_\lambda)\\
 &=T_w E_{\alpha_i}T_w^{-1}T_{w^{-1}}^{-1}F_{\alpha_i}T_{w^{-1}}T_w v_\lambda -0
\end{align*}
where the second term vanishes by construction since $v_\lambda$ is a highest-weight-vector and $(T_w E_{\alpha_i})(T_w v_\lambda)=T_wE_{\alpha_i}v_\lambda=0$. Now since the weight-spaces $w(\lambda)$ and $w(\lambda-\alpha_i)$ are both one-dimensional and the Lusztig-automorphisms are bijective, we have scalars $a,b\neq 0$ with 
$$T_{w^{-1}}T_w v_\lambda=av_\lambda
\qquad
T_w^{-1}T_{w^{-1}}^{-1} (F_{\alpha_i}v_\lambda)= b(F_{\alpha_i}v_\lambda)$$
Since we further have 
$$E_{\alpha_i}F_{\alpha_i}v_\lambda
=[E_{\alpha_i},F_{\alpha_i}]v_\lambda
=\frac{K-K^{-1}}{q-q^{-1}}v_\lambda
=\frac{q^{(\alpha,\lambda)}-q^{-(\alpha,\lambda)}}{q-q^{-1}} v_\lambda
$$
it follows as claimed that $v=T_w v_\lambda$ is a non-zero eigenvector
$$[E_\mu,F_\mu]v=\frac{q^{(\alpha,\lambda)}-q^{-(\alpha,\lambda)}}{q-q^{-1}}\cdot a\cdot b\cdot v_\lambda$$
Thus as discussed above: Since $[E_\mu,F_\mu]$ acts not nilpotently on the
finite-dimensional $U$-module $L(\lambda)$, the restriction of this module
to $C$ containing as leading term $E_\mu,F_\mu$ cannot only have one-dimensional
composition-factors. Thus we found a higher-dimensional irreducible composition
factor and $C$ is not basic.
\end{proof}

\section{Example \texorpdfstring{$A_2$}{A2}}
We have already determined all Borel subalgebras of $U_q(\sl_2)$ in Section \ref{sec_exampleA1}. We conclude our article by treating the next case $U_q(\sl_3)$ explicitly. In particular, we give a completed classification of \emph{all} Borel subalgebras in this case by hand, so that our conjectures and their impact can be checked against a first realistic example.\\

Any triangular right coideal subalgebra  $C$ with $C\cap U^0$ a Hopf algebra is by Corollary \ref{cor_triangular} of the form 
$$C=\Uminus{w_-}_{\phi_-}\acf[L]S(\Uplus{w_+})_{\phi_+}$$
where $L\perp \supp(\phi_+)\cap\supp(\phi_-)$ and 
 the supports of the characters $\phi_+,\phi_-$ consist of mutually
orthogonal roots. We have proven Conjecture A in this case, and the proven direction of Conjecture B states that 
$\ell(w_-'^{-1}w_+)<\ell(w'_-)+\ell(w_+)$ implies that $C$ is not basic, and we have conjectured that the triangular Borel subalgebras are precisely those with $w_-'^{-1}w_+=w_0$. 

From certain considerations we would expect moreover that in general for a trinagular Borel subalgebra $\supp(\phi_+)=\supp(\phi_-)$ and $L=(\supp(\phi_+)\cap\supp(\phi_-))^\perp$.\\

These expectations would lead to the following candidates:
\begin{itemize}
 \item $\supp(\phi_+)=\supp(\phi_-)=\{\}$ i.e. $\phi_+,\phi_-$ trivial. These are the homogeneous Borel subalgebras and we have already discussed them in Section \ref{sec_standardBorel}. They are reflections of $U^-$, explicitly
 $$\Uminus{w_-}U^0S(\Uplus{w_+}),\qquad w_-^{-1}w_+=w_0$$
 \item $\supp(\phi_+)=\supp(\phi_-)=\{\alpha_1\}$, then in particular we must have $\alpha_1\in \Phi^+[w_\pm]$ leaving the three cases $w_\pm=s_1,s_1s_2,s_1s_2s_1$ with $w_\pm'=s_1w_\pm=1,s_2,s_2s_1$. The relation $w_-'^{-1}w_+=w_0$ leaves the following three cases:\\
\begin{center}
 \begin{tabular}{ccc|ccc}
 $w_-$ & $w'_-$& $w_+$ & $\Phi^+(w_-)$ & $\Phi^+(w'_-)$ & $\Phi^+(w_+)$ \\ 
 \hline
 $s_1$ & $1$ & $s_1s_2s_1$ & $\{\alpha_1\}$ & $\{\}$ & $\{\alpha_1,\alpha_2,\alpha_{12}\}$ \\
 $s_1s_2$ & $s_2$ & $s_1s_2$ & $\{\alpha_1,\alpha_{12}\}$ & $\{\alpha_2\}$ & $\{\alpha_1,\alpha_{12}\}$ \\
 $s_1s_2s_1$ & $s_2s_1$ & $s_2$ & $\{\alpha_1,\alpha_2,\alpha_{12}\}$ & $\{\alpha_2,\alpha_{12}\}$ & $\{\alpha_1\}$ \\
 \hline\\
 \end{tabular}
 \end{center}
\item $\supp(\phi_+)=\supp(\phi_-)=\{\alpha_2\}$ leaves only cases that are isomorphic to the former ones by diagram automorphism $\alpha_1\leftrightarrow\alpha_2$.
 \item $\supp(\phi_+)=\supp(\phi_-)=\{\alpha_{12}\}$ leaves the three cases
		$w_\pm=s_2s_1,s_1s_2,s_1s_2s_1$ with $w_\pm'=s_{12}w_\pm=s_2,s_1,1$. The
		relation $w_-'^{-1}w_+=w_0$ leaves up to diagram automorphism the case
		$w_-=s_1s_2,w'_-=s_1,w_+=s_2s_1$.
\end{itemize}
%

All candidates above except the homogeneous Borel subalgebras and except the second column $w_-=s_1s_2,\, w_-'=s_2,\,w_+=s_1s_2$, which we treat afterward in Section \ref{typ3insl3}, are of the simplest type of a possible Borel subalgebra, which we have studied in \cite[Sec. 3]{LV17}: We say that a triangular right coideal subalgebra $C$ of the form in Corollary~\ref{cor_triangular} has \emph{full support} iff
\begin{align}\label{formula_fullsupport}
    \Phi^+(w_+)\cap\Phi^+(w_-)=\supp(\phi_+)=\supp(\phi_-)
\end{align}
In this case, $C$ is essentially a products of commuting quantum Weyl algebras, filled up as much as possible with the standard Borel subalgebra. In \cite[Thm. 3.25]{LV17} (depending on a conjecture which we prove for type $A_n$) we prove that every Borel subalgebra of this type is a reflections of the case $w_+=w_0,\,w_-=\prod_{\beta\in \supp(\phi_-)}s_\beta$ (note that reflections of coideal subalgebras do not necessarily produce coideal subalgebras). Conversely, our main result in \cite[Thm. 3.3]{LV17} states that $C$ is indeed in general a Borel subalgebra. We now discuss this Borel subalgebra explicitly:

\subsection{The Borel subalgebra \texorpdfstring{$\Uminus{s_1}_{\phi_-}\acf[(K_1K_2^2)^{\pm 1}] S(\Uplus{s_1s_2s_1})_{\phi_+}$}{w=s1,s1s2s1}}

This right coideal subalgebra $C$ with full support \eqref{formula_fullsupport} is generated as an algebra by 
$$F^\phi_1=F_{1}+\lambda' K_{1}^{-1},
\qquad (K_1K_2^2)^{\pm 1},
\qquad E^\phi_1=E_{{1}}K_{{1}}^{-1}+\lambda K_{1}^{-1} 
\qquad E_{2}K_{2}^{-1}$$ 
where $\phi^+(E_{\alpha_1}K_{\alpha_1}^{-1})=\lambda$ and $\phi^-(F_{\alpha_1})=\lambda'$ and $0$ else, such that $\lambda\lambda'=\frac{q^2}{(1-q^2)(q-q^{-1})}$.\\
\begin{figure}[h]
 \includegraphics[scale=.2]{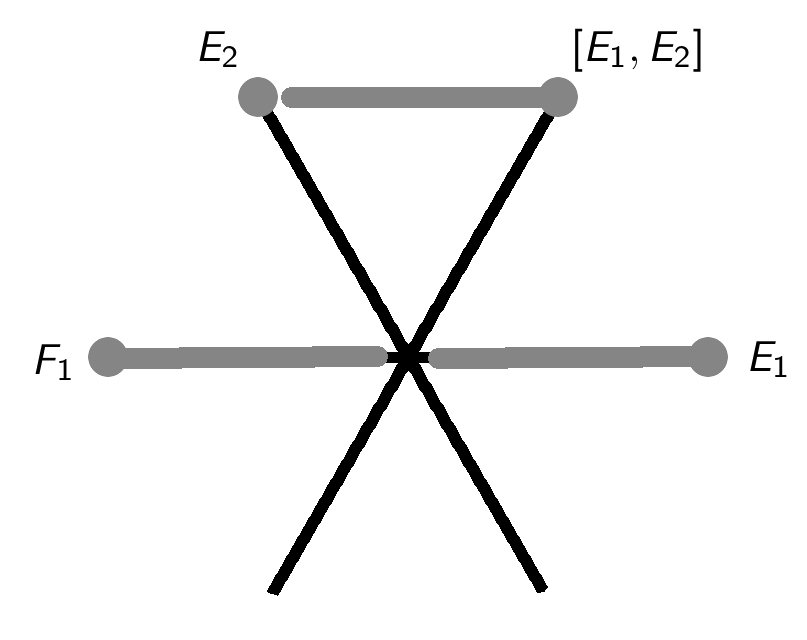}
 \caption{Picture of $\Phi^+(w_\pm)$ with gray lines indicating character shifts. The contained Weyl algebra is  visible on the $X$-axis.}
\end{figure}

In the $\Z$-grading, these elements have degrees $0,0,\alpha_1,\alpha_2$ and the
graded algebra is in accordance with the proven Conjecture A
$\gr(C)=\acf[(K_1K_2^2)^{\pm 1},K_1^{-1}] S(\Uplus{s_2s_1}$. 
The right coideal
subalgebra $C$ contains a Weyl algebra generated by $F^\phi_1,E^\phi_1$ from type $A_1\subset A_2$, and this injection splits via the algebra map sending $E_2K_2^{-1}\mapsto 0$. \\

We now prove that this algebra is basic: A direct calculation (which holds in great generality, see \cite[Lm. 5.13]{LV17}) shows that for any element $X_\mu\in S(\Uplus{s_1s_2s_1})_{\phi_+}$ in degree $\mu$ holds 
$$[F^\phi_1,X_\mu]_{q^{(\alpha_1,\mu)}}=0$$
Thus for any finite-dimensional representation $V$ we can consider the subspace where $E_{2}K_{2}^{-1}$ acts by zero and this is again a $C$-representation. Hence for irreducible $V$ the element $E_{2}K_{2}^{-1}$ acts by zero and the action of our algebra factorizes through the Weyl algebra in Example \ref{Weyl algebra}.\\

This algebra is the smallest example of a large family of basic triangular right coideal subalgebras that we constructed in \cite{LV17} for type $A_n$. 

\subsection{The Borel subalgebra \texorpdfstring{$\Uminus{s_1s_2}_{\phi_-}\acf[(K_1K_2^2)^{\pm 1}] S(\Uplus{s_1s_2})_{\phi_+}$}{w=s1s2,s1s2}}\label{typ3insl3}

This more complicated right coideal subalgebra $C$, which does not have full support \eqref{formula_fullsupport}, is generated as an algebra by 
\begin{align*}
E^\phi_{1}&:=E_{1}K_{1}^{-1}+\lambda K_{1}^{-1}\\
F^\phi_{1}&:=F_{1}+\lambda' K_{1}^{-1}\\
K^{\pm 1}&:=(K_1K_2^2)^{\pm1}\\
E^\phi_{12}&:=E_{12}(K_1K_2)^{-1}+(1-q^{-2})\lambda E_{2}(K_1K_2)^{-1}\\
F^\phi_{12}&:=F_{12}+(q^{-1}-q)\lambda' F_{2}K_{1}^{-1}
\end{align*}

\begin{figure}[h]
 \includegraphics[scale=.2]{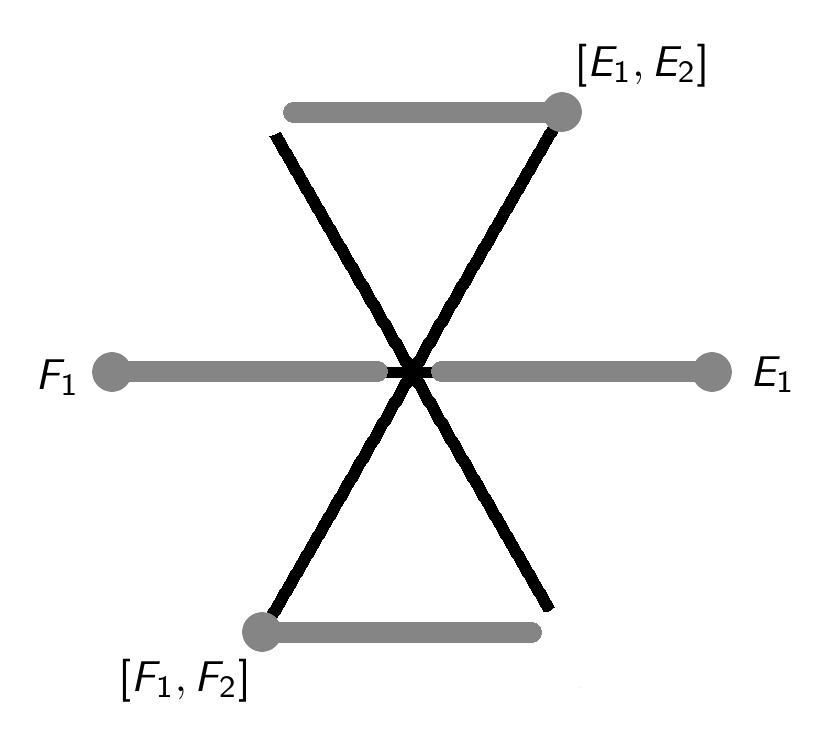}
 \caption{Picture of $\Phi^+(w_\pm)$ with gray lines indicating character shifts. We have two Weyl algebras, one extending another.}
\end{figure}

The graded algebra is in accordance with Conjecture A 
$$\gr(C)=\Uminus{s_2}\acf[(K_1K_2^2)^{\pm 1},K_1^{-1}] S(\Uplus{s_1s_2})$$
Here again the characters are given by 
$\phi^+(E_{\alpha}K_{\alpha}^{-1})=\lambda$ and $\phi^-(F_{\alpha})=\lambda'$ and $0$ else, such that $\lambda\lambda'=\frac{q^2}{(1-q^2)(q-q^{-1})}$. The product of the constants $c_1:=(1-q^{-2})\lambda$ and $c_2:=(q^{-1}-q)\lambda'$ appearing in the relations is 
$$c_1c_2=(1-q^{-2})\lambda(q^{-1}-q)\lambda'= (1-q^{-2})(q^{-1}-q)\frac{q^2}{(1-q^2)(q-q^{-1})}=1$$ 
We calculate the commutator relation 
\begin{align*}
 &[E_{12}K_{12}^{-1}+c_1 E_{2}K_{12}^{-1},F_{12}+c_2 F_{2}K_{1}^{-1}]_{q^2}\\
 &= (F_{1}+\lambda'K_{1}^{-1})(E_{1}K_{1}^{-1}+\lambda K_{1}^{-1})(q^4-q^2)+\frac{q^4}{q-q^{-1}}1
\end{align*}
Thus the commutators of all generators of $C$ are 
\begin{align}
 [K,E^\phi_{1}]_1&=[K,F^\phi_{1}]_1=[K,E^\phi_{12}]_1=[K,F^\phi_{12}]_1=0\\
 [E^\phi_{1},E^\phi_{12}]_q&=[E^\phi_{1},F^\phi_{12}]_q=0\label{E1E12}\\
[F^\phi_{1},E^\phi_{12}]_{q^{-1}}&=[F^\phi_{1},F^\phi_{12}]_{q^{-1}}=0\label{E1F12}\\
[E^\phi_{1},F^\phi_{1}]_{q^2}&=\frac{q^2}{q-q^{-1}}1\label{esuntereinander}\\
[E^\phi_{12},F^\phi_{12}]_{q^2}&=F^\phi_{1}E^\phi_{1}(q^4-q^2)+\frac{q^4}{q-q^{-1}}1\label{E12F12}
\end{align}

We prove that $C$ is basic: The algebra $C$ contains a subalgebra $\langle
E^\phi_1,F^\phi_1\rangle$ isomorphic to the Weyl algebra, which acts by
Example \ref{Weyl algebra} on every irreducible finite-dimensional
representation $V_{e,f}$ via its quotient algebra
$E^\phi_1F^\phi_1=F^\phi_1E^\phi_1=\frac{q^2}{(q-q^{-1})(1-q^2)}$, and
these are all one-dimensional. But the relation reduces the commutator
relation \ref{E12F12} to

\begin{align}
[E^\phi_{12},F^\phi_{12}]_{q^2}&=0 
\qquad \text{on any }V_{e,f} \label{E12F12Null} 
\end{align}

Consider now for any irreducible finite-dimensional $C$-representation $V$.
Since $K$ is central it acts by a scalar. Let $V_{e,f}$ be an irreducible Weyl
algebra representation, which is one-dimensional. By the $q$-commutators
\ref{E1E12} and \ref{E1F12} the generators $E^\phi_{12},F^\phi_{12}$ map
$V_{e,f}$ to some subrepresentation $V_{qe,q^{-1}f}$ resp. $V_{q^{-1}e,qf}$.
Since $ef\neq 0$ and $q$ is not a root of unity, finite dimension proves that
$E^\phi_{12},F^\phi_{12}$ act nilpotently. Let $V_{e',f'}$ be such that
$E^\phi_{12}$ acts by zero, then the additional relation \ref{E12F12Null}
shows that $F^\phi_{12}$ preserves $V_{e',f'}$ as well (and it acts also by
zero). Hence any irreducible finite-dimensional $C$-representation is
one-dimensional.

\subsection{Classification Result}

From our proven direction of Conjecture B it follows that the basic right coideal
subalgebras introduced above are maximal among all \emph{triangular} basic right
coideal subalgebras. To get a more complete result, one has to use similar techniques as for $U_q(\sl_2)$ in Theorem \ref{thm_BorelKlassifikation_sl2}, and the arguments beyond
Conjectures A and B are very ad-hoc. We only quote the result here from the second author's PhD thesis:

\begin{theorem}[\cite{Vocke16} Chapter 10]
 The triangular basic right coideal subalgebras above 
 \begin{align*}
 &\acf[K_{1}^{\pm 1},K_{2}^{\pm 1}] S(\Uplus{w_0})\\
 &\Uminus{s_1}_{\phi_-}\acf[(K_1K_2^2)^{\pm 1}] S(\Uplus{w_0})_{\phi_+}\\
 &\Uminus{s_1s_2}_{\phi_-}\acf[(K_1K_2^2)^{\pm 1}] S(\Uplus{s_1s_2})_{\phi_+}
 \end{align*}
 are Borel subalgebras and these are all Borel subalgebras of $U_q(\sl_3)$ up to reflection and diagram automorphism.
\end{theorem}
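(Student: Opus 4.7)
The plan is to proceed in three stages. First, I would confirm that each of the three listed families is basic and inclusion-maximal. Basicness has already been checked: by Lemma~\ref{lm_homogeneousede} for the standard case, and by the explicit quotient-onto-quantum-Weyl-algebra arguments in the two preceding subsections for the non-homogeneous families. Maximality \emph{among triangular} basic right coideal subalgebras follows directly from the proven direction of Conjecture~B (Lemma~\ref{Annichtede}): any strictly larger triangular right coideal subalgebra would force the existence of an intersection root $\mu\in \Phi^+(w_+)\cap \Phi^+(w'_-)$, giving a non-nilpotent $[E_\mu K_\mu^{-1},F_\mu]$ on a suitable $L(\lambda)$.

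Second, I would classify all triangular basic right coideal subalgebras by running through the finitely many tuples $(w_-,w_+,\phi_-,\phi_+,L)$ with $w_\pm$ in the six-element Weyl group of $A_2$. The character-shift classification of Theorem~\ref{HK11b} together with the algebra condition forces $\supp(\phi_\pm)\subset \Phi^+(w_\pm)$ to be mutually orthogonal simple roots and $L\subset(\supp(\phi_+)\cup \supp(\phi_-))^\perp$, which is a short enumeration. Since Conjecture~A is proven in type $A$, it determines $w'_-$ in each case, and the proven direction of Conjecture~B rules out every candidate with $\ell(w'^{-1}_-w_+)<\ell(w_0)$. After identifying candidates that differ by the diagram automorphism $\alpha_1\leftrightarrow\alpha_2$ or by a reflection $T_w$, exactly the three families of the theorem remain maximal; this is essentially the enumeration summarised in the table immediately preceding the theorem.

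The third and genuinely hard step is excluding non-triangular basic right coideal subalgebras, i.e.\ showing that any basic right coideal subalgebra is contained in one of the three triangular families. Here I would imitate the $U_q(\sl_2)$ strategy of Theorem~\ref{thm_BorelKlassifikation_sl2}: start from a normal form for generators of an arbitrary right coideal subalgebra of $U_q(\sl_3)$ (as worked out in \cite{Vocke16} Ch.~10), and tabulate for each pair of such generators the algebra they generate. The representation-theoretic obstruction is uniform: commutators of a mixed $E,F$-generator with the group-like generators, or of two generators whose leading terms share a root $\mu$, produce either a $K_\mu^{-2}$ or both root vectors $E_\mu,F_\mu$, in either case a copy of $U_q(\sl_2)$ whose restriction of some $L(\lambda)$ has a higher-dimensional composition factor by the eigenvalue argument of Lemma~\ref{Annichtede}. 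The main obstacle is purely combinatorial — the table is far larger than in rank one and several inequivalent normal forms have to be considered — but each row reduces by the same mechanism, so the classification ultimately comes down to verifying a finite but sizeable list.
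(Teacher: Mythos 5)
Your proposal matches the paper's proof strategy, which is itself only a sketch that defers most of the technical work to \cite{Vocke16} Chapter~10: basicness of the three families is verified in the preceding subsections, maximality among triangular basic right coideal subalgebras follows from Lemma~\ref{Annichtede}, and the non-triangular case is handled by the generator normal form of \cite{Vocke16} Theorem~4.11 and Lemma~3.5 together with case checking against $L(\lambda)$, which the paper itself describes as ``very ad-hoc.'' Your three stages mirror this division exactly.

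One small imprecision worth flagging: in your second stage you write that the proven direction of Conjecture~B ``rules out every candidate with $\ell(w'^{-1}_-w_+)<\ell(w_0)$,'' but Lemma~\ref{Annichtede} only excludes, as \emph{non-basic}, the case $\ell(w'^{-1}_-w_+)<\ell(w'_-)+\ell(w_+)$, i.e.\ a genuine overlap $\Phi^+(w'_-)\cap\Phi^+(w_+)\neq\emptyset$. Candidates with $\ell(w'^{-1}_-w_+)=\ell(w'_-)+\ell(w_+)<\ell(w_0)$ are in fact basic; they are discarded only because they are properly contained in a larger basic right coideal subalgebra from your list --- this is the maximality mechanism you already invoke in stage~1, not a consequence of the proven half of Conjecture~B. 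Keeping the two conditions apart matters here, since for the non-maximal candidates you need the positive fact that the larger coideal subalgebra exists and is still basic, which is precisely the content of the two explicit basicness verifications in the preceding subsections.
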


The proof idea for both assertions follows from \cite[Thm. 4.11, Lm. 3.5]{Vocke16}: We have for any right coideal subalgebra in $U$ a generating
system of elements, which have a unique leading term that is a root vector. Then
one has to study combinations of arbitrary elements and test them on (minuscule)
representations of $U_q(\sl_3)$ to rule out that the algebra is basic, or to show that the combination
is contained in a triangular right coideal subalgebra because we know our
examples are the maximal ones among them. 

\subsection{Induction of one-dimensional characters}\label{sec_exampleA2}

We want to finally calculate the induced representations in the two nontrivial Borel subalgebras above in $U_q(\sl_3)$ as in Section \ref{sec_induction}.\\

Let $C=\Uminus{s_1}_{\phi_-}\acf[(K_1K_2^2)^{\pm 1}] S(\Uplus{s_1s_2s_1})_{\phi_+}$.
We have seen that each irreducible representation is of the form
$V_{e_1,f_1,k_{122}}$, which is one-dimensional with action given by scalars $e_1,f_1,k_{122}\in \acf^\times$ with $e_1f_1=\frac{q^2}{(q-q^{-1})(1-q^2)}$
\begin{align*}
F_{1}+\lambda' K_{1}^{-1}
&\mapsto f_1\\
(K_1K_2^2)^{\pm 1} 
&\mapsto k_{112}\\
E_{{1}}K_{{1}}^{-1}+\lambda K_{1}^{-1} 
&\mapsto e_1\\
E_{2}K_{2}^{-1}
&\mapsto 0 
\end{align*}
Since $C$ has a PBW-basis we can calculate the induced module to be
\begin{align*}
U_q(\sl_3) \otimes_C V_{e_1,f_1,k_{122}}
&=\langle F_2^iF_{12}^j K_1^k K_2^\epsilon \mid i,j\in\N,k\in \Z,\epsilon=0,1 \rangle_\acf\\
&=\langle F_2^iF_{12}^j K_2^k \mid i,j\in\N,k\in \Z\rangle_\acf
\end{align*}
This is (up to $K_2^\epsilon$, which depends on the chosen lattices) the right
coideal subalgebra $\tilde{C}=\Uminus{s_2s_1}\acf[K_1]$, acting on itself by left-multiplication, and this action is extended to $U_q(\sl_3)$ acting on $\tilde{C}$. A similar result follows easily whenever $\Phi^+(w_-)\cap\Phi^+(w_+)=\supp$ as we show in \cite{LV17}. \\


Let $C=\Uminus{s_1s_2}_{\phi_-}\acf[(K_1K_2^2)^{\pm 1}] S(\Uplus{s_1s_2})_{\phi_+}$.
We have seen that each irreducible representation is of the form
$V_{e_1,f_1,k_{122}}$, which is one-dimensional with action given by scalars $e_1,f_1,k_{122}\in \acf^\times$ with $e_1f_1=\frac{q^2}{(q-q^{-1})(1-q^2)}$
\begin{align*}
E_{1}K_{1}^{-1}+\lambda K_{1}^{-1}
&\mapsto e_1\\
F_{1}+\lambda' K_{1}^{-1}
&\mapsto f_1\\
(K_1K_2^2)^{\pm1}
&\mapsto k_{122}\\
E_{12}(K_1K_2)^{-1}+(1-q^{-2})\lambda E_{2}(K_1K_2)^{-1}
&\mapsto 0\\
F_{12}+(q^{-1}-q)\lambda' F_{2}K_{1}^{-1}
&\mapsto 0
\end{align*}Since $C$ has a PBW-basis, we can calculate the induced module to be
\begin{align*}
U_q(\sl_3) \otimes_C V_{e_1,f_1,k_{122}}
&=\langle F_2^iE_2^j K_1^k K_2^\epsilon \mid i,j\in\N,k\in \Z,\epsilon=0,1 \rangle_\acf\\
&=\langle F_2^iE_2^j K_2^k \mid i,j\in\N,k\in \Z\rangle_\acf
\end{align*}
This is (up to $K_2^\epsilon$, which depends on the chosen lattices) the Hopf subalgebra $\tilde{C}=U_q(\sl_2)$, acting on itself by left-multiplication, and this action is extended to $U_q(\sl_3)$ acting on $\tilde{C}$.\\

In both cases our results in Section \ref{sec_induction} state that any finite-dimensional $U_q(\sl_3)$ representation appears as quotient of the induced representation for some specific values $(e_1,f_1,k_{122})$. The tool in Lemma \ref{lm_quotientIndA1} shows again that for generic values the induced representations are irreducible.

\section{Outlook}

Our efforts are far from being concluded. We wish to point out from our perspective difficult points that need to be resolved in future research:
\begin{enumerate}
 \item Conjecture A should be proven for all $\g$. One could attempt to treat the remaining critical cases by hand, by computing explicitly the character shifts in non-multiplicity-zero cases of Criterion 3, but this would be very tedious. It would be much more satisfying to have a more systematic proof.
 \item Conjecture B should be proven in the open direction, and the graded algebra $\gr(C)$ together with Conjecture A should be used more systematically. 
 \item Similarly, we would expect that the graded algebra $\gr(C)$ gives much information about the structure of the induced module. In essence, one would want to use the preimage of $\gr(C)^0$ much like the Cartan part for a usual Verma module. One should also try to prove standard facts, for example, that the tops are irreducible modules.
 \item In \cite{LV17} Section 5 we study all triangular basic coideal subalgebras that are maximal among the triangular coideal subalgebras --- conjecturally they are Borel. We find extensions of quantum Weyl algebras by quantum Weyl algebras in a way controlled by families of Weyl group elements, and the first stage we can construct uniformly for $A_n$. Are there completely different types of examples?
 \item Beyond triangular Borel subalgebras, we have observed in small examples (but do not dare to conjecture) that for a possibly non-triangular basic right coideal subalgebras, there always seems to be a larger triangular right coideal subalgebras, which is still basic. This would imply all Borel subalgebras 
 are triangular. If not, then a counter-example would be extremely interesting. Up to now, the only access we have is to use the generator theorem in \cite{Vocke16} for arbitrary coideal subalgebras and then combine possible generators by hand. 
  \item The case of $q$ a root of unity is more difficult and very interesting. For example, the restriction in Example \ref{exm_restriction} is irreducible for $q^2=-1$, hence $B_{\lambda,\lambda'}$ is not basic, similarly for the projective simple module for other $q$ of finite order. It is interesting that these are precisely the negligible modules in the sense of semisimplification, so one might attempt to consider $B_{\lambda,\lambda'}$ still to be basic in such a more complicated sense.
\end{enumerate}
Moreover, the following questions are from our perspective interesting with respect to applications:
\begin{enumerate}
 \item For any Borel subalgebra $B$, do the induced modules produce a good analog of the tensor category $\mathcal{O}_B$? How does the fact that $B$ is not a Hopf subalgebra, but still a coideal subalgebra, influence such a theory? For quantum symmetric pairs, there have been interesting results in \cite{Kolb19}.
 \item The induced modules appear for $\sl_2$ in \cite{Schm96,Tesch01} in the context of non-compact quantum group. For $\g$, are there corresponding families of modules with these additional analytic data? (for example, for $\sl_2$ replacing $\C[K,K^{-1}]$ by functions on the torus).  On the other hand, is our construction related to the non-standard Borel subalgebras of affine Lie algebras \cite{Fut94, Cox94} via Kazhdan-Lusztig correspondence?
 \item Our non-standard Borel subalgebras suggest non-standard free field realizations in logarithmic conformal field theory \cite[Rem. 2.11]{CLR23}. 
\end{enumerate}

\enlargethispage{1cm}

\end{document}